\documentclass[a4paper, 12pt, oneside]{amsart}
\usepackage{amsrefs}
\usepackage{bm}

\title[Capturing the APS index from the lattice]{Capturing the Atiyah--Patodi--Singer index from the lattice}

\author[S. Aoki]{Shoto Aoki}
\author[H. Fujita]{Hajime Fujita}
\author[H. Fukaya]{Hidenori Fukaya}
\author[M. Furuta]{Mikio Furuta}
\author[S. Matsuo]{Shinichiroh Matsuo}
\author[T. Onogi]{Tetsuya Onogi}
\author[S. Yamaguchi]{Satoshi Yamaguchi}

\address[SA]{%
  Interdisciplinary Theoretical and Mathematical Sciences Program (iTHEMS) \\
  RIKEN \\
  Wako, Japan
}
\email{shotoaoki@g.ecc.u-tokyo.ac.jp}

\address[HFuj]{
Faculty of Science, Japan Women’s University, Mejirodai, Bunkyo-ku, Tokyo 112-8681, Japan
}
\email{fujitah@fc.jwu.ac.jp}

\address[HFuk, TO, and SY]{%
  Department of Physics \\
  Osaka University \\
  Osaka, Japan
}
\email{hfukaya@het.phys.sci.osaka-u.ac.jp}
\email{onogi@phys.sci.osaka-u.ac.jp}
\email{yamaguch@het.phys.sci.osaka-u.ac.jp}

\address[MF]{%
  Graduate School of Mathematical Sciences \\
  The University of Tokyo \\
  Tokyo, Japan
}
\email{furuta@ms.u-tokyo.ac.jp}

\address[SM]{%
  Graduate School of Mathematics \\
  Nagoya University \\
  Nagoya, Japan
}
\email{shinichiroh@math.nagoya-u.ac.jp}

\thanks{Preprint number: OU-HET-1300}

\usepackage[a4paper]{geometry}
\usepackage{microtype}

\usepackage[utf8]{inputenc}
\usepackage[T1]{fontenc}

\usepackage{amsmath}
\usepackage{stmaryrd}  
\usepackage{amscd}
\usepackage{amssymb}
\usepackage{mathtools}
\mathtoolsset{showonlyrefs=true}

\theoremstyle{plain}
\newtheorem{theorem}{Theorem}
\newtheorem{proposition}[theorem]{Proposition}
\newtheorem{lemma}[theorem]{Lemma}
\newtheorem{corollary}[theorem]{Corollary}

\theoremstyle{definition}
\newtheorem{definition}[theorem]{Definition}

\theoremstyle{remark}
\newtheorem{remark}[theorem]{Remark}

\usepackage{hyperref}
\hypersetup{bookmarksnumbered}
\hypersetup{hidelinks}

\newcommand{\R}{\mathbb{R}}
\newcommand{\Z}{\mathbb{Z}}

\newcommand{\cliffordalgebra}{\mathrm{Cl}}
\DeclareMathOperator{\cliffordmultiplication}{\sigma}

\newcommand{\T}{\mathbb{T}}

\DeclareMathOperator{\End}{End}

\newcommand{\lattice}[2][]{\widehat{#2}}

\newcommand{\lT}{\lattice{\T^d}}
\newcommand{\lX}{\lattice{X}}
\newcommand{\lE}{\lattice{E}}
\newcommand{\lkappa}{\lattice{\kappa}}

\DeclareMathOperator{\paralleltransport}{U}

\DeclareMathOperator{\differenceoperator}{\lattice{\nabla}}
\newcommand{\forwarddifferenceoperator}{\differenceoperator^{\mathrm{f}}}
\newcommand{\backwarddifferenceoperator}{\differenceoperator^{\mathrm{b}}}
\DeclareMathOperator{\naiveDiracoperator}{\lattice{D}^{\mathrm{naive}}}

\DeclareMathOperator{\Wilsonterm}{W}

\DeclareMathOperator{\WilsonDiracoperator}{\lattice{D}^{\mathrm{wilson}}}

\DeclareMathOperator{\interpolatorOperator}{\iota}
\newcommand{\interpolator}[1]{\interpolatorOperator_{#1}}

\newcommand{\C}{\mathbb{C}}
\DeclareMathOperator{\id}{id}

\DeclareMathOperator{\APSindex}{Ind_{APS}}
\DeclareMathOperator{\gAPSindex}{Ind_{gAPS}}

\DeclareMathOperator{\Ker}{Ker}
\newcommand{\DiracOperatorOnY}{{D_Y}}
\newcommand{\restrictedTo}[2]{\left. #1 \right|_{#2}}
\newcommand{\DiracOperatorOnCylinderAPS}{\widehat{\mathcal{D}}}
\newcommand{\HH}{{\mathcal H}}

\DeclarePairedDelimiter{\abs}{\lvert}{\rvert}

\DeclarePairedDelimiterX\innerproduct[2]{\langle}{\rangle}{#1,#2}
\DeclarePairedDelimiterX\anticommutator[2]{\lbrace}{\rbrace}{#1,#2}

\DeclareMathOperator{\Image}{Im}
\DeclareMathOperator{\Kernel}{Ker}

\DeclareMathOperator{\cliffordGamma}{\Gamma}
\newcommand{\starHom}{\mathrm{Hom}^*}

\newcommand{\point}{\mathit{pt}}

\newcommand{\Kcocycle}{\mathcal{K}}

\newcommand{\bddorRiesz}{\bullet}
\newcommand{\bounded}{\mathrm{bounded}}
\newcommand{\Riesz}{\mathrm{Riesz}}
\newcommand{\Cayley}{\mathrm{Cayley}}

\DeclareMathOperator{\bddtoRiesz}{\tau^{\bounded}_{\Riesz}}
\DeclareMathOperator{\Riesztobdd}{\tau^{\Riesz}_{\bounded}}
\DeclareMathOperator{\TRtohalf}{T_{\R \to [-1/2,1/2]}}
\DeclareMathOperator{\Tunittohalf}{T_{[-1,1] \to [-1/2,1/2]}}

\newcommand{\boundedoperators}{\mathcal{B}}
\DeclarePairedDelimiter{\operatornorm}{\lVert}{\rVert_{\mathrm{op}}}
\newcommand{\unboundedoperators}{\mathcal{C}}
\newcommand{\boundedselfadjointoperators}{\boundedoperators_{\mathit{sa}}}
\newcommand{\unboundedselfadjointoperators}{\unboundedoperators_{\mathit{sa}}}
\newcommand{\Riesztransform}{T_{\Riesz}}
\newcommand{\unitaryoperators}{\mathcal{U}}
\newcommand{\Cayleytransform}{T_{\Cayley}}

\DeclareMathOperator{\spectrum}{\sigma}

\newcommand{\Hilbertbundles}{\mathrm{Hilbert}}
\DeclarePairedDelimiter{\cardinality}{\lvert}{\rvert}
\newcommand{\modulosettheory}[1]{\operatorname{mod} {\cong_{#1}}}

\newcommand{\matrixalgebra}{\mathrm{Mat}}

\usepackage{color}

\begin{document}

\begin{abstract}
    We construct a formulation of the Atiyah-Patodi-Singer index
    of Dirac operators in lattice gauge theory
    for domains with compact boundaries in a flat torus.
    The key idea is to exploit its equality to the spectral flow of the
    domain-wall fermion Dirac operators, which we generalize in this work
    to cases without product structure near the boundary.
    We prove that, for sufficiently small lattice spacings,
this formulation correctly captures the continuum Atiyah–Patodi–Singer index.
\\
\end{abstract}

\maketitle

\tableofcontents

\section{Introduction}

Lattice gauge theory offers a powerful tool in particle physics
to compute quantum field theories from the first principle.
By coarse-graining the space time into a discrete lattice space,
the functional integral becomes mathematically well-defined
and numerically calculable.
However, a naive discretization often fails to maintain
crucial properties of the target continuum theory.
Topology is one of them.

In particular, the Fredholm index of Dirac operators\footnote{
To be precise, we consider Dirac-type operators, 
which form a broader class than the Dirac operators 
associated with spin structures or spin$^c$ structures.
In our work, we simply refer to Dirac-type operators as Dirac operators.
} 
(we refer to it the Atiyah-Singer(AS) index for comparison with
the Atiyah-Patodi-Singer index)
has been a main challenging subject to formulate in lattice gauge theory.
The most successful approach was given by the overlap Dirac operator
on even-dimensional square lattices \cite{Neuberger:1997fp} (and a similar formulation by
\cite{Hasenfratz:1998ri}),
which realizes the $\mathbb{Z}_2$-grading operator,
or chirality operator in physics \cite{Luscher:1998pqa},
with a slight modification to satisfy the so-called Ginsparg-Wilson
relation \cite{Ginsparg:1981bj}. 
In this approach, the index was defined
by the kernel of the overlap Dirac operator,
which is consistent with the AS index of
the continuum Dirac operator on a torus. 
See \cite{Adams:1998eg} for a mathematical justification
and \cite{Clancy:2023ino} for a general extension of this formulation.

In our previous work \cite{Aoki:2024sjc},
we addressed a different approach on a lattice
to extracting the AS index from massive Dirac operators.
We used a mathematical relation between the AS index of the massless
($\mathbb{Z}_2$-graded) Dirac operator and a family of
massive Dirac operators, which is well-known in $K$-theory as 
the suspension isomorphism $K^0(\mathrm{pt})\cong K^1(I,\partial I)$.
Here $\mathrm{pt}$ denotes a one-point set, 
and $I$ is an interval with its boundary $\partial I$, consisting of the two end points.
We gave a mathematical proof that
at sufficiently small lattice spacings, we can construct a family of 
the massive Wilson Dirac operators on a lattice, 
which can be identified as an element of $K^1(I,\partial I)$ and
its spectral flow\footnote{
The spectral flow approach was  empirically known \cite{Itoh:1987iy}
(even before the overlap fermion was known)
to reproduce the index but the mathematical background
was not rigorously discussed.
} gives the same value of
the AS index in the continuum theory. 
Since the $\mathbb{Z}_2$-grading structure
is lost in the massive Dirac operator from the beginning, 
the Ginsparg-Wilson relation is not required in our formulation
and the standard Wilson Dirac operator is good enough.
We note that there are also different mathematical formulations
\cite{MR4275791,MR4407739} using the Wilson Dirac operator.

In this work, we attempt an important generalization of the  index, the Atiyah--Patodi--Singer (APS) index~\cite{MR397797, Atiyah:1976jg, Atiyah:1976qjr} of Dirac operators on manifolds with boundary.  
The APS index is studied in physics to understanding 
the bulk-boundary correspondence of the fermion anomaly in the symmetry-protected topological phases 
\cite{Witten:2015aba,Yu:2016hcq,Witten:2019bou,Ivanov:2020fsz,Kobayashi:2021jbn,Kanno:2021bze,Onogi:2021slv,Pedersen:2023fpd,Nguyen:2024wck}.
Compared to the AS index on closed manifolds, the APS index is more challenging to realize on the lattice due to the following difficulties:
\begin{itemize}
  \item 
 The APS boundary condition is global and non-local, making its lattice formulation difficult.  In particular, it is not known how to impose the APS boundary condition on lattice Dirac operators.
  \item 
 The APS index is not topological, {\it i.e.}, it depends on the metric and connections of the bundle  near the boundary.  This is in contrast to the AS index, which is a topological invariant.  Therefore, one needs to control the metric and connection dependence. 
\end{itemize}

  The key idea is to exploit a mathematical relation
   between the
  APS index and the spectral flow of the domain-wall fermion Dirac operators \cite{Jackiw:1975fn, Callan:1984sa}
  shown in continuum theory \cite{Fukaya:2017tsq, Fukaya:2019qlf, Fukaya:2020tjk, Fukaya:2021sea}.
  It was originally proved in the case where the neighborhood of the boundary
  has a product metric.
  In this work, we generalize the relation to cases without product structure
  near the boundary, where the index is defined with
  the ``canonical'' boundary operator \cite{Gerd01011992, GILKEY1993129,braverman2019equivariant}.

By the domain-wall fermion formulation, we can overcome the above two difficulties as below.
\begin{itemize}
\item  We can avoid the APS boundary condition by gluing the boundary of
the original manifold we denote by $X_+$ to another manifold $X_-$ sharing the same boundary
to form a closed manifold.
Instead of the boundary condition, we assign opposite signs to the mass term on $X_+$ and $X_-$ so that 
the nontrivial geometrical information is obtained from the $X_+$ subspace only, 
which is proved to be the APS index on $X_+$.
This observation indicates that the APS index on a lattice may be defined by
the spectral flow of the lattice domain-wall fermion Dirac operators \cite{Kaplan:1992bt,Shamir:1993zy}.
Once formulated in terms of the spectral flow, the proof of \cite{Aoki:2024sjc} can be applied almost straightforwardly.
\item We can also avoid the problem that the APS index is not a topological invariant,
by assuming that the target domain-wall fermion Dirac operator is invertible.
With this assumption, we can construct the lattice domain-wall fermion Dirac operator family
as a mathematically well-defined element of $K^1(I,\partial I)$.
\end{itemize}





Here is a summary of this paper.
We describe the relation between the APS index and the spectral flow of the
domain-wall fermion Dirac operators based on
\cite{Fukaya:2017tsq, Fukaya:2019qlf, Fukaya:2020tjk, Fukaya:2021sea}.
In this work, we further generalize
this relation to cases without product structure near the boundary (Theorem~\ref{thm:FFMOYYg}).
We give a formulation of $K$ and $KO$ groups
in such a way that continuum unbounded Dirac operators
and finite lattice bounded Dirac operators
are simultaneously handled.
Although such a formulation was essentially known in the literature
\cite{MR2114489} and discussed in our previous paper \cite{Aoki:2024sjc}
as well, we would like to present it in a selfcontained and comprehensive way
treating the $K$ and $KO$ groups with arbitrary degree
on general pairs of base spaces.
Then we consider a direct sum of continuum and lattice domain-wall fermion
Dirac operators and prove in our main theorem (Theorem~\ref{goal})
that the combined Dirac operator gives a trivial element of $K^1(I,\partial I)$.
In the proof, our finite element interpolator 
between the functions on the lattice and those in the continuum space,
as well as its action on the domain-wall mass term,
plays a key role to ensure that the combined operator is invertible.
The equality between the two spectral flows
immediately follows (Theorem~\ref{goalgoal}).
As far as we know, this is the first mathematically rigorous
formulation of the APS index on a lattice.
This formulation is sufficiently robust to allow extensions to
the systems with additional symmetries.
We explicitly describe an application to the mod-two version
of the APS index when the lattice domain-wall fermion Dirac operator is real (Theorems~\ref{goal-mod2} and \ref{goalgoal-mod2}).
We note that the physics part of this work was
already published in \cite{Aoki:2025gca} 
(see also \cite{Fukaya:2019myi} which gave a perturbative discussion of the spectral flow of the domain-wall fermion Dirac operators), 
where we presented the key formulas with a summary of mathematical descriptions and 
numerical examinations of them on two-dimensional lattices. 

In this work, the base manifold $X=X_-\cup X_+$
is limited to a flat torus.
Accordingly we employ a square lattice as a discretization of $X$.
The readers may ask a question if we can extend our work
to the case where $X$ is a general curved manifold.
Currently we do not have a clear answer but
there is an interesting observation.
In our setup, the domain-wall $Y$ can be any curved submanifold of $X$.
On such a curved domain-wall on a lattice, it was shown that
a nontrivial curvature effect (gravitational background in physics) 
is induced, which is
consistent with the continuum theory \cite{Aoki:2022cwg,Aoki:2022aez,Aoki:2023lqp,Aoki:2024bwx,Clancy:2024bjb,Araki:2025xly}.
Embedding $X$ into a further higher dimensional square lattice
may be an interesting direction, which is, however, beyond the scope of this work.

The rest of the paper is organized as follows.
In Section~\ref{sec:Dirac-operators} we construct
the Wilson Dirac operator on a square lattice
from a given continuum Dirac operator on a flat torus.
In Section~\ref{sec:from-lattice-to-continuum} we define
a finite element interpolator 
between the functions on a lattice and those in continuum space and summarize
its key properties proved in our previous work \cite{Aoki:2024sjc}.
Then in Section~\ref{sec:DWandAPS}, we define the domain-wall fermion Dirac operator 
and review its relation to the APS index.
We also generalize the relation to cases without a product metric near the boundary.
In Section~\ref{sec:K} we present the formulation of $K$ and $KO$ groups
which simultaneously handles bounded and unbounded Dirac operators.
Definitions of the spectral flow and its mod-two version are given in Section~\ref{sec:sf}.
Then in Section~\ref{sec:main-theorem}, we state our main theorems
and give the proof. As a nontrivial example with symmetry,
we also present an application to the mod-two APS index of real Dirac operators in Section~\ref{sec:mod-2APS}.

\section{Wilson Dirac operators on a lattice}
\label{sec:Dirac-operators}

In this section, we will construct the Wilson Dirac operators, commonly used in lattice gauge theory, which we adopt as our discretization of the Dirac operators.

\subsection{Dirac operators in continuum space}

We set up notation.
Let $X := \T^d = (\R/\Z)^d$ be a $d$-dimensional flat torus.
Let $\bm{e}_1, \dots, \bm{e}_d$ be the standard orthonormal basis of $\R^d$.
The tangent bundle $T X$ is canonically isometric to the trivial bundle $X \times \R^d$.
The Clifford algebra $\cliffordalgebra^d$ is generated by $\{e_1, \dots, e_d\}$ subject to the anticommutation relation
\begin{equation}
\{e_i,e_j\}=  e_i e_j + e_j e_i = -2 \delta_{ij}.
\end{equation}
For simplicity, we first assume that $d$ is even.
Let $E \to X$ be a Clifford module bundle on $X$; that is, $E$ is a $\Z_2$-graded Hermitian vector bundle with a smooth map of graded algebra bundles $\cliffordmultiplication \colon X \times \cliffordalgebra^d \to \End(E)$.
We also assume that $\cliffordmultiplication(e_j)^*=-\cliffordmultiplication(e_j)$.
We denote its $\Z_2$-grading operator by $\gamma$.

Fix a Clifford connection $A$ on $E$ 
and we denote the covariant derivative in the $\bm{e}_j$ direction with respect to $A$ by $\nabla_{j}$.
Let $R_X$ be the injective radius of $X$.
For arbitrary two points $x,y \in X$ such that $|x-y|<R_X$, 
there is a unique minimal geodesic from $x$ to $y$. 
We denote the parallel transport by the connection $A$ along this minimal geodesic by
\begin{equation}
  \paralleltransport_{x,y} \colon E_{y} \to E_x,
\end{equation}
which depends smoothly on $x$ and $y$. 
Note that $\paralleltransport_{y,x}=\paralleltransport_{x,y}^{-1}$ 
and $\paralleltransport_{x,x}=\id_{E_x}$ hold\footnote{The set of the parallel transports $\{\paralleltransport_{x,y}\}$ is 
an example of the generalized link variables defined in \cite{Aoki:2024sjc}. }.

Let $\Gamma (E)$ be the space of smooth sections of $E$. 
For smooth sections $u,v \in \Gamma(E)$ and the Hermitian metric $(\cdot, \cdot)$ on $E$, we denote their inner product and the associated norm by 
\begin{equation}
 \langle u, v \rangle:= \int_X (u,v) \;d x, \quad
||v||_{L^2}:= {\langle v,  v \rangle}^{1/2},
\end{equation}
where $d x$ denotes the volume element of $X$ associated with the standard flat metric. 
Let $D : \Gamma (E)\to \Gamma (E)$ be the Dirac operator defined by 
\begin{equation}
  D u:= \sum_{j=1}^d \cliffordmultiplication(e_j) \nabla_{j} u,
\end{equation}
for $u \in \Gamma (E)$, which is a first order formally selfadjoint elliptic operator. 
We also use the $L^2_1$ norm $||  \cdot ||_{L^2_1}$ on $\Gamma(E)$ ; 
\begin{equation}
 || v||_{L^2_1} :=\Big( ||v||^2_{L^2} +  \frac{1}{m_0^2}\sum_{j=1}^d \int_X | \nabla_{j} v |^2 \;d x\Big) ^{1/2}
\end{equation}
for $v\in \Gamma(E)$, where $m_0$ is an arbitrary non-zero real number,
 which is often taken as a typical scale of the physical system we focus on.
We consider the Hilbert space $L^2(E)$ (resp. $L^2_1(E)$) of the completion of $\Gamma(E)$ by the norm $||*||_{L^2}$ (resp. $||*||_{L^2_1}$).
The Dirac operator $D$ can be extended as an unbounded selfadjoint operator (which is denoted by the same letter)
\begin{equation}
  D : L^2(E)\to L^2(E),
\end{equation}
with the domain $L_1^2(E)$.

\subsection{Wilson Dirac operators}
\label{sec:Wilson}

Let $N$ be a positive integer, and set $a := 1/N$.
Let $\lX_a := (a\Z/\Z)^d \subset X$ be the standard lattice: 
we take, for simplicity, a hypercubic lattice whose size and lattice spacing are
equal in every direction\footnote{
  It would not be difficult to consider anisotropic shape of the lattice.
  but we do not discuss that in this work.}.
Set $\lE_a := \left. E \right|_{\lX_a}$.
We simply write $\lX$ and $\lE$ when $a$ dependence is not important.
For each lattice point $z \in \lX$ and $j \in \{1,\dots,d\}$, 
we define the {\it link variables}
\begin{equation}
  \paralleltransport_{z,z+a\bm{e}_j} \colon \lE_{z+a\bm{e}_j} \to \lE_z,
\end{equation}
which is the restriction of the parallel transport $\paralleltransport_{x,y}$ onto the lattice $\lX$.
We define the forward difference operator $\forwarddifferenceoperator_j \colon \Gamma(\lE) \to \Gamma(\lE)$ by
\begin{equation}
  \big( \forwarddifferenceoperator_j u \big)(z) := \frac{\paralleltransport_{z,z+a\bm{e}_j} \big( u (z+a\bm{e}_j) \big) - u(z)}{a}
\end{equation}
for $u \in \Gamma(\lE)$ and $z \in \lX$.
We also define the backward difference operator $\backwarddifferenceoperator_j \colon \Gamma(\lE) \to \Gamma(\lE)$ by
\begin{equation}
  \big( \backwarddifferenceoperator_j u \big)(z) := \frac{u(z) - \paralleltransport_{z,z-a\bm{e}_j} \big( u (z-a\bm{e}_j) \big)}{a}
\end{equation}
for $u \in \Gamma(\lE)$ and $z \in \lX$.
Note that $\backwarddifferenceoperator_j u = - \big( \forwarddifferenceoperator_j u \big)^*$.
We then define a covariant difference operator $\differenceoperator_j \colon \Gamma(\lE) \to \Gamma(\lE)$ by $\differenceoperator_j := \big( \forwarddifferenceoperator_j + \backwarddifferenceoperator_j \big) / 2$.
Note that $\differenceoperator_j$ is skew adjoint.
Now we define the \emph{naive} Dirac operator $\naiveDiracoperator \colon \Gamma(\lE) \to \Gamma(\lE)$ by
\begin{equation}
  \naiveDiracoperator u := \sum_{j=1}^{d} \cliffordmultiplication(e_j) \differenceoperator_j u
\end{equation}
for $u \in \Gamma(\lE).$
Note that $\naiveDiracoperator$ is selfadjoint.

A fundamental lesson of lattice gauge theory is that the naive Dirac operator, though it may appear to be a natural discretization, 
is inadequate, since a property which corresponds to the elliptic estimate in the continuum theory is lost\footnote{In physics, the problem is known as fermion doubling since the naive Dirac operator develops multiple zero points in the momentum space. Due to these unphysical doubler zeros, 
the naive Dirac operator does not recover the elliptic estimate even in the continuum limit $a\to 0$.}.
One should instead employ, for instance, the Wilson Dirac operator, which we define below.
Recall that $a = 1/N$ is our lattice spacing and that $\gamma$ is the $\Z_2$-grading operator of $E$.

\begin{definition}
  The Wilson term $\Wilsonterm \colon \Gamma(\lE) \to \Gamma(\lE)$ is defined by
  \begin{equation}
    \Wilsonterm u := \frac{a}{2} \sum_{j=1}^d \forwarddifferenceoperator_j \big(\forwarddifferenceoperator_j\big)^* u
  \end{equation}
  for $u \in \Gamma(\lE).$
\end{definition}

\begin{definition}
  We define the Wilson Dirac operator $\WilsonDiracoperator \colon \Gamma(\lE) \to \Gamma(\lE)$ by $\WilsonDiracoperator := \naiveDiracoperator + \gamma \Wilsonterm$.
\end{definition}

Note that $W$ is selfadjoint and so is $\WilsonDiracoperator$.

There is a crucial property of the Wilson Dirac operator
which corresponds to the elliptic estimate in the continuum theory.
In \cite{Aoki:2024sjc}[Theorem 4.7], it was shown that there exist two positive 
$a$-independent constants $a_2>0$ and $C>0$ such that
the following inequality uniformly holds for any finite lattice spacing satisfying $0<a\le a_2$
and arbitrary $\phi \in \Gamma(\lattice[a]{E})$.
\begin{equation}
 \sum_{j=1}^d|| \forwarddifferenceoperator_j \phi||^2 \leq 2 || \WilsonDiracoperator_a \phi ||^2 +  C  || \phi||^2,
\end{equation}
where we have put the subscript $a$ to the Wilson Dirac operator to remind that it depends on the lattice spacing $a$.

Finally let us introduce the lattice versions of $L^2$ and $L^2_1$ norms ; 
\begin{align}
 ||v_a ||_{L^2} &:=  \Big( 
  a^d\sum_{ z \in \lattice[a]{X}} |v_a(z)|^2 \Big)^{1/2},\\
 ||v_a||_{L^2_1} &:= \left[||v_a ||^2_{L^2} + 
  \frac{a^d}{m_0^2}\sum_{ z \in \lattice[a]{X}} \sum_{i=1}^d |(\forwarddifferenceoperator_j v_a)(z)|^2\right]^{1/2}
\end{align}
for $v_a \in \Gamma(\lattice[a]{E})$. 
Since $\Gamma(\lattice[a]{E})$ is finite dimensional for fixed $a$, 
the completions with respect to the norms $||\cdot ||_{L^2}$ 
and $||\cdot ||_{L^2_1}$ coincide with $\Gamma(\lattice[a]{E})$ itself. 
But in the continuum limit $a\to 0$,
the two converge to the different continuum counterparts.

\section{From lattice to continuum}
\label{sec:from-lattice-to-continuum}

In this section, we recall a ``finite element method''
interpolating the lattice theory with the continuum theory,
which was discussed in our previous paper~\cite{Aoki:2024sjc}.

\subsection{Finite element interpolator}

We first define an operator
\begin{equation}
  \interpolator{a} \colon \Gamma(\lattice[a]{E}) \to \Gamma(E),
\end{equation}
which interpolates functions on the lattice and those on the continuum torus.
Using the translational symmetry of the metric, the difference of the coordinates
like $x-z$ is well-defined.

We first define a periodic function $\rho_a^{(1)}:\R \to \R$ with period $1$ (see Figure~\ref{fig:rho_a})
by $\rho_a^{(1)}(t):= \frac{1}{a}\max \{ 0, 1- t/a, 1-(1-t)/a\}$,
which induce functions $ \bar\rho_a:S^1\to \R$ and  $\rho_a:X\to \R$, 
\begin{equation}
\rho_a(x):=\prod_{i=1}^d \bar\rho_a(x_i) \quad  (x=(x_1, \ldots, x_d)\in X). 
\end{equation}
We will use this $\rho_a(x)$ as a cut-off function, which 
have the following properties.

\begin{figure*}[tbh]
  \centering
  \includegraphics[width=10cm]{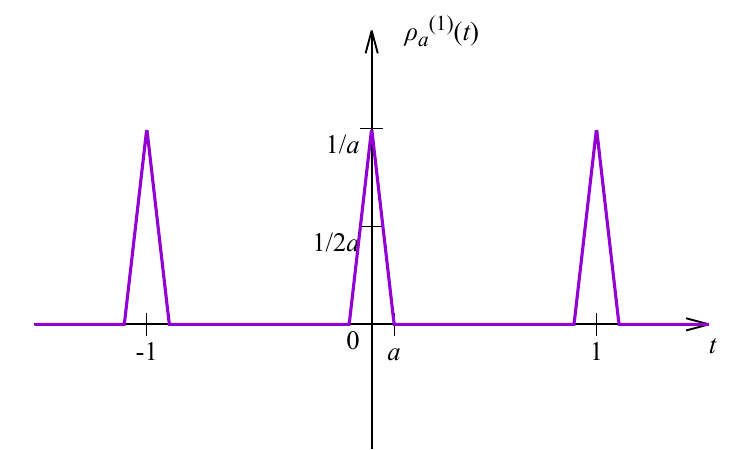}
  \caption{
    The function $\rho_a^{(1)}(t)$. 
      The function $\bar{\rho}_a(t)$ is its restriction to $t\in [0,1]$ where the two end points are identified.
  }
  \label{fig:rho_a}
\end{figure*}

\begin{enumerate}
 \item For arbitrary $x \in X$, we have 
\begin{equation}
 a^d\sum_{z\in \lattice[a]{X}
} \rho_a(x-z) =1.
\end{equation}
and for arbitrary $z \in \lattice[a]{X}$, we have 
\begin{equation}
 \int_{x \in X
} \rho_a(x-z) \;d x=1,
\end{equation}
where $d x$ is the volume form on $X$.
\item For $B=\{0 \} \cup \{ \pm \bm{e}_k \mid 1 \leq k \leq d\}$, we have 
\begin{equation}
 a^d\sum_{e \in  B } \int_{x  \in X } \rho_a(x) \rho_a(x-ae )\;d x  =1.
\end{equation}
\end{enumerate}

Using the above cut-off function, 
we define the map, the finite element interpolator,  $\interpolator{a} : \Gamma(\lattice[a]{E}) \to \Gamma(E)$ by,
\begin{equation}\label{eq:from-lat-to-cont}
 (\interpolator{a}\phi)(x) := a^d\sum_{z \in \lattice[a]{X}} \rho_a(x-z)\paralleltransport_{x,z} \phi(z) \quad (\phi\in  \Gamma(\lattice[a]{E}), \  x\in X)
\end{equation}
and its adjoint  $\iota_a^* :  \Gamma(E) \to \Gamma(\lattice[a]{E})$ by
\begin{equation}
 (\interpolator{a}^*\psi)(z):=\int_{x \in X} \rho_a(z-x) \paralleltransport_{x,z}^{-1}\psi(x)\;d x \quad (\psi\in \Gamma(E) , \ z\in \lattice[a]{X}).
\end{equation}
For later convenience, we also define
\begin{equation}
 \interpolator{a}|\phi(x)| := a^d\sum_{z \in \lattice[a]{X}} \rho_a(x-z)|\phi(z)|
\end{equation}
for $\phi\in \Gamma(\lattice[a]{E})$ and $x\in X$.

Note that $\rho_a(z-x)$ is nonzero only when $x$ is inside a unit hypercube
of the lattice where $z$ is one of the vertices.

\subsection{Properties}
\label{sec:properties}

We next review the key properties of the finite element interpolator $\interpolator{a}$.
One can see that neither $\interpolator{a} \interpolator{a}^*$ nor 
$\interpolator{a}^*\interpolator{a}$ is the identity map. 
This phenomenon is due to the nonlocal nature of these operators. 
Note, however, that $\rho_a(x-z)$ is nonzero only when $|x_i-z_i|<a$ for all $i$.
In the continuum limit $a\to 0$, they behave like the identity, 
which is guaranteed by the following properties proved in \cite{Aoki:2024sjc}.

\begin{enumerate}
 \item $\interpolator{a}$ and $\interpolator{a}^*$ have the same finite operator norm $||\interpolator{a}||=||\interpolator{a}^*||$.
 \item{}[Proposition 4.3 of \cite{Aoki:2024sjc}]\label{property:boundness} 
For $k=0,1$, the operator norm of 
$\interpolator{a} :L^2_k(\lE) \to L^2_k(E)$ is uniformly bounded with respect to $a$.
Likewise, the operator norm of 
$\interpolator{a}^* :L^2_k(E) \to L^2_k(\lE)$ is uniformly bounded with respect to $a$. 
Here we set $L_0^2(\cdot)=L^2(\cdot)$. 
\item{}[Proposition 4.4 of \cite{Aoki:2024sjc}] \label{property:iota*iota}
There exists $C>0$ which is independent of $a$ such that
for any $a$ and $\phi \in L^2(\lE)$
$$
|| \interpolator{a}^* \interpolator{a} \phi -\phi ||_{L^2}^2 \leq C a || \phi ||_{L^2_1}^2.
$$
\item{}[Proposition 4.5 of \cite{Aoki:2024sjc}] \label{property:iotaiota*}
For arbitrary $\psi \in \Gamma(E)\subset L^2(E)$, in the limit $a \to 0$,
 $\interpolator{a} \interpolator{a}^*\psi$ converges to $\psi$ in the strong $L^2$-sense:
$$ 
\interpolator{a} \interpolator{a}^* \psi \longrightarrow \psi .
$$
In particular, for arbitrary $\psi, \psi' \in \Gamma(E)\subset L^2(E)$, we have 
$$
\langle \interpolator{a}^* \psi',  \interpolator{a} ^* \psi \rangle \to \langle \psi', \psi \rangle
$$
in the $a\to 0$ limit.
\item{}[Proposition 4.6 of \cite{Aoki:2024sjc}] \label{property:Dirac}
For arbitrary $\psi \in \Gamma(E)\subset L^2(E)$, in the limit $a \to 0$, we have a strong $L^2$-convergence 
$$ 
\interpolator{a} (\WilsonDiracoperator)^* \interpolator{a}^* \psi \longrightarrow D^* \psi  \quad (L^2).
$$
Namely, for arbitrary $\psi, \psi' \in \Gamma(E)$, we have 
$$
\langle \interpolator{a}^* \psi' , (\WilsonDiracoperator)^* \interpolator{a} ^* \psi \rangle \to \langle \psi', D^*  \psi \rangle
$$
in the $a\to 0$ limit.   
\end{enumerate}

\section{domain-wall fermion Dirac operators and the Atiyah-Patodi-Singer index}
\label{sec:DWandAPS}

In this section, we will review the domain-wall fermion Dirac operator
and its relation to the Atiyah-Patodi-Singer index~\cite{MR4179728}.

\subsection{Dirac operators on manifolds with boundaries and the APS index}
\label{sec:APSindexreview}

We define $c$, $\epsilon$, and $\Gamma$ by
\begin{equation}\label{eq: clifford generators}
  c = \begin{pmatrix}0 & 1 \\ -1 & 0\end{pmatrix}, \quad
  \epsilon = \begin{pmatrix}0 & 1 \\ 1 & 0\end{pmatrix}, \quad \text{ and }
  \Gamma = \begin{pmatrix}1 & 0 \\ 0 & -1\end{pmatrix}.
\end{equation}
They satisfy that $c^2 = -1$, $\epsilon^2 = \Gamma^2 = 1$, $\Gamma = c\epsilon$, and they all anti-commute.

Let $X_+$ be an oriented even-dimensional Riemannian manifold with a boundary $Y$.
We assume that $Y$ has a collar neighbourhood isometric to the standard product
$I \times Y$ with an interval 
$I=[0,1)$ to which the coordinate is assigned by $u$,
where the boundary is located at $u=0$. 
Note that we do not assume the connectedness of $Y$.  
Let $S=S_+\oplus S_-$ be a $\Z_2$-graded Hermitian vector bundle on $X_+$ with
the $\Z_2$-grading operator $\gamma$ such that $\gamma|_{S_{\pm}}=\pm 1$.
We denote the space of the smooth sections\footnote{
In this section, we take a notation which specifies the base manifold.
} of $S$ on $X_+$ by $C^{\infty}(X_+;S)$.
Let $D \colon C^{\infty}(X_+;S) \to C^{\infty}(X_+;S)$ be a first-order, 
formally selfadjoint, elliptic partial differential operator.
We assume that $D$ is \emph{odd}
in the sense that it anti-commutes with $\gamma$.
We assume that $S$ and $D$ are standard in the following sense:
there exist a Hermitian vector bundle $E$ on $Y$ and a bundle isomorphism from $S|_{I \times Y}$ to $\C^2 \otimes E$ as $\Z_2$-graded Hermitian vector bundles such that, under this isomorphism, $D$ takes the form
\begin{equation}
  D = c \otimes \partial_u + \epsilon \otimes \DiracOperatorOnY = c \otimes \id
\left(\partial_u + B\right),\;\;\;
B:=  \begin{pmatrix}
    \DiracOperatorOnY & 0\\
    0 & -\DiracOperatorOnY
  \end{pmatrix},
\end{equation}
where the grading of $\C^2 \otimes E$ is given by $\Gamma\otimes {\rm id}$ and $\DiracOperatorOnY \colon C^{\infty}(Y;E) \to C^{\infty}(Y;E)$ is a formally selfadjoint, elliptic partial differential operator.
In this paper, we will concentrate on the case when $\DiracOperatorOnY$ has no zero eigenvalues, and assume this condition.
We call $B$ the boundary operator.

Let $C^{\infty}(X_+;S_{\pm}:P_{B}) := \big\{ f \in C^{\infty}(X_+; S_{\pm}) \mid P_{B}(f|_Y) = 0 \big\}$, where $P_{B} \colon L^2(Y;\C^2 \otimes E) \to L^2(Y;\C^2 \otimes E)$ denotes the spectral projection onto the span of the eigensections of the boundary operator ${B}$ with positive eigenvalues.
This is known as the APS boundary condition.
  We define the APS index \cite{MR4179728} of $D$ by
\begin{align}
  \APSindex(D) :=& \dim \big( \Ker D_+ \cap C^{\infty}(X_+;S_+:P_{B}) \big) \\
  &- \dim \big( \Ker D_- \cap C^{\infty}(X_+;S_-:P_{B}) \big),
\end{align}
where $D_\pm = DP_\pm$ with the projection operator $P_\pm=(\id\pm\gamma)/2$.
By the APS index theorem \cite{MR397797} the index is expressed by
\begin{equation}
 \APSindex(D) = \int_{X_+} a_d(S)(x) - \frac{1}{2}\eta(D_Y),
\end{equation}
where $a_d(S)(x)$ is a local invariant $d$-from given by 
curvature of $S$ (in this paper its explicit form is irrelevant) 
and $\eta(D_Y)$ is the Atiyah-Patodi-Singer $\eta$ invariant \cite{MR397797} of $D_Y$. 

\subsection{APS index without product structure near the boundary}
\label{sec:no-product}

In the original definition of the APS index given in the previous subsection, 
we assumed a product metric on $I\times Y$.
When the boundary does not have a product structure, 
we follow the prescription proposed in \cite{Gerd01011992,GILKEY1993129, braverman2019equivariant} (see also \cite{Ballmann_2012})
to define the index.

In general, the boundaries are not totally geodecic,
and it is nontrivial 
to identify an apporopriate boundary operator.
In \cite{Gerd01011992, GILKEY1993129, braverman2019equivariant}
the ``canonical'' boundary operator is defined by
\begin{equation}
B_c:= -(c\otimes \id) D -\partial_u + \frac{d_Y}{2}h,
\end{equation}
where $d_Y$ is dimension of $Y$ and $h$ denotes the mean curvature of $Y$ in $X$.
Here we have taken a trivialization where $\sigma(e_u)=c \otimes \id$ and 
the Clifford connection is trivial in the $u$ direction. 
We assume that $B_c$ is invertible.

Note that $B_c$ is a ``good'' operator, as it is purely intrinsic,
anticommutes with $c\otimes \id$, commutes with $\gamma$, 
and keeps $D$ selfadjoint by imposing the boundary condition 
by $P_{B_c} \colon L^2(Y;\C^2 \otimes E) \to L^2(Y;\C^2 \otimes E)$:
the spectral projection onto the span of the eigensections of the canonical 
boundary operator ${B_c}$ with positive eigenvalues.
The generalized APS index is then defined by
\begin{align}
  \gAPSindex(D) :=& \dim \big( \Ker D_+ \cap C^{\infty}(X_+;S_+:P_{B_c}) \big) \\
  &- \dim \big( \Ker D_- \cap C^{\infty}(X_+;S_-:P_{B_c}) \big).
\end{align}

With this $B_c$, the APS index theorem is modified to
\begin{equation}
 \gAPSindex(D) = \int_{X_+} a_d(S) - \frac{1}{2}\eta(D_Y) + \int_Y b_{d-1}(S),
\end{equation}
where the gauge invariant $(d-1)$-form $b_{d-1}(S)$
is given by the curvature of $S$ and the second fundamental form.
Interestingly, the additional boundary term can be obtained 
by difference from the standard APS index with a product metric near the boundary.
In Theorem 2.3 of \cite{Gerd01011992}, it was proved that the generalized APS index is
stable against perturbative deformations keeping $B_c$ unchanged.
  In Proposition~5.4 and appendix A of \cite{braverman2019equivariant}
  a deformation of $X_+$ we denote by $X^\epsilon_+$ 
  was explicitly given where the connection and metric in
  $[0,\epsilon/2)\times Y$ region of the collar part $I\times Y$ are smoothly deformed to have
    a product metric keeping $B_c$ unchanged, 
    while in the $[\epsilon/2, \epsilon)\times Y$ region they are smoothly connected
      to the rest of original $X_+$ at $u=\epsilon$.
      Let $D^\epsilon$ be the deformed Dirac operator on $X^\epsilon_+$ associated with the deformed metric and connection
      given in \cite{braverman2019equivariant}.
From the above theorem, there exists $\epsilon_0>0$ such that for arbitrary $\epsilon < \epsilon_0$,
\begin{equation}
\label{eq:gAPSAPS}
 \gAPSindex(D) = \APSindex(D^\epsilon),
\end{equation}
holds. 
Let $a^\epsilon_d(S)$ be the local invariant $d$-form on $X^\epsilon_+$.
Comparing with the standard APS index theorem with respect to $\APSindex(D^\epsilon)$ we obtain
\begin{equation}
 \int_Y b_{d-1}(S) = \lim_{\epsilon\to 0} \int_{X^\epsilon_+} a^\epsilon_d(S)-\int_{X_+} a_d(S),
\end{equation}
where $b_{d-1}(S)$ is the transgression form on $Y$.

Namely, the generalized APS index on $X_+$ can be interpreted as
the standard one on a slightly modified manifold $X^\epsilon_+$.

\subsection{Domain-wall fermion Dirac operators}

Let $X$ be a closed oriented even dimensional Riemannian manifold.
Let $S$  be a $\Z_2$-graded Hermitian vector bundle on $X$ with
the $\Z_2$-grading operator $\gamma$.
Let  $D \colon C^{\infty}(X;S) \to C^{\infty}(X;S)$ be an odd, first-order, 
formally selfadjoint, elliptic partial differential operator.
Let $Y \subset X$ be a separating submanifold that decomposes $X$ into the union of two compact manifolds
$X_+$ and $X_-$ which share the common boundary $Y$.
Let $\kappa: X\to [-1,1]$ be an $L^\infty$-function such that $\kappa \equiv \pm 1$ on  $X_{\pm} \setminus Y$.
We define the domain-wall fermion Dirac operator by
\begin{equation}
 D - m \kappa \gamma,
\end{equation}
where the mass parameter $m$ takes a positive value.

We also introduce a one-parameter family $\kappa_t$ with $t\in (-\infty,+\infty)$,
such that 
\begin{equation}
 \kappa_t =
\begin{cases}
\kappa \ (t\ge 1) \\ 
- 1\ (t\le -1)
\end{cases}
\end{equation}
where we assume that $\kappa_t$ is smooth with respect to $t$ in the range $I:=[-1,1]$.
The associated one-parameter family of the massive Dirac operator is given by
\begin{equation}
  \label{eq:Dt}
  D-m\kappa_t \gamma.
\end{equation}
Later assuming $D-m\kappa_t \gamma$ is invertible at $t=\pm1$, 
we consider the spectral flow
\begin{equation}
\mathrm{sf}[D-m\kappa_t \gamma] 
\end{equation}
which is an element of the $ K^1(I,\partial I)$ group,
where $\partial I$ is the boundary of $I$ which consists of two end points $\{-1,1\}$. 
The definitions of the group $K^1(I,\partial I)$ and the associated 
spectral flow will be given in the later sections.

In physics, it is known that the domain-wall fermion Dirac operator
has the so-called ``edge states'', which are low eigenstates whose amplitude is exponentially localized at
the domain-wall satisfying a different ``boundary condition''  
from the APS boundary condition introduced in the previous section.
In order to see this, let us consider $\R\times Y$ instead of a closed manifold $X$,
and a Hermitian vector bundle $E$ on $Y$. We denote the coordinate of $\R$ by $u$.
Let $\bar{S}=\C^2 \otimes E$ be a $\Z_2$-graded Hermitian vector bundle on $\R\times Y$
with the $\Z_2$-grading operator $\gamma=\Gamma \otimes \id_E$.
Let $\kappa_0 \colon \R \to \R$ be a sign function such that $\kappa_0(\pm u) = \pm 1$ for $u > 0$.
Let $\bar{D}\colon C^{\infty}(\R\times Y;\bar{S}) \to C^{\infty}(\R\times Y;\bar{S})$ 
be an odd, first-order, formally selfadjoint, elliptic partial 
differential operator.
The corresponding domain-wall fermion Dirac operator, which is essentially selfadjoint on
$L^2(\R \times Y; \C^2 \otimes E)$, takes the form
\begin{equation}
  \bar{D} - m\kappa_0\gamma = c \otimes \partial_u + \epsilon \otimes \DiracOperatorOnY - m\kappa_0\Gamma \otimes \id_E =
  \begin{pmatrix}
    -m \kappa_0\id_E& \partial_u + \DiracOperatorOnY \\
    -\partial_u + \DiracOperatorOnY & +m\kappa_0 \id_E
  \end{pmatrix},
\end{equation}
where $\DiracOperatorOnY \colon C^{\infty}(Y;E) \to C^{\infty}(Y;E)$ is a 
formally selfadjoint, elliptic partial differential operator.
The edge-states are defined as vectors in the 
intersection of the kernel of $c \otimes \partial_u - m\kappa_0\Gamma \otimes \id_E$ and the 
eigenspace of $\epsilon \otimes \DiracOperatorOnY$. 
Since $c \otimes \partial_u - m\kappa_0\Gamma \otimes \id_E$ and $\epsilon \otimes \DiracOperatorOnY$ 
anti-commute, each edge-state has the form  
\begin{equation}
 v_-\otimes \psi_{\lambda_\DiracOperatorOnY}\exp(-m|u|),
\end{equation}
for an eigensection $\psi_{\lambda_\DiracOperatorOnY}$ of $\DiracOperatorOnY$ with the eigenvalue $\lambda_\DiracOperatorOnY$ and 
$v_-=(1/\sqrt{2},-1/\sqrt{2})^T$ ($T$ denotes the transpose). 
The $L^2$-condition for edge-state implies $\epsilon v_-=-v_-$.
It is also important to note that the condition $\epsilon v_-=-v_-$
is different from the APS boundary condition.

\subsection{The domain-wall Dirac operators and the APS index}\label{eq:FFMOYY}

In this subsection, we discuss a nontrivial relation between the
domain-wall fermion Dirac operator and the APS index proved in \cite{MR4179728}.

We consider the same $X= X_+ \cup X_-$, the Hermitian vector bundle $S$
and the domain-wall fermion Dirac operator $D - m \kappa \gamma$
as in the previous subsection.
Here as in the subsection \ref{sec:APSindexreview}, 
let us assume a collar neighbourhood near the domain-wall with a product metric:
we assume the structure $I'\times Y$, where the interval
$I'$ is a double of $I$ so that $(I' \times Y)\cap X_\pm = I\times Y$.
We will consider the case without product structure later.
We also assume that $\DiracOperatorOnY$ has no zero eigenvalue.
In  \cite{MR4179728}, it was proved that there exists a constant $m_0>0$ that
depends only on $X,S$ and $D$ such that  
\begin{equation}\label{eq:FFMOYYformula}
  \APSindex (\restrictedTo{D}{X_+}) = -\frac{\eta(D - m \kappa \gamma) - \eta(D + m \gamma)}{2},
\end{equation}
holds for any $m>m_0$.
Note in this case both $D + m \gamma$ and $D - m \kappa \gamma$ are invertible.

The proof of the above equality was given by considering a certain embedding of $((-\infty, 0)\times Y) \cap X_+$ into 
$\R\times X$ where the coordinate for $\R$ is denoted by $t$.
We pull back the bundle $S$ on $X$ to $\R\times X$, which
will be denoted by the same symbol.
We introduce a selfadjoint operator $\DiracOperatorOnCylinderAPS_m \colon L^2(\R \times X; S \oplus S) \to L^2(\R \times X; S \oplus S)$ defined by
\begin{equation}
  \DiracOperatorOnCylinderAPS_m := c\otimes \partial_t + \epsilon \otimes (D-m\kappa_t\gamma)
=  \begin{pmatrix}
    0 & D-m\kappa_t\gamma + \partial_t \\
    D-m\kappa_t\gamma - \partial_t & 0
  \end{pmatrix}.
\end{equation}
Note that  $\DiracOperatorOnCylinderAPS_m$ is an odd operator with respect to the grading of 
$S\oplus S=\C^2\otimes S$ given by $\Gamma \otimes \id$.
In fact, the both sides of \eqref{eq:FFMOYYformula} correspond to
different evaluations of the same index of $\DiracOperatorOnCylinderAPS_m$.

One evaluation uses the localization of the zero eigensections which 
are localized in the neighbourhood of the submanifold in $\R \times X$ determined by $\kappa_t=0$, 
which is diffeomorphic to $((-\infty, 0)\times Y) \cap X_+$, 
when the mass parameter $m$ is sufficiently large.
Then the product formula of the index \cite{MR2361481} 
as well as \cite{MR397797}*{Proposition 3.11}
indicates that the index equals to the APS index on $X_+$:
\begin{equation}
  \textrm{Ind}(\DiracOperatorOnCylinderAPS_m) =  \APSindex (\restrictedTo{D}{X_+}).
\end{equation}

Another evaluation employs the APS index theorem,
where \cite{MR397797}*{Proposition 3.11} indicates that 
$\textrm{Ind}(\DiracOperatorOnCylinderAPS_m)$ equals to
the APS index on $I''\times X$ with two boundaries of 
the interval $I''$.
Note that the constant term in the asymptotic expansion of the heat kernel vanishes on such an odd-dimensional manifold as $I'' \times X$.
Therefore, the only boundary $\eta$-invariants contributes to the index, and we have 
\begin{equation}
 \textrm{Ind}(\DiracOperatorOnCylinderAPS_m) = -\frac{\eta(D - m \kappa \gamma) - \eta(D + m \gamma)}{2}.
\end{equation}

In this work, we use third evaluation using the spectral flow. 
Following the standard argument, 
we can show 
that $\textrm{dim}\textrm{Ker}(\DiracOperatorOnCylinderAPS_m \hat{P}_\pm)$ with $\hat{P}_\pm=(1\pm \Gamma \otimes \id)/2$
corresponds to the number of crossing zero eigenstates of $D-m\kappa_t \gamma$ from 
negative/positive to positive/negative, respectively. 
Thus we have the following modification of the theorem in \cite{MR4179728}.

\begin{theorem}\label{thm:FFMOYY}
  There exists a constant $m_0>0$ that depends only on $X,S$ and $D$ such that  
\begin{equation}\label{eq:FFMOYY}
 \APSindex (\restrictedTo{D}{X_+}) = \mathrm{sf}[D-m\kappa_t\gamma]
\end{equation}
holds for any $m>m_0$,
where $\mathrm{sf}$ denotes the spectral flow defined later in Section~\ref{sec:sf}.
\end{theorem}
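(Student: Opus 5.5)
The plan is to route the identity \eqref{eq:FFMOYY} through the index of the suspended operator $\DiracOperatorOnCylinderAPS_m$ on $\R \times X$, which was already identified with $\APSindex(\restrictedTo{D}{X_+})$ for $m$ large by the localization argument recalled above from \cite{MR4179728}. It therefore suffices to prove
\begin{equation}
  \textrm{Ind}(\DiracOperatorOnCylinderAPS_m) = \mathrm{sf}[D-m\kappa_t\gamma],
\end{equation}
with the index taken as $\dim\Ker(\DiracOperatorOnCylinderAPS_m\hat P_+)-\dim\Ker(\DiracOperatorOnCylinderAPS_m\hat P_-)$, where $m_0$ is the constant from \cite{MR4179728}. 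First I will check that both endpoint operators are invertible for $m>m_0$. For $D+m\gamma$ this follows from $(D+m\gamma)^2=D^2+m^2\geq m^2$. For $D-m\kappa\gamma$ it is part of the statement of \cite{MR4179728}: after squaring, the only possible low modes are edge states of the form $v_-\otimes\psi_{\lambda}\exp(-m|u|)$, and these carry eigenvalue close to $\lambda\neq 0$ because $\DiracOperatorOnY$ has no zero eigenvalue. Consequently the family $D-m\kappa_t\gamma$, $t\in I$, is a continuous family of selfadjoint Fredholm operators with common domain $L^2_1$ (the perturbation $m\kappa_t\gamma$ is bounded and norm-continuous in $t$), it is invertible at $\partial I$, and its spectral flow is well defined.

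Next comes the standard Robbin--Salamon / APS "index equals spectral flow" argument for operators of the form $\partial_t + A(t)$ on $\R\times X$, where $A(t)=D-m\kappa_t\gamma$ is constant for $|t|\geq 1$ and invertible there. The operator $\DiracOperatorOnCylinderAPS_m\hat P_\pm$ corresponds to $D-m\kappa_t\gamma\mp\partial_t$, so the index is $\mathrm{Ind}(\partial_t + A(t))$ up to a sign convention that I will fix to match the definition in Section~\ref{sec:sf}. I will do this in three steps.
\begin{enumerate}
\item Perturb $A(t)$ within the class of families with the same ends. This changes neither the Fredholm index, by homotopy invariance of $\partial_t+A$ given invertible ends, nor the spectral flow, by homotopy invariance relative to $\partial I$.
\item Arrange that the eigenvalue crossings are finite and transversal, using the discreteness of the spectrum of $A(t)$.
\item Localize: near each crossing the kernel of $\partial_t+A$ is computed by the scalar ODE $f'+\lambda(t)f=0$. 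This has a one-dimensional $L^2$ kernel exactly when $\lambda$ goes from negative to positive, and the cokernel is treated symmetrically.
\end{enumerate}
Alternatively, I can cite the general theorem of Robbin--Salamon directly, since the hypotheses (common domain, compact resolvent, invertible limits) hold.

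The main obstacle is keeping the conventions consistent, namely the orientation of the crossing, the sign of $\kappa_t$ running from $-1$ to $\kappa$, and the sign attached to $\hat P_\pm$, so that the result is $+\APSindex$ and not its negative. There is a technical secondary point: $\kappa$ is only $L^\infty$, so I will smooth $\kappa$ near $Y$ and appeal to homotopy invariance. That invariance holds because invertibility at $t=1$ persists uniformly for $m>m_0$, by the same edge-state estimate. Finally, I will combine this with the localization identity from \cite{MR4179728} to conclude.
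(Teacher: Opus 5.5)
Your plan is the paper's argument with the details filled in. The paper also identifies both sides with $\mathrm{Ind}(\DiracOperatorOnCylinderAPS_m)$: it quotes the localization result of \cite{MR4179728} for the APS side, and asserts ``following the standard argument'' that $\dim\Ker(\DiracOperatorOnCylinderAPS_m\hat P_\pm)$ counts zero crossings of $D-m\kappa_t\gamma$. Your analytic input is adequate: common domain $L^2_1$ with compact embedding, $m\kappa_t\gamma$ a bounded norm-continuous perturbation, invertible ends, and Robbin--Salamon. The smoothing of $\kappa$ is not even needed.

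The step that does not go through as promised is the sign, which you postpone as a convention to be ``fixed to match Section~\ref{sec:sf}''. There is nothing left to choose. Write $A(t)=D-m\kappa_t\gamma$. As you say, $\DiracOperatorOnCylinderAPS_m\hat P_+$ acts as $A(t)-\partial_t$ and $\DiracOperatorOnCylinderAPS_m\hat P_-$ acts as $A(t)+\partial_t$. By your own ODE computation:
\begin{itemize}
\item $L^2$ solutions of $\partial_t u=A(t)u$ come from eigenvalues crossing zero downward (positive to negative);
\item $L^2$ solutions of $\partial_t u=-A(t)u$ come from upward crossings.
\end{itemize}
Hence $\mathrm{Ind}(\DiracOperatorOnCylinderAPS_m)=-\mathrm{Ind}(\partial_t+A)$, which is the number of downward crossings minus the number of upward crossings.

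The recipe of Section~\ref{sec:sf} takes $\mathrm{sgn}_k$ to be the sign of $\lambda_k-\lambda_{k+1}$, so it counts an upward crossing as $+1$. Taken literally, it therefore yields $\APSindex(\restrictedTo{D}{X_+})=-\mathrm{sf}[D-m\kappa_t\gamma]$. A sanity check is the degenerate case $Y=\emptyset$, $\kappa\equiv 1$. On $\Ker D\cap S_\pm$ one has $A(t)=\mp m\kappa_t$, so the zero modes in $S_+$ run from $+m$ to $-m$. The recipe then returns $\dim(\Ker D\cap S_-)-\dim(\Ker D\cap S_+)=-\mathrm{Ind}\,D$.

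So the statement is true exactly when $\mathrm{sf}$ counts downward crossings as $+1$. That orientation is also the one consistent with \eqref{eq:FFMOYYformula}. It also matches the paper's later corollary $\mathrm{sf}=-\tfrac12\eta(\cdots)$ for the finite lattice family, since for a finite matrix family an upward crossing raises $\eta$ by $2$.

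The paper's one-line sketch hides the same slip: it attributes $\Ker(\DiracOperatorOnCylinderAPS_m\hat P_+)$ to upward crossings, the opposite of what the ODE gives. You should fix the orientation of the spectral flow explicitly rather than claim the conventions can be aligned. With that done, your proof is complete modulo the cited localization theorem.
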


Next let us consider the case where the neighborhood of $Y$ does not have a product metric.
In Sec.~\ref{sec:no-product}, we observed that we can deform $X_+$ 
to $X^\epsilon_+$ which has a product metric on its
collar near the boundary, without changing the boundary condition 
defined by the canonical boundary operator $B_c$.
The generalized APS index is stable against such deformations
and thus equals to the standard APS index of $D^\epsilon$ on $X^\epsilon_+$
defined in \cite{braverman2019equivariant} with a sufficiently small $\epsilon$.

We can deform $X$ to $X^\epsilon = X_+^\epsilon \cup X_-^\epsilon$ 
and the domain-wall fermion Dirac oparator to $D^\epsilon -m\kappa_t\gamma$ on it in the same way.
By Proposition 5.4 of \cite{braverman2019equivariant}, 
the deformed Clifford actions and connections depend continuously 
on the deformation parameter $\epsilon$.
Hence, after identifying the varying $L^2$-spaces by a continuous family of unitaries, 
the corresponding Dirac-type operators form a norm-continuous family $L^2_1(X,E)\to L^2(X,E)$.
Since we are considering the closed manifold and the domain of the Dirac-type operators is fixed, 
Proposition 2.2 of \cite{MR2114489} implies that this family is continuous in the Riesz topology.

Using this fact, we obtain the following theorem.
\begin{theorem}\label{thm:FFMOYYg}
  There exists a constant $m_0>0$ such that 
\begin{equation}\label{eq:FFMOYYg}
 \gAPSindex (\restrictedTo{D}{X_+}) 
 =\mathrm{sf}[D-m\kappa_t\gamma]
\end{equation}
holds for any $m>m_0$. 
\end{theorem}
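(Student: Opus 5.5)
The plan is to reduce Theorem~\ref{thm:FFMOYYg} to the product case, Theorem~\ref{thm:FFMOYY}, applied on the deformed manifold $X^\epsilon$. We then show that neither side of \eqref{eq:FFMOYYg} changes under the deformation.

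\emph{The left-hand side.} Choose $\epsilon<\epsilon_0$ as in Section~\ref{sec:no-product}. By \eqref{eq:gAPSAPS},
\[
\gAPSindex(\restrictedTo{D}{X_+})=\APSindex(\restrictedTo{D^\epsilon}{X^\epsilon_+}).
\]
On $X^\epsilon$ the collar $[0,\epsilon/2)\times Y$ carries a product metric and connection, and the boundary operator is unchanged (it is $B_c$). Restricting $I'$ to a shorter interval, Theorem~\ref{thm:FFMOYY} applies to $D^\epsilon$, since its proof only needs some product collar. It gives $m_0(\epsilon)$ such that
\[
\APSindex(\restrictedTo{D^\epsilon}{X^\epsilon_+})=\mathrm{sf}[D^\epsilon-m\kappa_t\gamma]
\quad\text{for all } m>m_0(\epsilon).
\]
We fix one such $\epsilon$ and set $m_0:=m_0(\epsilon)$. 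Since the theorem only asserts existence of some $m_0$, the dependence of $m_0$ on $\epsilon$ is harmless.

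\emph{The right-hand side.} It remains to show $\mathrm{sf}[D^\epsilon-m\kappa_t\gamma]=\mathrm{sf}[D-m\kappa_t\gamma]$ for $m>m_0$. Consider the two-parameter family
\[
(s,t)\mapsto D^{s}-m\kappa_t\gamma,\qquad s\in[0,\epsilon],
\]
with $D^0=D$. By the Riesz continuity from Proposition~2.2 of \cite{MR2114489}, noted just before the theorem, it is continuous in $s$. Adding the bounded term $m\kappa_t\gamma$, which depends continuously on $t$, preserves Riesz continuity. Homotopy invariance of the spectral flow (its definition as an element of $K^1(I,\partial I)$, Section~\ref{sec:sf}) then gives the equality, provided the endpoint operators $t=\pm1$ stay invertible for every $s$.

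\emph{The endpoints.} At $t=-1$ the operator is $D^s+m\gamma$. Since $D^s$ is odd, $(D^s+m\gamma)^2=(D^s)^2+m^2\ge m^2$, so it is invertible for every $s$.

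At $t=1$ the operator is $D^s-m\kappa\gamma$, and invertibility is the main obstacle. Here I would follow the approach of \cite{MR4179728}: compute
\[
(D^s-m\kappa\gamma)^2=(D^s)^2+m^2-m\,[\text{Clifford action of }d\kappa]\,\gamma,
\]
where the last term is supported on $Y$. A kernel element must then be close to an edge state, of the form $v_-\otimes\psi\,e^{-m|u|}$, localized near $Y$, with the normal Clifford action along the wall. Its residual eigenvalue is governed by the boundary operator, which here is $B_c$ for every $s$ because the deformation leaves $B_c$ unchanged. The assumption that $B_c$ is invertible therefore yields a uniform lower bound for $m$ large. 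Uniformity in $s\in[0,\epsilon]$ follows by compactness of the parameter interval and continuity of the constants in the localization estimates.

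If this uniform estimate is troublesome, the fallback is the paper's own standing assumption that the domain-wall operator at $t=1$ is invertible. We would impose it along the (short) deformation path, enlarging $m_0$ if necessary, and the homotopy argument then closes the proof.
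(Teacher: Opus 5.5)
Your treatment of the left-hand side (via \eqref{eq:gAPSAPS} and Theorem~\ref{thm:FFMOYY} applied to $D^\epsilon$) and of the $t=-1$ endpoint agrees with the paper. The $t=1$ endpoint, however, is a genuine gap. You fix $m_0=m_0(\epsilon)$, and the theorem is asserted for every $m>m_0$, so your homotopy $s\mapsto D^s-m\kappa\gamma$ needs invertibility for every $s\in[0,\epsilon]$ at each such $m$. The edge-state analysis of \cite{MR4179728} needs a product collar. For $D^s$ that collar has width $s/2$, and the resulting mass threshold depends on $1/s$ (the paper states this dependence on $1/\epsilon$ explicitly), so it diverges as $s\to 0$. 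At $s=0$, i.e.\ for $D$ itself, there is no product collar at all. Compactness of $[0,\epsilon]$ does not help, because the constants are not continuous at $s=0$. Asserting that the residual eigenvalue is ``governed by $B_c$'' for a non-product wall would require a new localization estimate including second-fundamental-form terms, which is exactly what the reduction to $X^\epsilon$ was meant to avoid. Your fallback of assuming invertibility along the deformation path adds a hypothesis absent from the statement. Invertibility of $D-m\kappa\gamma$ for large $m$ is part of what must be proved for $\mathrm{sf}[D-m\kappa_t\gamma]$ to be defined.

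The missing idea is a quantitative perturbation argument that uses the product-case analysis only at one fixed $\epsilon$. The paper first chooses $\epsilon$ small, depending on the gap $\lambda_{B_c}$ of $B_c$, so that $\|D-D^\epsilon\|\le\lambda_{B_c}/100$. It then extracts from the proof in \cite{MR4179728} not merely invertibility but the $m$-uniform bound $\|(D^\epsilon-m\kappa\gamma)^{-1}\|\le 2/\lambda_{B_c}$ for $m>m_1(\epsilon,\lambda_{B_c})$. Along the straight path $(1-u)D^\epsilon+uD-m\kappa\gamma$, $u\in[0,1]$, one has $\bigl((1-u)D^\epsilon+uD-m\kappa\gamma\bigr)(D^\epsilon-m\kappa\gamma)^{-1}=\id+u(D-D^\epsilon)(D^\epsilon-m\kappa\gamma)^{-1}$, and the last term has norm at most $1/50$. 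A Neumann series therefore gives invertibility along the whole path for all $m>m_1$. Combined with the trivially invertible path at $t=-1$, this yields the equality of spectral flows. Note the order of choices: $\epsilon$ must be tied to $\lambda_{B_c}$ before $m_0$ is fixed, whereas your proposal allows any $\epsilon<\epsilon_0$.
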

\begin{proof}
Let $B_c$ the canonical boundary operator on $Y$.
Let $\lambda_{B_c}$ be the positive square root of the first non-zero eigenvalue of $B_c^2$.
We begin with two observations.
First, from the discussion in Sec.~\ref{sec:no-product}, there exists a constant $\epsilon_1>0$ such that $\gAPSindex (\restrictedTo{D}{X_+}) = \APSindex (\restrictedTo{D^{\epsilon}}{X^{\epsilon}_+})$ holds for any $0<\epsilon < \epsilon_1$.
Second, since $D$ and $D^\epsilon$ differ only in the $(-\epsilon, \epsilon)\times Y$ subregion of $X$, there exists a constant $\epsilon_2 > 0$ such that $\|D-D^{\epsilon}\| \le \lambda_{B_c}/100$ holds for any $0 < \epsilon < \epsilon_2$.
A closer examination of the proofs of Lemma 3 and Proposition 9 in \cite{MR4179728} shows that the constant $m_1$ in Proposition 9 depends only on $1/\epsilon$ and $\lambda_{B_c}$.
For $m > m_1$, we have an equality $\APSindex (\restrictedTo{D^{\epsilon}}{X^{\epsilon}_+})=\mathrm{sf}[D^{\epsilon}-m\kappa_t\gamma]$.
Note that $[D^{\epsilon}-m\kappa_t\gamma]$ is invertible at $t=0,1$.
Moreover, we have $\|(D^{\epsilon}-m\kappa\gamma)^{-1}\| \le 2/\lambda_{B_c}$.
For the last step of the proof, we define the following families of selfadjoint operators
\begin{equation}
D_1^t:= (1-t)D + t D^\epsilon + m\gamma,\;\;\; D_2^t := (1-t)D^\epsilon + t D - m\kappa \gamma.
\end{equation}
Since $(D_1^t)^2=[(1-t)D + t D^\epsilon]^2+m^2$, $D_1^t$ is invertible for any $t$.
For $D^t_2$, we have
\begin{equation}
D_2^t(D_2^0)^{-1}=\id + t(D-D^\epsilon)(D^\epsilon - m\kappa \gamma)^{-1}. 
\end{equation} 
We have
\begin{equation}
||(D-D^\epsilon)(D^\epsilon - m\kappa \gamma)^{-1}|| \le \frac{\lambda_{B_c}}{100} \cdot \frac{2}{\lambda_{B_c}} < 1,
\end{equation}
and therefore $D^t_2$ is invertible for any $t$.
Thus, 
\begin{equation}
\mathrm{sf}[D-m\kappa_t\gamma] = \mathrm{sf}[D_1^t]+\mathrm{sf}[D^\epsilon-m\kappa_t\gamma]+\mathrm{sf}[D_2^t]
\end{equation}
holds for $m>m_0$, where the first and third contributions are zero, and the claim follows.
\end{proof}

\subsection{domain-wall fermion Dirac operators on a flat torus}
In this work, we consider a $d$-dimensional square lattice 
with periodic or antiperiodic boundary conditions.
Therefore, the manifold $X$ we consider in the continuum limit 
is limited to a flat torus only.

The domain-wall between $X_+$ and $X_-$ can be any $d-1$-dimensional
curved submanifold of $X$.
With this curved domain-wall, nontrivial curvature contribution
(which corresponds to a gravitational background)
is induced as shown in 
\cite{Aoki:2022cwg,Aoki:2022aez,Aoki:2023lqp, Aoki:2024bwx}.
In our main theorem in which we compare the spectral flow of 
the lattice and continuum domain-wall fermion Dirac operators,
we do not require the product metric near the 
domain-wall between $X_+$ and $X_-$.
Therefore, our goal is to construct a lattice version of
$\mathrm{sf}[D-m\kappa_t \gamma]$ which equals to $\gAPSindex(D|_{X_+})$.


\section{\texorpdfstring{$K$}{K}-groups and unbounded selfadjoint operators}
\label{sec:K}


In this paper, we compare the spectral flow of continuum Dirac operators with that of lattice Dirac operators as elements of $K$-theory.
For this purpose, it is desirable to have a definition of $K$-groups and $KO$-groups that allows one to treat unbounded and bounded selfadjoint operators on an equal footing.
Accordingly, in this section, we present a self-contained definition of $K$-groups and $KO$-groups of arbitrary degree using unbounded selfadjoint operators, and prove that it is naturally isomorphic to the standard definition formulated in terms of bounded selfadjoint operators.
Although there are several possible choices of topology on the space of unbounded selfadjoint operators, we adopt the Riesz topology.
Related work in this direction includes that of Lesch~\cite{MR2114489}.
A detailed treatment of these topics will appear in a forthcoming paper.

We note that neither the suspension isomorphism nor Bott periodicity is proved in this section.
In later sections, we use the suspension isomorphism, e.g. \(K^0(\point) \cong K^1([0,1], \{0,1\})\), via the isomorphism established here by identifying it with the corresponding suspension isomorphism in the standard $K$-theory or $KO$-theory.

For notational simplicity, we explain only the definition of $K$-groups; the modification to $KO$-groups is straightforward.

We will construct two abelian semigroups, for each $p = 0,1,2,\dots$ and $q = -1,0,1,\dots$,
\begin{equation}
  K_{\bounded}^{p,q}(X,A) \text{ and } K_{\Riesz}^{p,q}(X,A)
\end{equation}
for a compact Hausdorff space $X$ and a closed subset $A \subset X$.
We will show in Section~\ref{subsection: group structure} that both semigroups are in fact groups.
$K_{\bounded}^{p,q}(X,A)$, defined using bounded selfadjoint operators, is naturally identified with the standard $K^n(X,A)$ for $n = p-q$.
$K_{\Riesz}^{p,q}(X,A)$ is defined using unbounded selfadjoint operators equipped with the Riesz topology.
In Theorem~\ref{theorem: isomorphism between bounded and Riesz}, we will prove that $K_{\bounded}^{p,q}(X,A)$ and $K_{\Riesz}^{p,q}(X,A)$ are canonically isomorphic.

\subsection{Definitions of \texorpdfstring{$K_{\bddorRiesz}^{p,q}$}{Kpq}-cocycles}\label{subsection: Kpq-cocycles}

We begin by recalling the definition of the Riesz topology.
Let $H$ be a separable Hilbert space over $\C$.
Let $\boundedoperators(H)$ denote the space of bounded operators on $H$ equipped with the norm topology $\operatornorm{\cdot}$.
Let $\boundedselfadjointoperators(H) \subset \boundedoperators(H)$ denote the space of bounded selfadjoint operators on $H$.
Let $\unboundedselfadjointoperators(H)$ denote the space of densely defined unbounded selfadjoint operators on $H$.

In this paper, all unbounded operators are assumed to be densely defined, and we use the term ``unbounded operator" to mean a possibly unbounded operator.
Moreover, the term ``Hilbert space" is understood to include finite-dimensional vector spaces equipped with a Hermitian inner product; the same convention applies to Hilbert bundles.

We define the Riesz transform $\Riesztransform \colon \unboundedselfadjointoperators(H) \to \boundedselfadjointoperators(H)$ via functional calculus associated with the homeomorphism
\begin{equation}
  \Riesztransform \colon \R \to (-1,1), \quad \lambda \mapsto \frac{\lambda}{\sqrt{1+\lambda^2}}.
\end{equation}
The Riesz topology on $\unboundedselfadjointoperators(H)$ is defined as the pullback of the norm topology on $\boundedselfadjointoperators(H)$ via $\Riesztransform$.
We note that $h \in \boundedselfadjointoperators(H)$ is in $\Image(\Riesztransform)$ if and only if $\operatornorm{h} \le 1$ and $\Kernel (h \pm \id_H) = \{0\}$.

Let $p = 0,1,2,\dots$ and $q=-1,0,1,\dots$.
The Clifford algebra $\cliffordalgebra^{q+1,p}$ is the $\ast$-algebra over $\R$ generated by
\begin{equation}
  \{\epsilon_0, \epsilon_1, \dots, \epsilon_q, e_1, e_2, \dots, e_p \}
\end{equation}
subject to the anticommutation relations
\begin{gather}
  \anticommutator{\epsilon_k}{\epsilon_{k'}} = 2 \delta_{k,k'}, \quad \anticommutator{e_l}{e_{l'}} = -2 \delta_{l,l'}, \\
  \anticommutator{\epsilon_k}{e_l} = 0, \\
  \epsilon_k^* = \epsilon_k, \quad e_l^* = - e_{l}
\end{gather}
for $k,k' = 0,1,\dots,q$ and $l,l' = 1,2,\dots,p$.
Note our convention for $p$ and $q$: $q$ starts from $-1$, and the order of $p$ and $q$ in the Clifford algebra $\cliffordalgebra^{q+1,p}$ is reversed.
We adopt the convention that, when $q=-1$, there are no generators $\epsilon_k$.
We denote by $\starHom(C_0,C_1)$ the set of $\ast$-homomorphisms between $\ast$-algebras $C_0$ and $C_1$.
\begin{definition}
  We define
  \begin{gather}
    \boundedselfadjointoperators^{p,q}(H) := \big\{ (c,h) \in \starHom\big(\cliffordalgebra^{q+1,p},\boundedoperators(H)\big) \times \boundedselfadjointoperators(H) \bigm\vert \heartsuit \big\} \\
    \unboundedselfadjointoperators^{p,q}(H) := \big\{ (c,h) \in \starHom\big(\cliffordalgebra^{q+1,p},\boundedoperators(H)\big) \times \unboundedselfadjointoperators(H) \bigm\vert \heartsuit \big\},
  \end{gather}
  where the condition $\heartsuit$ means that
  \begin{equation}
    \anticommutator{c(\epsilon_k)}{h} = 0, \quad \anticommutator{c(e_l)}{h} = 0
  \end{equation}
  for each $k = 0,1,\dots, q$ and $l = 1,2,\dots, p$.
\end{definition}

Let $X$ be a compact Hausdorff space.
Let $\mathcal{H} \to X$ be a Hilbert bundle over $X$ with fibre a separable Hilbert space over $\C$.
We assume throughout this paper that the structure group of a Hilbert bundle is equipped with the norm topology.
We denote by $\mathcal{H}_x$ the fibre of $\mathcal{H}$ at $x \in X$.

We would like to consider continuity for bundle maps between Hilbert bundles; however, since we work in an unbounded setting, we define it carefully as follows.
Let $x_0 \in X$.
Let $U$ be an open neighborhood of $x_0$ and $\phi \colon U \times \mathcal{H}_{x_0} \cong \left.\mathcal{H}\right|_U$ a local trivialisation of $\mathcal{H}$ over $U$.
For each $x \in U$, we let
\begin{equation}
  \phi^*_x \colon \boundedoperators(\mathcal{H}_x) \to \boundedoperators(\mathcal{H}_{x_0})
\end{equation}
be the bijection induced by $\phi$.

We first consider families of bounded operators.
Let $(c,h) = \big( \{c_x\}_{x \in X}, \{h_x\}_{x \in X} \big)$ be a family of $(c_x,h_x) \in \boundedselfadjointoperators^{p,q}(\mathcal{H}_x)$ parametrized by $x \in X$.
We define a map
\begin{equation}
  F_{\phi} \colon U \to \big( \boundedoperators(\mathcal{H}_{x_0}) \big)^{(q+1)+p+1}
\end{equation}
by
\begin{equation}
  x \mapsto \big( \phi^*_x(c_x(\epsilon_0)), \dots, \phi^*_x(c_x(\epsilon_q)), \phi^*_x(c_x(e_1)), \dots, \phi^*_x(c_x(e_p)), \phi^*_x(h_x) \big).
\end{equation}
\begin{definition}
  Let $(c,h)$ be a family as above.
  \begin{enumerate}
  \item
    Let $x_0 \in X$.
    We say that $(c,h)$ is continuous at $x_0$ if there exist an open neighborhood $U$ of $x_0$ and a local trivialisation $\phi$ of $\mathcal{H}$ over $U$ such that the map $F_{\phi} \colon U \to \big( \boundedoperators(\mathcal{H}_{x_0}) \big)^{(q+1)+p+1}$ defined as above is continuous at $x_0$ with respect to the norm topology on $\boundedoperators(\mathcal{H}_{x_0})$.
    Note that the continuity of $F_{\phi}$ at $x_0$ does not depend on the choice of $U$ and $\phi$.
  \item We say that $(c,h)$ is continuous if $(c,h)$ is continuous at each $x_0 \in X$.
  \end{enumerate}
\end{definition}

We next consider families of unbounded operators.
Let $(c,h) = \big( \{c_x\}_{x \in X}, \{h_x\}_{x \in X} \big)$ be a family of $(c_x,h_x) \in \unboundedselfadjointoperators^{p,q}(\mathcal{H}_x)$ parametrized by $x \in X$.
Recall that $h_x$ is unbounded.
We define a map
\begin{equation}
  F_{\phi}^{\Riesz} \colon U \to \big( \boundedoperators(\mathcal{H}_{x_0}) \big)^{(q+1)+p+1}
\end{equation}
by
\begin{equation}
  x \mapsto \big( \phi^*_x(c_x(\epsilon_0)), \dots, \phi^*_x(c_x(\epsilon_q)), \phi^*_x(c_x(e_1)), \dots, \phi^*_x(c_x(e_p)), \phi^*_x(\Riesztransform(h_x)) \big).
\end{equation}

\begin{definition}\label{definition: continuity of unbounded (c,h)}
  Let $(c,h)$ be an unbounded family as above.
  \begin{enumerate}
  \item
    Let $x_0 \in X$.
    We say that $(c,h)$ is Riesz-continuous at $x_0$ if there exist an open neighborhood $U$ of $x_0$ and a local trivialisation $\phi$ of $\mathcal{H}$ over $U$ such that the map $F_{\phi}^{\Riesz} \colon U \to \big( \boundedoperators(\mathcal{H}_{x_0}) \big)^{(q+1)+p+1}$ defined as above is continuous at $x_0$ with respect to the norm topology on $\boundedoperators(\mathcal{H}_{x_0})$.
    Note that the continuity of $F_{\phi}^{\Riesz}$ at $x_0$ does not depend on the choice of $U$ and $\phi$ because we equip the structure group of a Hilbert bundle with the norm topology.
  \item We say that $(c,h)$ is Riesz-continuous if $(c,h)$ is Riesz-continuous at each $x_0 \in X$.
  \end{enumerate}
\end{definition}

Having established these preliminaries, we now introduce the following spaces, which will be used to define $K$-cocycles.
\begin{definition}
  Let $X$ be a compact Hausdorff space and $\mathcal{H} \to X$ a Hilbert bundle over $X$.
  \begin{enumerate}
    \item We denote by $\boundedselfadjointoperators^{p,q}(\mathcal{H})$ the set of all bounded families $(c,h) = \big( \{c_x\}_{x \in X}, \{h_x\}_{x \in X} \big)$ with $(c_x,h_x) \in \boundedselfadjointoperators^{p,q}(\mathcal{H}_x)$ for each $x \in X$ that are continuous in the sense defined above.
    \item We denote by $\unboundedselfadjointoperators^{p,q}(\mathcal{H})$ the set of all unbounded families $(c,h) = \big( \{c_x\}_{x \in X}, \{h_x\}_{x \in X} \big)$ with $(c_x,h_x) \in \unboundedselfadjointoperators^{p,q}(\mathcal{H}_x)$ for each $x \in X$ that are Riesz-continuous in the sense defined above.
  \end{enumerate}
\end{definition}

We define $K$-cocycles as follows.
\begin{definition}
  Let $p = 0,1,2,\dots$ and $q = -1,0,1,\dots$.
  Let $X$ be a compact Hausdorff space and $A \subset X$ a closed subset.
  We use the symbol $\bddorRiesz$ to stand for either ``$\mathrm{Riesz}$'' or ``$\mathrm{bounded}$''.
  We say that a triple $\alpha = (\mathcal{H},c,h)$ is a $K_{\bddorRiesz}^{p,q}$-cocycle or simply $K$-cocycle if the following four properties are satisfied:
  \begin{enumerate}
    \item $\mathcal{H}$ is a Hilbert bundle over $X$ with fibre a separable Hilbert space.
    \item If $\bddorRiesz = \bounded$, then $(c,h) \in \boundedselfadjointoperators^{p,q}(\mathcal{H})$, and if $\bddorRiesz = \Riesz$, then $(c,h) \in \unboundedselfadjointoperators^{p,q}(\mathcal{H})$.
    \item $h_x$ is Fredholm for each $x \in X$.
    \item $\Kernel h_a = \{0\}$ for each $a \in A$.
  \end{enumerate}
  We write $\alpha \in \Kcocycle_{\bddorRiesz}^{p,q}(X,A)$ if $\alpha$ is a $K_{\bddorRiesz}^{p,q}$-cocycle.
\end{definition}

\begin{remark}
  Recall our convention that the term ``unbounded operator'' is used to mean a possibly unbounded operator.
  Accordingly, a family $h=\{h_x\}$ may consist of bounded operators at some points and unbounded operators at others, but this causes no difficulty for the notion of continuity defined above.
  Recall also our convention that the term ``Hilbert space'' is understood to include finite-dimensional vector spaces equipped with a Hermitian inner product.
  Consequently, the fibres of a Hilbert bundle may be finite-dimensional or infinite-dimensional, possibly varying from one connected component to another.

  In the definition of the $K$-group, only the behaviour of the spectrum near zero is essential, and the Fredholm condition precisely ensures that the spectrum near zero consists of isolated eigenvalues with finite multiplicity.
  By contrast, the distinction between bounded and unbounded operators concerns the behaviour of the spectrum at infinity, far away from zero, and is therefore completely inessential from the viewpoint of $K$-theory.
\end{remark}

\begin{remark}
  We note that the collection of all $K_{\bddorRiesz}^{p,q}$-cocycles does not form a set; thus, the expression $\alpha \in \Kcocycle_{\bddorRiesz}^{p,q}(X,A)$ is just a notation, and $\Kcocycle_{\bddorRiesz}^{p,q}(X,A)$ is not a set but just a formal expression.
  We will discuss set-theoretic issues in Section~\ref{subsection: set theory}.
\end{remark}

We regard a $K$-cocycle $\alpha = (\mathcal{H},c,h)$ as a Hilbert bundle equipped with additional structure.
The notions of isomorphism, pullback, and direct sum for $K$-cocycles are defined as follows.

\begin{definition}\label{definition: isom, pullback, sum}
  Let $p = 0,1,2,\dots$ and $q = -1,0,1,\dots$.
  \begin{enumerate}
    \item
      Let $X$ be a compact Hausdorff space and $A \subset X$ a closed subset.
      Let $\alpha = (\mathcal{H},c,h), \alpha' = (\mathcal{H}',c',h')\in \Kcocycle_{\bddorRiesz}^{p,q}(X,A)$.
      An isomorphism $\phi \colon \alpha \to \alpha'$ is a Hilbert bundle isomorphism $\phi \colon \mathcal{H} \to \mathcal{H}'$ such that $c = \phi^* c'$ and $h = \phi^* h'$.
      If there exists such an isomorphism, we denote $\alpha \cong \alpha'$.
    \item
      Let $X,Y$ be compact Hausdorff spaces and $A \subset X$ and $B \subset Y$ closed subsets.
      Let $f \colon X \to Y$ be a continuous map such that $f(A) \subset B$.
      For $\alpha = (\mathcal{H},c,h) \in \Kcocycle_{\bddorRiesz}^{p,q}(Y,B)$, the pullback $f^* \alpha$ is defined to be the triple $(f^*\mathcal{H}, f^*c, f^*h)$.
      Note that $f^* \alpha \in \Kcocycle_{\bddorRiesz}^{p,q}(X,A)$.
    \item
      Let $X$ be a compact Hausdorff space and $A,A' \subset X$ closed subsets.
      Let $\alpha = (\mathcal{H},c,h) \in \Kcocycle_{\bddorRiesz}^{p,q}(X,A)$ and $\alpha' = (\mathcal{H}','c',h') \in \Kcocycle_{\bddorRiesz}^{p,q}(X,A')$.
      The direct sum $\alpha \oplus \alpha'$ is defined to be $(\mathcal{H} \oplus \mathcal{H}', c \oplus c', h \oplus h')$.
      Note that $\alpha \oplus \alpha' \in \Kcocycle_{\bddorRiesz}^{p,q}(X, A \cap A')$.
  \end{enumerate}
\end{definition}

\subsection{Set-theoretic techniques}\label{subsection: set theory}

If the collection of $\Kcocycle_{\bddorRiesz}^{p,q}$-cocycles were a set, one could introduce an equivalence relation on it and define $K_{\bddorRiesz}^{p,q}$-groups as the corresponding quotient.
Since this is not the case, we proceed as follows.
This is a standard and frequently used technique.
In this paper, since we restrict ourselves to Hilbert bundles whose fibres are separable, we do not use the notion of universes.

\begin{definition}
  Let $X$ be a compact Hausdorff space and $A \subset X$ a closed subset.
  \begin{enumerate}
    \item
      Let $S$ be a set.
      We denote by $\Hilbertbundles(S)$ the \emph{set} of all Hilbert bundles over $X$ with fibre a separable Hilbert space whose total spaces are subsets of $S$.
    \item
      Let $S$ be a set.
      We denote by $\Kcocycle_{\bddorRiesz, S}^{p,q}(X,A)$ a set
      \begin{equation}
        \{ (\mathcal{H},c,h) \in \Kcocycle_{\bddorRiesz}^{p,q}(X,A) \mid \mathcal{H} \in \Hilbertbundles(S) \}.
      \end{equation}
    \item
      We denote by $\cong_S$ the relation on the set $\Kcocycle_{\bddorRiesz, S}^{p,q}(X,A)$ obtained by restricting the isomorphism $\cong$ of $\Kcocycle_{\bddorRiesz}^{p,q}$-cocycles.
      Note that $\cong_S$ is an equivalence relation.
      For $\alpha_S \in \Kcocycle_{\bddorRiesz, S}^{p,q}(X,A)$, we denote its equivalence class by $(\alpha_S \modulosettheory{S})$.
  \end{enumerate}
\end{definition}

We denote by $\cardinality{S}$ the cardinality of a set $S$ and by $\cardinality{\mathcal{H}}$ the cardinality of the total space of a Hilbert bundle $\mathcal{H}$.
We denote by $\ell^2(\Z)$ the standard separable Hilbert space whose standard basis is parametrized by $\Z$.
\begin{lemma}\label{lemma: set theoretic lemma}
  Let $X$ be a compact Hausdorff space and $A,A' \subset X$ be closed subsets.
  Let $S$ and $T$ be sets such that $\cardinality{S}, \cardinality{T} \ge \cardinality{X \times \ell^2(\Z)}$.
  \begin{enumerate}
    \item
      For any $\alpha \in \Kcocycle_{\bddorRiesz}^{p,q}(X,A)$, there exists $\alpha_S \in \Kcocycle_{\bddorRiesz,S}^{p,q}(X,A)$ such that $\alpha_S \cong \alpha$.
      Moreover, the equivalence class $(\alpha_S \modulosettheory{S}) \in \big( \Kcocycle_{\bddorRiesz,S}^{p,q}(X,A) / {\cong_S} \big)$ depends only on $\alpha$ and $S$.
    \item
      There exists a unique map
      \begin{equation}
        \oplus_S \colon \big( \Kcocycle_{\bddorRiesz,S}^{p,q}(X,A) / {\cong_S} \big) \times \big( \Kcocycle_{\bddorRiesz,S}^{p,q}(X,A') / {\cong_S} \big) \to \big( \Kcocycle_{\bddorRiesz,S}^{p,q}(X,A \cap A') / {\cong_S} \big)
      \end{equation}
      such that, for any $\alpha_S \in \Kcocycle_{\bddorRiesz,S}^{p,q}(X,A)$, $\beta_S \in \Kcocycle_{\bddorRiesz,S}^{p,q}(X,A')$, and $\gamma_S \in \Kcocycle_{\bddorRiesz,S}^{p,q}(X,A \cap A')$, we have
      \begin{equation}
        (\alpha_S \modulosettheory{S}) \oplus_S (\beta_S \modulosettheory{S}) = (\gamma_S \modulosettheory{S})
      \end{equation}
      if and only if $\alpha_S \oplus \beta_S \cong \gamma_S$ as $\Kcocycle_{\bddorRiesz}^{p,q}$-cocycles.
    \item
      There exists a unique bijection
      \begin{equation}
        \Phi_{T,S} \colon \big( \Kcocycle_{\bddorRiesz,S}^{p,q}(X,A) / {\cong_S} \big) \to \big( \Kcocycle_{\bddorRiesz,T}^{p,q}(X,A) / {\cong_T} \big)
      \end{equation}
      such that, for any $\alpha_S \in \Kcocycle_{\bddorRiesz,S}^{p,q}(X,A)$ and $\alpha_T \in \Kcocycle_{\bddorRiesz,T}^{p,q}(X,A)$, we have
      \begin{equation}
        \Phi_{T,S} \big( (\alpha_S \modulosettheory{S}) \big) = (\alpha_T \modulosettheory{T})
      \end{equation}
      if and only if $\alpha_S \cong \alpha_T$ as $\Kcocycle_{\bddorRiesz}^{p,q}$-cocycles.
    \item
      $\oplus_S$ and $\oplus_T$ are compatible with $\Phi_{T,S}$, that is, we have
      \begin{equation}
        \Phi_{T,S} \big( (\alpha_S \modulosettheory{S}) \oplus_S (\beta_S \modulosettheory{S}) \big) = \big( \Phi_{T,S} \big( (\alpha_S \modulosettheory{S}) \big) \big) \oplus_T \big( \Phi_{T,S} \big( (\beta_S \modulosettheory{S}) \big) \big)
      \end{equation}
      for any $\alpha_S \in \Kcocycle_{\bddorRiesz,S}^{p,q}(X,A)$ and $\beta_S \in \Kcocycle_{\bddorRiesz,S}^{p,q}(X,A')$.
  \end{enumerate}
\end{lemma}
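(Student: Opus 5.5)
The whole lemma rests on one transport principle. If a set $S$ satisfies $\cardinality{S} \ge \cardinality{X \times \ell^2(\Z)}$, then every Hilbert bundle $\mathcal{H}$ over $X$ with separable fibres can be replaced by an isomorphic Hilbert bundle whose total space is a subset of $S$. The first step is to check that $\cardinality{\mathcal{H}} \le \cardinality{X \times \ell^2(\Z)}$. Each fibre $\mathcal{H}_x$ is a separable Hilbert space, finite-dimensional or isomorphic to $\ell^2(\Z)$. A finite-dimensional fibre $\C^n$ has cardinality at most that of $\ell^2(\Z)$, since $\C^n \hookrightarrow \ell^2(\Z)$ as a set. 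Hence
\[
\cardinality{\mathcal{H}} = \cardinality{\textstyle\bigsqcup_x \mathcal{H}_x} \le \cardinality{X \times \ell^2(\Z)} \le \cardinality{S}.
\]
So there is an injection $j \colon \mathcal{H} \to S$. We transport the whole structure along $j$: topology, projection to $X$, the fibrewise linear and Hilbert structure, and the local trivialisations. This gives a Hilbert bundle $\mathcal{H}_S \in \Hilbertbundles(S)$ and a tautological bundle isomorphism $j \colon \mathcal{H} \to \mathcal{H}_S$. Pushing $c$ and $h$ forward along $j$ gives $\alpha_S := (\mathcal{H}_S, j_*c, j_*h)$, and $j$ is then an isomorphism $\alpha \cong \alpha_S$ in the sense of Definition~\ref{definition: isom, pullback, sum}. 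All the defining conditions of a $K^{p,q}_{\bddorRiesz}$-cocycle are invariant under isomorphism: continuity or Riesz-continuity (via composed trivialisations), the Fredholm property, and invertibility over $A$. This proves existence in (1). The class $(\alpha_S \modulosettheory{S})$ depends only on $\alpha$ and $S$, because any two such $\alpha_S, \alpha'_S$ satisfy $\alpha_S \cong \alpha \cong \alpha'_S$, and $\cong_S$ is the restriction of $\cong$.

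For (2), given representatives $\alpha_S, \beta_S$, the direct sum $\alpha_S \oplus \beta_S$ is a cocycle on $(X, A\cap A')$. Its total space, however, need not lie in $S$. We therefore apply (1) to it and define
\[
(\alpha_S \modulosettheory{S}) \oplus_S (\beta_S \modulosettheory{S}) := \big((\alpha_S \oplus \beta_S)_S \modulosettheory{S}\big).
\]
This is well defined because $\oplus$ of cocycles respects isomorphism: if $\phi \colon \alpha_S \to \alpha'_S$ and $\psi \colon \beta_S \to \beta'_S$ are isomorphisms, then $\phi \oplus \psi$ is an isomorphism of the sums. The ``if and only if'' characterisation holds by construction and transitivity of $\cong$. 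Uniqueness is immediate, since the characterisation pins down the value on every pair of classes.

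For (3), define $\Phi_{T,S}\big((\alpha_S \modulosettheory{S})\big) := ((\alpha_S)_T \modulosettheory{T})$ using (1) with $T$ in place of $S$. This is well defined because $(\alpha_S)_T$ depends only on the isomorphism class of $\alpha_S$. By symmetry we get $\Phi_{S,T}$, and both composites are the identity: $((\alpha_S)_T)_S \cong \alpha_S$, so the classes agree. Hence $\Phi_{T,S}$ is a bijection, and the iff property and uniqueness follow as in (2). For (4), both sides are represented by $T$-replacements of $\alpha_S \oplus \beta_S$. The left side is $T$-equivalent to $(\alpha_S \oplus \beta_S)_S$ transported to $T$. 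The right side is a replacement of $(\alpha_S)_T \oplus (\beta_S)_T \cong \alpha_S \oplus \beta_S$. The two are therefore isomorphic as cocycles, hence equal in the quotient by $\cong_T$.

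The only step that is not pure bookkeeping is the cardinality bound and the transport of structure in (1). In particular, the bound must correctly handle fibres of varying (possibly finite) dimension on different components, and the transported bundle must still have norm-topology structure group. I expect this to go through because everything is transported along a bijection onto $j(\mathcal{H}) \subset S$.
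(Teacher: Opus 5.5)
Your proposal is correct and follows the same route as the paper: bound the cardinality of the total space of $\mathcal{H}$, embed it into $S$, and transport all structure along the injection to get (1); then (2)--(4) follow formally from (1). Your version supplies the bookkeeping the paper calls immediate. Your inequality $\cardinality{\mathcal{H}} \le \cardinality{X \times \ell^2(\Z)}$ is also the correct form of the paper's stated equality, which fails for finite-dimensional fibres (e.g.\ the zero bundle over a point).
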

\begin{proof}
  Let $\alpha \in \Kcocycle_{\bddorRiesz}^{p,q}(X,A)$.
  Since $\cardinality{S} \ge \cardinality{X \times \ell^2(\Z)} = \cardinality{\mathcal{H}}$, there exists an injection from the total space of $\mathcal{H}$ into $S$.
  This implies the first assertion.
  The remaining assertions follow immediately from the first one.
\end{proof}

The existence of the bijection $\Phi_{T,S}$ in the third assertion of Lemma~\ref{lemma: set theoretic lemma} suggests, at an informal level, that the set $\big( \Kcocycle_{\bddorRiesz,S}^{p,q}(X,A) / {\cong_S} \big)$ does not depend on the choice of $S$ provided that $\cardinality{S} \ge \cardinality{X \times \ell^2(\Z)}$.
To make this observation precise, we fix such a set $S$ for each $X$ as follows.
\begin{definition}
  Let $X$ be a compact Hausdorff space and $A,A' \subset X$ closed subsets.
  Set $S_X := X \times \ell^2(\Z)$.
  \begin{enumerate}
    \item
      We define
      \begin{equation}
        \big( \Kcocycle_{\bddorRiesz}^{p,q}(X,A)  / {\cong} \big) := \big( \Kcocycle_{\bddorRiesz,S_X}^{p,q}(X,A)  / {\cong_{S_X}} \big).
      \end{equation}
      Note that $\big( \Kcocycle_{\bddorRiesz}^{p,q}(X,A)  / {\cong} \big)$ is just a formal expression.
    \item
      We simply write $\oplus$ for $\oplus_{S_X}$, that is,
      \begin{equation}
        \oplus := \oplus_{S_X} \colon \big( \Kcocycle_{\bddorRiesz}^{p,q}(X,A) / {\cong} \big) \times \big( \Kcocycle_{\bddorRiesz}^{p,q}(X,A') / {\cong} \big) \to \big( \Kcocycle_{\bddorRiesz}^{p,q}(X,A \cap A') / {\cong} \big).
      \end{equation}
  \end{enumerate}
\end{definition}
\begin{definition}
  Let $X,Y$ be compact Hausdorff spaces and $A \subset X$ and $B \subset Y$ closed subsets.
  Let $f \colon X \to Y$ be a continuous map such that $f(A) \subset B$.
  We define a map
  \begin{equation}
    f^* \colon \big( \Kcocycle_{\bddorRiesz}^{p,q}(Y,B) / {\cong} \big) \to \big( \Kcocycle_{\bddorRiesz}^{p,q}(X,A) / {\cong} \big)
  \end{equation}
  by the pullback defined in Definition~\ref{definition: isom, pullback, sum}, which is well defined by Lemma~\ref{lemma: set theoretic lemma}.
  This map $f^*$ is characterized by the property that
  \begin{equation}
    f^*(\beta \modulosettheory{S_Y}) = (\alpha \modulosettheory{S_X})
  \end{equation}
  if and only if $f^* \beta \cong \alpha$ for any $\alpha \in \Kcocycle_{\bddorRiesz}^{p,q}(X,A)$ and $\beta \in \Kcocycle_{\bddorRiesz}^{p,q}(Y,B)$.
\end{definition}

For $\alpha \in \Kcocycle_{\bddorRiesz}^{p,q}(X,A)$, by the first assertion of Lemma~\ref{lemma: set theoretic lemma}, the equivalence class $\alpha_S \modulosettheory{S}$ depends only on $\alpha$; thus, by a slight abuse of language, we also write
\begin{equation}
  (\alpha \modulosettheory{}) := (\alpha_S \modulosettheory{S}) \in \big( \Kcocycle_{\bddorRiesz,S}^{p,q}(X,A)  / {\cong_S} \big),
\end{equation}
although, strictly speaking, $(\alpha \modulosettheory{})$ is just a formal expression.
Then, we can rephrase the definitions above simply as follows:
\begin{itemize}
  \item
    We define
    \begin{equation}
      (\alpha \modulosettheory{}) \oplus (\alpha' \modulosettheory{}) := (\alpha \oplus \alpha' \modulosettheory{}) \in \big( \Kcocycle_{\bddorRiesz}^{p,q}(X,A \cap A') / {\cong} \big)
    \end{equation}
    for any $\alpha \in \Kcocycle_{\bddorRiesz}^{p,q}(X,A)$ and $\alpha' \in \Kcocycle_{\bddorRiesz}^{p,q}(X,A')$.
  \item
    We define
    \begin{equation}
      f^*(\beta \modulosettheory{}) := \big( (f^*\beta) \modulosettheory{} \big) \in \big( \Kcocycle_{\bddorRiesz}^{p,q}(X,A)  / {\cong} \big)
    \end{equation}
    for any $\beta \in \Kcocycle_{\bddorRiesz}^{p,q}(Y,B)$.
\end{itemize}

\subsection{Definition of the semigroup \texorpdfstring{$K_{\bddorRiesz}^{p,q}$}{Kpq}}

With these preparations in place, we are finally in a position to define the semigroup $K_{\bddorRiesz}^{p,q}$.
Let $X$ be a compact Hausdorff space and $A \subset X$ a closed subset.
Let $i_0 \colon X \to X \times [0,1]$ and $i_1 \colon X \to X \times [0,1]$ be given by $i_t(x) = (x,t)$ for $t = 0,1$.
We use the symbol $\bddorRiesz$ to stand for either ``$\mathrm{Riesz}$'' or ``$\mathrm{bounded}$''.
Let $p = 0,1,2,\dots$ and $q=-1,0,1,\dots$.

We begin by defining a relation $\sim$ on the set $\big( \Kcocycle_{\bddorRiesz}^{p,q}(X,A) / {\cong} \big)$.
For $\alpha_0, \alpha_1 \in \Kcocycle_{\bddorRiesz}^{p,q}(X,A)$, we say that
\begin{equation}
  (\alpha_0 \modulosettheory{}) \sim (\alpha_1 \modulosettheory{})
\end{equation}
if and only if there exist $\beta_0, \beta_1 \in \Kcocycle_{\bddorRiesz}^{p,q}(X,X)$ and $\widetilde{\alpha} \in \Kcocycle_{\bddorRiesz}^{p,q}(X \times [0,1], A \times [0,1])$ such that
\begin{align}
  (\alpha_0 \modulosettheory{}) \oplus (\beta_0 \modulosettheory{}) & = i_0^* (\widetilde{\alpha} \modulosettheory{}) \\
  (\alpha_1 \modulosettheory{}) \oplus (\beta_1 \modulosettheory{}) & = i_1^* (\widetilde{\alpha} \modulosettheory{}).
\end{align}
It can be shown by standard arguments that $\sim$ is an equivalence relation.
Using $\sim$, we define the semigroup $K_{\bddorRiesz}^{p,q}$ as follows.
\begin{definition}
  We define
  \begin{equation}
    K_{\bddorRiesz}^{p,q}(X,A) := \big( \Kcocycle_{\bddorRiesz}^{p,q}(X,A) / {\cong} \big) / {\sim},
  \end{equation}
  and we write
  \begin{equation}
    [\alpha] := (\alpha \modulosettheory{}) / {\sim}
  \end{equation}
  for $\alpha \in \Kcocycle_{\bddorRiesz}^{p,q}(X,A)$.
\end{definition}

By definition, every element of $K_{\bddorRiesz}^{p,q}(X,A)$ is of the form $[\alpha]$ for some $\alpha \in \Kcocycle_{\bddorRiesz}^{p,q}(X,A)$.
Moreover, for $\alpha_0, \alpha_1 \in \Kcocycle_{\bddorRiesz}^{p,q}(X,A)$, we have $[\alpha_0] = [\alpha_1] \in K_{\bddorRiesz}^{p,q}(X,A)$ if and only if there exist $\beta_0, \beta_1 \in \Kcocycle_{\bddorRiesz}^{p,q}(X,X)$ and $\widetilde{\alpha} \in \Kcocycle_{\bddorRiesz}^{p,q}(X \times [0,1], A \times [0,1])$ such that $\alpha_0 \oplus \beta_0 \cong i_0^* \widetilde{\alpha}$ and $\alpha_1 \oplus \beta_1 \cong i_1^* \widetilde{\alpha}$.

By abuse of language, we call $\alpha_t \oplus \beta_t$ a stabilization of $\alpha_t$ for $t = 0,1$, and we say that $i_0^* \widetilde{\alpha}$ and $i_1^* \widetilde{\alpha}$ are homotopic.
With this convention, for $\alpha_0, \alpha_1 \in \Kcocycle_{\bddorRiesz}^{p,q}(X,A)$, we have $[\alpha_0] = [\alpha_1]$ if and only if they are homotopic after stabilization, in a way analogous to the standard construction.

The direct sum $\oplus$ on $\big( \Kcocycle_{\bddorRiesz}^{p,q}(X,A) / {\cong} \big)$ is clearly compatible with the equivalence relation $\sim$, and hence induces a commutative semigroup structure $+$ on $K_{\bddorRiesz}^{p,q}(X,A)$ by
\begin{equation}
  [\alpha] + [\alpha'] := [\alpha \oplus \alpha']
\end{equation}
for any $\alpha, \alpha' \in \Kcocycle_{\bddorRiesz}^{p,q}(X,A)$.
The identity is described as follows.
Let $\mathcal{O}_X$ be the trivial product Hilbert bundle $X \times \{0\}$.
For $(\mathcal{H},c,h) \in \Kcocycle_{\bddorRiesz}^{p,q}(X,A)$, if $\mathcal{H} = \mathcal{O}_X$, then $c$ and $h$ are uniquely determined.
We have
\begin{equation}
  [\alpha] + [\mathcal{O}_X,c,h] = [\alpha] \in K_{\bddorRiesz}^{p,q}(X,A)
\end{equation}
for any $\alpha \in \Kcocycle_{\bddorRiesz}^{p,q}(X,A)$.
We write $0$ for $[\mathcal{O}_X,c,h]$.

\subsection{Group structure on the semigroup \texorpdfstring{$K_{\bddorRiesz}^{p,q}$}{Kpq}}\label{subsection: group structure}

Let $X$ be a compact Hausdorff space and $A \subset X$ a closed subset.
In this subsection, we show that the abelian semigroup $K_{\bddorRiesz}^{p,q}(X,A)$ defined so far is in fact a group.

Let $p = 0,1,2,\dots$ and $q = -1,0,1,\dots$.
Recall that the Clifford algebra $\cliffordalgebra^{q+1,p}$ is the $\ast$-algebra over $\R$ generated by $\{\epsilon_0, \dots, \epsilon_q, e_1, \dots, e_p \}$ subject to the relations
\begin{equation}
  \anticommutator{\epsilon_k}{\epsilon_{k'}} = 2 \delta_{k,k'}, \quad \anticommutator{e_l}{e_{l'}} = -2 \delta_{l,l'}, \quad
  \anticommutator{\epsilon_k}{e_l} = 0, \quad
  \epsilon_k^* = \epsilon_k, \quad e_l^* = - e_{l}.
\end{equation}
We define a graded involution $\cliffordGamma \colon \cliffordalgebra^{q+1,p} \to \cliffordalgebra^{q+1,p}$ by
\begin{equation}
  \cliffordGamma(\epsilon_k) = -\epsilon_k, \quad \cliffordGamma(e_l) = -e_l
\end{equation}
for $k=0,\dots,q$ and $l = 1,\dots, p$.
For a $\ast$-algebra $\boundedoperators$ and a $\ast$-homomorphism $c \colon \cliffordalgebra^{q+1,p} \to \boundedoperators$, we define $-c := c \circ \cliffordGamma$; that is, $-c$ is a $\ast$-homomorphism characterized by
\begin{equation}
  (-c)(\epsilon_k) := - \big( c(\epsilon_k) \big), \quad (-c)(e_l) := - \big( c(e_l) \big)
\end{equation}
for $k=0,\dots,q$ and $l = 1,\dots, p$.
Note that $-c$ is not the pointwise negative of $c$; rather, the sign depends on the grading.
We define
\begin{equation}
  -\alpha := (\mathcal{H}, -c, -h) \in \Kcocycle_{\bddorRiesz}^{p,q}(X,A)
\end{equation}
for $\alpha = (\mathcal{H},c,h) \in \Kcocycle_{\bddorRiesz}^{p,q}(X,A)$.

The following proposition is not only a basic property of $K_{\bddorRiesz}^{p,q}(X,A)$ but also serves as a prototype for Proposition~\ref{proposition: key proposition}, the key argument of this paper.
Accordingly, we include a somewhat detailed explanation.
Such a complicated argument is required because the seemingly trivial continuity of the addition operator
\begin{equation}
  \unboundedselfadjointoperators \times \boundedselfadjointoperators \to \unboundedselfadjointoperators, \quad (A,B) \mapsto A+B
\end{equation}
becomes unexpectedly delicate when the space of unbounded selfadjoint operators $\unboundedselfadjointoperators$ is equipped with the Riesz topology, owing to the nonlinearity of the Riesz transform.
By contrast, arguments are much simpler if one works with the gap topology.
However, working with the gap topology would be somewhat excessive for our purposes, and therefore we choose in this paper to work with the more familiar Riesz topology.

\begin{proposition}\label{proposition: prototype}
  We have
  \begin{equation}
    [\alpha] + [-\alpha] = 0
  \end{equation}
  in $K_{\bddorRiesz}^{p,q}(X,A)$ for any $\alpha \in \Kcocycle_{\bddorRiesz}^{p,q}(X,A)$.
  In particular, $K_{\bddorRiesz}^{p,q}(X,A)$ is a group.
\end{proposition}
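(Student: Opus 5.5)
We have to show that $\alpha \oplus (-\alpha) = (\mathcal{H}\oplus\mathcal{H}, c\oplus(-c), h\oplus(-h))$ is homotopic, after stabilization, to a cocycle with everywhere trivial kernel. Such a cocycle lies in $\Kcocycle^{p,q}_{\bddorRiesz}(X,X)$, and it is a standard check that its class is $0$ (homotope $h$ to $h/\abs{h}$, which stays invertible, and absorb it as a stabilizing term). The idea is to rotate off the diagonal using an extra anticommuting operator. On $\mathcal{H}\oplus\mathcal{H}$ set
\begin{equation}
  J := \begin{pmatrix} 0 & -1 \\ 1 & 0 \end{pmatrix}\otimes \id , \qquad
  \widehat{c} := \begin{pmatrix} c & 0 \\ 0 & -c \end{pmatrix}, \qquad
  \widehat{h} := \begin{pmatrix} h & 0 \\ 0 & -h \end{pmatrix}.
\end{equation}
Then $\sqrt{-1}\,J$ is selfadjoint with square $\id$, and it anticommutes with every $\widehat{c}(\epsilon_k)$, every $\widehat{c}(e_l)$ and with $\widehat{h}$; all of this is a direct $2\times 2$ computation. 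The homotopy is
\begin{equation}
  h_s := \widehat{h} + s\,\sqrt{-1}J, \qquad s\in[0,1].
\end{equation}
This family still satisfies $\heartsuit$ with $\widehat{c}$ for each $s$. Because $\widehat{h}$ and $\sqrt{-1}J$ anticommute,
\begin{equation}
  h_s^2=\widehat{h}^2+s^2 .
\end{equation}
Hence $h_s$ is Fredholm for all $s$, has trivial kernel on $A$ (where $\Ker h=0$) for all $s$, and is invertible everywhere at $s=1$. So $h_0$ with stabilizer $\beta_0=0$ and $h_1\in\Kcocycle(X,X)$ give $[\alpha]+[-\alpha]=[h_1]=0$.

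For $\bddorRiesz=\bounded$ the continuity of $(x,s)\mapsto h_s$ in norm is obvious, and we are done. The main obstacle is the Riesz case: we must show $(x,s)\mapsto \Riesztransform(h_{s,x})$ is norm continuous when $x\mapsto\Riesztransform(h_x)$ is. Here the anticommutation again does the work. Writing $k=\widehat{h}$,
\begin{equation}
  \Riesztransform(h_s)=(k+s\sqrt{-1}J)(1+k^2+s^2)^{-1/2}
  = \Riesztransform(k)\, f_s(k^2) + s\sqrt{-1}J(1+k^2+s^2)^{-1/2},
\end{equation}
where $f_s(\lambda)=\sqrt{(1+\lambda)/(1+\lambda+s^2)}$.

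The key step is to express both factors as continuous functions of the bounded operator $T=\Riesztransform(k)$. Since $1-T^2=(1+k^2)^{-1}$, we have $k^2=(1-T^2)^{-1}-1$, and therefore
\begin{equation}
  (1+k^2+s^2)^{-1/2}=\Bigl(\tfrac{1-T^2}{1+s^2(1-T^2)}\Bigr)^{1/2}=g_s(T^2),
\end{equation}
with $g_s(\mu)=\sqrt{(1-\mu)/(1+s^2(1-\mu))}$ jointly continuous on $[0,1]\times[0,1]$. Similarly $f_s(k^2)=(1+s^2(1-T^2))^{-1/2}$. Continuous functional calculus is norm continuous jointly in the operator and in a uniformly continuous parameter family of functions on $[-1,1]$. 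This gives joint norm continuity of $(x,s)\mapsto\Riesztransform(h_{s,x})$ in local trivializations, using that the trivializations act by unitary conjugation, which commutes with functional calculus. The $\widehat{c}$ part is constant in $s$, so $(\widehat{c},h_s)$ defines the required cocycle $\widetilde{\alpha}$ on $X\times[0,1]$ relative to $A\times[0,1]$, and the proposition follows. The group statement is then immediate, with $[-\alpha]$ as the inverse.
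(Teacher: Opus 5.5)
Your proof is correct and follows the paper's approach: the paper also uses $\widetilde{h}_{(x,t)} = \begin{pmatrix} h_x & t \\ t & -h_x \end{pmatrix}$ with $\widetilde{c} = c \oplus (-c)$ and the identity $\widetilde{h}^2 = h^2 + t^2$. It gets Riesz continuity by writing $\Riesztransform(\widetilde{h}_{(x,t)}) = \phi(\Riesztransform(h_x),t)$ for a continuous matrix-valued $\phi$, and it concludes directly with $\alpha_1 = 0$, $\beta_1 = i_1^*\widetilde{\alpha}$, so your detour through $h/\abs{h}$ is unnecessary. The only cosmetic difference is that the paper's off-diagonal term is the real matrix $\begin{pmatrix} 0 & 1 \\ 1 & 0 \end{pmatrix}$ rather than your $\sqrt{-1}J$, which has the small advantage of carrying over verbatim to the $KO$ setting.
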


\begin{proof}
  Let $\alpha = (\mathcal{H},c,h) \in \Kcocycle_{\bddorRiesz}^{p,q}(X,A)$.
  We denote by $\pi \colon X \times [0,1] \to X$ the projection onto $X$.
  We define the triple $\widetilde{\alpha} = (\widetilde{\mathcal{H}},\widetilde{c},\widetilde{h})$ on $X \times [0,1]$ by setting
  \begin{equation}
    \widetilde{\mathcal{H}} := \pi^* \mathcal{H} \otimes_{\R} \R^2, \quad
    \widetilde{c}_{(x,t)} = \begin{pmatrix}
      c_x & 0 \\
      0 & -c_x
    \end{pmatrix}, \quad
    \widetilde{h}_{(x,t)} = \begin{pmatrix}
      h_x & t \\
      t & -h_x
    \end{pmatrix}
  \end{equation}
  for $(x,t) \in X \times [0,1]$.
  We first show that $\widetilde{\alpha} \in \Kcocycle_{\bddorRiesz}^{p,q}(X \times [0,1], A \times [0,1])$; that is, in either case $\bddorRiesz = \bounded \text{ or } \Riesz$, we verify that $\widetilde{c}$ and $\widetilde{h}$ depend continuously on $(x,t) \in X \times [0,1]$, that $\widetilde{h}_{(x,t)}$ is Fredholm for all $(x,t) \in X \times [0,1]$ and that $\Kernel \widetilde{h}_{(x,t)} = \{0\}$ for each $(x,t) \in A \times [0,1]$.

  The continuity of $\widetilde{c}$ follows directly from that of $c$.
  We show the continuity of $\widetilde{h}$ as follows:
  \begin{description}
    \item[$\bddorRiesz = \bounded$ case]
      The continuity of $\widetilde{h}$ follows directly from that of $h$.
    \item[$\bddorRiesz = \Riesz$ case]
      Recall that the Riesz transform is defined via $\Riesztransform(\lambda) = \lambda / \sqrt{1+\lambda^2}$.
      We have
      \begin{equation}
        \Riesztransform(\widetilde{h}_{(x,t)}) = \frac{\widetilde{h}_{(x,t)}}{\sqrt{1+\widetilde{h}_{(x,t)}^2}} = \frac{1}{\sqrt{1+t^2+h_x^2}}
        \begin{pmatrix}
          h_x & t \\
          t & -h_x
        \end{pmatrix}
      \end{equation}
      for each $(x,t) \in X \times [0,1]$.
      Then, we have, as $\lambda \to \pm \infty$,
      \begin{equation}
        \frac{1}{\sqrt{1+t^2+\lambda^2}} \begin{pmatrix}
          \lambda & t \\
          t & -\lambda
        \end{pmatrix}
        \longrightarrow \begin{pmatrix}
          \pm 1 & 0 \\
          0 & \mp 1
        \end{pmatrix}
      \end{equation}
      uniformly with respect to $t \in [0,1]$.
      Therefore, there exists a continuous map $\phi \colon [-1,1] \times [0,1] \to \matrixalgebra_2(\R)$ such that
      \begin{equation}
        \frac{1}{\sqrt{1+t^2+\lambda^2}} \begin{pmatrix}
          \lambda & t \\
          t & -\lambda
        \end{pmatrix} = \phi \big( \Riesztransform(\lambda), t \big)
      \end{equation}
      for each $(\lambda,t) \in \R \times [0,1]$.
      Consequently, we have
      \begin{equation}
        \Riesztransform(\widetilde{h}_{(x,t)}) = \phi \big( \Riesztransform(h_x), t \big)
      \end{equation}
      for each $(x,t) \in X \times [0,1]$.
      Thus, $\widetilde{h}$ depends continuously on $(x,t) \in X \times [0,1]$.
  \end{description}

  We next show that $\widetilde{h}_{(x,t)}$ is Fredholm for $(x,t) \in X \times [0,1]$ and that $\Kernel \widetilde{h}_{(x,t)} = \{0\}$ for $(x,t) \in A \times [0,1]$.
  Fix $(x,t) \in X \times [0,1]$.
  For $t \ne 0$, we have
  \begin{equation}
    \widetilde{h}_{(x,t)}^2 = \begin{pmatrix}
      h_x^2 + t^2 & 0 \\
      0 & h_x^2 + t^2
    \end{pmatrix} > 0.
  \end{equation}
  Hence, $\widetilde{h}_{(x,t)}$ is invertible.
  In particular, $\widetilde{h}_{(x,t)}$ is Fredholm.
  For $t = 0$, we have $\widetilde{h}_{(x,0)} = h_x \oplus (-h_x)$ so the Fredholm property of $\widetilde{h}_{(x,0)}$ and $\Kernel \widetilde{h}_{(x,0)} = \{0\}$ for $x \in A$ follow.

  Now we define
  \begin{gather}
    \alpha_0 := \alpha \oplus (-\alpha) \in \Kcocycle_{\bddorRiesz}^{p,q}(X,A), \quad \alpha_1 := 0 \in \Kcocycle_{\bddorRiesz}^{p,q}(X,A), \\
    \beta_0 := 0 \in \Kcocycle_{\bddorRiesz}^{p,q}(X,X), \quad \beta_1 := (\mathcal{H} \otimes_{\R} \R^2, c_1, h_1) \in \Kcocycle_{\bddorRiesz}^{p,q}(X,X),
  \end{gather}
  where
  \begin{equation}
    (c_1)_x := \begin{pmatrix}
      c_x & 0 \\
      0 & c_x
    \end{pmatrix}, \quad
    (h_1)_x := \begin{pmatrix}
      h_x & 1 \\
      1 & -h_x
    \end{pmatrix}
  \end{equation}
  for each $x \in X$.
  Then, we have
  \begin{equation}
    \alpha_0 \oplus \beta_0 \cong i_0^* \widetilde{\alpha}
    \text{ and }
    \alpha_1 \oplus \beta_1 \cong i_1^* \widetilde{\alpha}.
  \end{equation}
  Consequently, we have $[\alpha \oplus (-\alpha)] = [0]$.
  Thus, we have proved that $[\alpha] + [-\alpha] = 0$.
\end{proof}

\subsection{Degrees}

In this subsection, we explain how to assign degrees to $K$-groups,
thereby completing their definition.

\begin{proposition}
  Let $X$ be a compact Hausdorff space and $A \subset X$ a closed subset.
  There is a natural isomorphism
  \begin{equation}
    K_{\bddorRiesz}^{p,q}(X,A) \to K_{\bddorRiesz}^{p+1,q+1}(X,A)
  \end{equation}
  for each $p = 0,1,2,\dots$ and $q=-1,0,1,\dots$.
\end{proposition}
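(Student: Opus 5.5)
The natural isomorphism comes from the standard Clifford periodicity trick: tensor with the irreducible graded module of $\cliffordalgebra^{1,1}$. Concretely, given $\alpha = (\mathcal{H},c,h) \in \Kcocycle_{\bddorRiesz}^{p,q}(X,A)$, I would set $\mathcal{H}' := \mathcal{H} \otimes_{\R} \R^2$ and $h' := h \otimes \Gamma$, with $\Gamma,\epsilon,c$ the $2\times 2$ matrices of \eqref{eq: clifford generators}. The new generators are:
\begin{itemize}
\item the old ones, $c'(\epsilon_k) := c(\epsilon_k) \otimes \Gamma$ and $c'(e_l) := c(e_l)\otimes \Gamma$;
\item one new $\epsilon_{q+1}$ and one new $e_{p+1}$, namely $c'(\epsilon_{q+1}) := \id \otimes \epsilon$ and $c'(e_{p+1}) := \id \otimes c$.
\end{itemize}
One checks directly that these satisfy the relations of $\cliffordalgebra^{q+2,p+1}$. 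The new generators anticommute with $\Gamma$; the old ones anticommute with each other and with the new ones because $\Gamma$ squares to $1$ and anticommutes with $\epsilon, c$. They also anticommute with $h'$: for the new ones this is because $\epsilon$ and $c$ anticommute with $\Gamma$, and for the old ones it follows from the $\heartsuit$ condition. Since $h'^2 = h^2 \otimes 1$, $h'$ is Fredholm, and it is injective exactly where $h$ is. Riesz-continuity is also immediate, because $\Riesztransform(h') = \Riesztransform(h)\otimes\Gamma$; this is the one point where the unbounded setting needs a check, and here it is trivial. The construction commutes with isomorphism, pullback and direct sum, and it sends homotopies over $X\times[0,1]$ to homotopies. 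It therefore descends to a well-defined, natural homomorphism $K_{\bddorRiesz}^{p,q}(X,A)\to K_{\bddorRiesz}^{p+1,q+1}(X,A)$.

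For the inverse, take $(\mathcal{H}',c',h')$ of type $(p+1,q+1)$ and put $\omega := c'(\epsilon_{q+1})c'(e_{p+1})$. Then $\omega^2 = 1$ and $\omega^*=\omega$, and $\omega$ commutes with $h'$ and with all the other generators. Let $\mathcal{H}_+ := \Kernel(\omega - 1)$, which is a Hilbert subbundle because $\omega$ is a norm-continuous family of projections up to affine change. Let $h$ be the restriction of $h'$ to $\mathcal{H}_+$. For the Clifford action, multiply each old generator by the grading $\epsilon_{q+1}$-type element so that it preserves $\mathcal{H}_+$: for instance $c(\epsilon_k) := c'(\epsilon_k) c'(\epsilon_{q+1}) \cdot (\text{suitable factor})$. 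In fact, the factor should be chosen so that one recovers exactly the inverse of the formula above under the identification $\mathcal{H}'\cong\mathcal{H}_+\otimes\R^2$ given by $v\otimes(a,b)\mapsto av + b\,c'(\epsilon_{q+1})v$. The core fact is that the irreducible $\cliffordalgebra^{1,1}$-module is unique up to isomorphism, so every $(p+1,q+1)$ cocycle is canonically of the form $\mathcal{H}_+\otimes\R^2$ with the new generators acting on the second factor. The main point to verify is that under this identification $h'$ and the old generators take the product form used in the forward map. This follows by writing the old generators and $h'$ as $2\times2$ block matrices that commute with $\omega$ and anticommute with the two new generators.

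Both compositions are then isomorphisms of cocycles, naturally in $\alpha$, so they are the identity on $K$-groups. The main (mild) obstacle is the bookkeeping: pinning down the sign conventions so that the reconstructed $c$ satisfies $c(\epsilon_k)^2=1$, $c(e_l)^2=-1$ and anticommutes with $h$. I would handle this by writing everything in the block decomposition with respect to $\omega$ and $c'(\epsilon_{q+1})$, and fixing the required products of generators explicitly there.
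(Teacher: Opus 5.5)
Your route is the paper's: the forward map $\delta$ tensors with the $\cliffordalgebra^{1,1}$-module $\R^2$, and the inverse $\rho$ restricts to $\Kernel(\omega-1)$ with $\omega=c'(\epsilon_{q+1})c'(e_{p+1})$. Your forward map is correct. It is the paper's $\delta$ with $\Gamma=\mathrm{diag}(1,-1)$ in the slot where the paper prints $\epsilon_{\R^2}$ as the identity matrix. With the identity there, the new generators would commute rather than anticommute with the old ones and with $h'$, so $\Gamma$ is what the construction actually needs.

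The inverse as you sketch it fails as stated, in two concrete places, though both are easy to repair.

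\textbf{No multiplication of the old generators.} $c'(\epsilon_k)$, $c'(e_l)$ and $h'$ anticommute with both new generators, hence commute with $\omega$. They therefore already preserve $\Kernel(\omega-1)$, and $\rho$ is plain restriction, as in the paper. The product $c'(\epsilon_k)c'(\epsilon_{q+1})$ you propose has three defects:
\begin{itemize}
\item it anticommutes with $\omega$, so it maps $\Kernel(\omega-1)$ into $\Kernel(\omega+1)$;
\item it squares to $-1$;
\item it commutes with $h'$.
\end{itemize}
So it destroys exactly the properties you were worried about.

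\textbf{A sign error.} With your choice $c'(e_{p+1})=\id\otimes c$ one gets $\omega=\id\otimes\epsilon c=-\id\otimes\Gamma$. Hence $\Kernel(\omega-1)=\mathcal{H}\otimes\Kernel(\Gamma+1)$, and there the old generators and $h'$ restrict to $-c$ and $-h$. So $\rho\circ\delta(\alpha)\cong-\alpha$, not $\alpha$. Correspondingly, your identification $v\otimes(a,b)\mapsto av+b\,c'(\epsilon_{q+1})v$ intertwines $\id\otimes c$ with $-c'(e_{p+1})$: on $\Kernel(\omega-1)$ one has $c'(e_{p+1})v=c'(\epsilon_{q+1})v$, whereas $v\otimes c(1,0)^{T}$ maps to $-c'(\epsilon_{q+1})v$.

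Because negation is an automorphism of the $K$-group, $\delta$ would still happen to be bijective. But your claim that both composites are isomorphisms of cocycles is false with these conventions.

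\textbf{The fix.} Either take $c'(e_{p+1}):=\id\otimes(-c)$, which is the paper's $e_{\R^2}$, or restrict to $\Kernel(\omega+1)$. Then $\rho\circ\delta$ is the identity on the nose, and your identification gives $\delta\circ\rho\cong\id$; this is the check the paper leaves to the reader.

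The ``sign bookkeeping'' you flag is therefore not about the reconstructed $c$ satisfying the Clifford relations, since plain restriction always does. It is about which eigenspace of $\omega$ you restrict to.
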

\begin{proof}
  We define $\epsilon_{\R^2}, \epsilon'_{\R^2}, e_{\R^2} \in \matrixalgebra_2(\R)$ by
  \begin{equation}
    \epsilon_{\R^2} := \begin{pmatrix}
      1 & 0 \\
      0 & 1
    \end{pmatrix}, \quad
    \epsilon'_{\R^2} := \begin{pmatrix}
      0 & 1 \\
      1 & 0
    \end{pmatrix}, \quad
    e_{\R^2} := \begin{pmatrix}
      0 & -1 \\
      1 & 0
    \end{pmatrix}.
  \end{equation}
  Then, $\epsilon_{\R^2}^2 = (\epsilon'_{\R^2})^2 = 1$ and $e_{\R^2}^2 = -1$.

  We first define a homomorphism
  \begin{equation}
    \delta \colon K_{\bddorRiesz}^{p,q}(X,A) \to K_{\bddorRiesz}^{p+1,q+1}(X,A).
  \end{equation}
  as follows.
  For $(\mathcal{H},c,h)\in \Kcocycle_{\bddorRiesz}^{p,q}(X,A)$, set $\delta([(\mathcal{H},c,h)]) =: [(\mathcal{H}',c',h')]$, where
  \begin{gather}
    \mathcal{H}' := \R^2 \otimes_{\R} \mathcal{H}, \\
    c'(\epsilon_k) := \epsilon_{\R^2} \otimes c(\epsilon_k), \quad c'(\epsilon_{q+1}) := \epsilon'_{\R^2} \otimes \id \\
    c'(e_l) := \epsilon_{\R^2} \otimes c(e_l), \quad c'(e_{p+1}) := e_{\R^2} \otimes \id, \\
    h' := \epsilon_{\R^2} \otimes h
  \end{gather}
  for $k = 0,1,\dots,q$ and $l = 1,2,\dots,p$.
  We next define a homomorphism
  \begin{equation}
    \rho \colon K_{\bddorRiesz}^{p+1,q+1}(X,A) \to K_{\bddorRiesz}^{p,q}(X,A).
  \end{equation}
  as follows.
  For $(\mathcal{H}',c',h')\in \Kcocycle_{\bddorRiesz}^{p+1,q+1}(X,A)$, set $\rho([(\mathcal{H}',c',h')]) =: [(\mathcal{H},c,h)]$, where
  \begin{gather}
    \mathcal{H} := \Kernel \big( c'(\epsilon_{q+1}) c'(e_{p+1}) - \id \otimes \id \big) \subset \mathcal{H}', \\
    c(\epsilon_k) := \left. c'(\epsilon_k) \right|_{\mathcal{H}}, \quad c(e_l) := \left. c'(e_l) \right|_{\mathcal{H}}, \\
    h := \left. h' \right|_{\mathcal{H}}
  \end{gather}
  for $k = 0,1,\dots,q$ and $l = 1,2,\dots,p$.
  Then, one can check that $\delta$ and $\rho$ are mutually inverse.
\end{proof}

This proposition gives rise to a direct system
\begin{equation}
  \dots \stackrel{\cong}{\to} K_{\bddorRiesz}^{p,q}(X,A) \stackrel{\cong}{\to} K_{\bddorRiesz}^{p+1,q+1}(X,A) \stackrel{\cong}{\to} K_{\bddorRiesz}^{p+2,q+2}(X,A) \stackrel{\cong}{\to} \dots
\end{equation}
and, using this system, we finally define the $K$-groups.

\begin{definition}
Let $X$ be a compact Hausdorff space and $A \subset X$ a closed subset.
  We define the abelian group $K_{\bddorRiesz}^n(X,A)$ as the direct limit
  \begin{equation}
    K_{\bddorRiesz}^n(X,A) := \varinjlim \big\{ \dots \to K_{\bddorRiesz}^{p,q}(X,A) \to K_{\bddorRiesz}^{p+1,q+1}(X,A) \to \cdots \big\}_{p-q = n}
  \end{equation}
for $n \in \Z$.
\end{definition}

\subsection{Riesz continuous families}

In this subsection, we present a criterion for Riesz continuity of families of unbounded selfadjoint operators, which will be used later.

We define the Riesz-continuity of a family of unbounded selfadjoint operators as follows.
See Definition~\ref{definition: continuity of unbounded (c,h)}.
Let $\mathcal{H}$ be a Hilbert bundle over a compact Hausdorff space $X$.
Let $x_0 \in X$, and let $U$ be an open neighbourhood of $x_0$ and $\phi \colon U \times \mathcal{H}_{x_0} \cong \left.\mathcal{H}\right|_U$ a local trivialisation of $\mathcal{H}$ over $U$.
For each $x \in U$, we let
\begin{equation}
  \phi^*_x \colon \unboundedselfadjointoperators(\mathcal{H}_x) \to \unboundedselfadjointoperators(\mathcal{H}_{x_0})
\end{equation}
be the bijection induced by $\phi$.
Let $h = \{h_x\}_{x \in X}$ be a family of $h_x \in \unboundedselfadjointoperators(\mathcal{H}_x)$ parametrized by $x \in X$.
We define a map $F_{\phi}^{\Riesz} \colon U \to \boundedoperators(\mathcal{H}_{x_0})$ by
\begin{equation}
  x \mapsto \Riesztransform (\phi^*_x h_x).
\end{equation}
We say that $h$ is Riesz-continuous at $x_0$ if $F_{\phi}^{\Riesz}$ is continuous at $x_0$ with respect to the norm topology on $\boundedoperators(\mathcal{H}_{x_0})$, and that $h$ is Riesz-continuous if $h$ is Riesz-continuous at each $x_0 \in X$.

The following proposition was shown implicitly by Lesch~\cite{MR2114489}*{PROPOSITION 2.2} using the theory of operator-monotonic increasing functions, the key point being the operator monotonicity of the square root $t \mapsto \sqrt{t}$.
We give an alternative argument based on the theory of \emph{operator-Lipschitz} functions.
The key observation here is that the Riesz transform is operator-Lipschitz.

For the theory of operator-Lipschitz functions, we refer the reader to the survey paper~\cite{MR3472849} and the references therein; for the reader’s convenience, we briefly collect here the results that are needed in the sequel.
A function $f \colon \R \to \R$ is called operator-Lipschitz if there exists a constant $C > 0$ such that
\begin{equation}
  \operatornorm{f(A)-f(B)} \le C \operatornorm{A-B}
\end{equation}
for any (possibly unbounded) selfadjoint operator $A$ and $B$ on a Hilbert space for which $A-B$ is bounded.
By \cite{MR3472849}*{Theorem 1.4.4}, a function $f \colon \R \to \R$ is operator-Lipschitz if $f$ belongs to the Besov space $B^1_{\infty,1}(\R)$.
The theory of Besov spaces is also neatly summarized in~\cite{MR3472849}*{Section 2.1}.
Now $\Riesztransform \in B^1_{\infty,1}(\R)$.
Thus, $\Riesztransform$ is operator-Lipschitz.

\begin{proposition}\label{proposition: Riesz continuity criterion}
  Let $X$ be a compact Hausdorff space.
  Let $\mathcal{H}$ be a Hilbert bundle over $X$.
  Let $h = \{h_x\}_{x \in X}$ be a family of unbounded selfadjoint operators $h_x \in \unboundedselfadjointoperators(\mathcal{H}_x)$ parametrized by $x \in X$.
  We assume that, for each $x_0 \in X$, there exist an open neighbourhood $U$ of $x_0$ and $\phi \colon U \times \mathcal{H}_{x_0} \to \left. \mathcal{H} \right|_U$ such that $\phi = \id$ at $x_0$, that the operator $\phi^*_x h_x - h_{x_0}$ extends to a bounded operator $m_x \in \boundedoperators(\mathcal{H}_{x_0})$ for each $x \in U$, and that the operator-valued map
  \begin{equation}
    U \ni x \mapsto m_x \in \boundedoperators(\mathcal{H}_{x_0})
  \end{equation}
  is continuous with respect to the norm topology on $\boundedoperators(\mathcal{H}_{x_0})$.
  Then, the family $\{h_x\}$ is continuous with respect to the Riesz topology.
\end{proposition}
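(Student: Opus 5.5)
The plan is to reduce Riesz continuity at each $x_0$ to the operator-Lipschitz property of $\Riesztransform$ recalled just before the statement. Fix $x_0 \in X$, and take $U$ and $\phi$ as in the hypothesis, with $\phi = \id$ at $x_0$. For $x \in U$, the operators $A := \phi^*_x h_x$ and $B := h_{x_0}$ are selfadjoint on $\mathcal{H}_{x_0}$. This holds because $\phi_x$ is a unitary (or at least a bounded invertible map with respect to which we transport the operator; if $\phi$ is only a norm-continuous bundle isomorphism, I would first replace it by its unitary part via polar decomposition, which is still norm-continuous and equals $\id$ at $x_0$). By hypothesis, $A - B$ extends to the bounded operator $m_x$.

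The key steps run as follows. First, the domains coincide: since $A = B + m_x$ with $m_x$ bounded, $\mathrm{dom}(A) = \mathrm{dom}(B)$ by the Kato--Rellich-type observation that a bounded perturbation of a selfadjoint operator is selfadjoint on the same domain. So $A - B$ is genuinely bounded and equals $m_x$ on that domain.

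Second, since $\Riesztransform \in B^1_{\infty,1}(\R)$, the operator-Lipschitz estimate gives
\begin{equation}
  \operatornorm{\Riesztransform(\phi^*_x h_x) - \Riesztransform(h_{x_0})} \le C \operatornorm{m_x}.
\end{equation}

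Third, $\phi^*_x$ commutes with functional calculus when $\phi_x$ is unitary, so $\Riesztransform(\phi^*_x h_x) = \phi^*_x(\Riesztransform(h_x))$. The left-hand side above is therefore $\operatornorm{F^{\Riesz}_\phi(x) - F^{\Riesz}_\phi(x_0)}$.

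Finally, $m_{x_0} = 0$ and $x \mapsto m_x$ is norm-continuous, so the right-hand side tends to $0$ as $x \to x_0$. This gives continuity of $F^{\Riesz}_\phi$ at $x_0$, and hence Riesz continuity at every point.

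The main obstacle is the third step, the compatibility of the trivialisation with the Riesz transform. If $\phi_x$ is not unitary, then $\phi^*_x(\Riesztransform(h_x))$ is not the Riesz transform of $\phi^*_x h_x$, and one must compare the two. I would handle this by writing $\phi_x = u_x |\phi_x|$ and using $u_x$ as a new trivialisation, noting that $\phi^*$ and $u^*$ conjugations differ by a norm-continuous family tending to $\id$. This keeps the perturbation $u_x^* h_x - h_{x_0}$ bounded only if $|\phi_x|$ preserves the domain. If that fails, I would instead assume, as is standard for Hilbert bundles with unitary structure group, that local trivialisations are unitary, and remark that this is the intended reading of the statement.
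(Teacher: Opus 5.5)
Your argument is the same as the paper's. Both use the operator-Lipschitz bound $\operatornorm{\Riesztransform(\phi^*_x h_x)-\Riesztransform(h_{x_0})}\le C\operatornorm{m_x}$ together with $m_{x_0}=0$ and norm continuity of $x\mapsto m_x$. The paper removes your third-step worry by convention: $\phi^*_x$ is the bijection on selfadjoint operators induced by a (unitary) trivialisation, and $F^{\Riesz}_\phi(x)$ is defined directly as $\Riesztransform(\phi^*_x h_x)$.

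One caution on your first step. Equality of domains does not follow from Kato--Rellich plus ``$A-B$ extends to a bounded operator'': two distinct selfadjoint extensions of one symmetric operator agree on the dense intersection of their domains, so their difference extends to $0$. Hence $\phi^*_x h_x = h_{x_0}+m_x$ on a common domain has to be read as part of the hypothesis, which is exactly what the paper implicitly does.
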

\begin{proof}
  Fix $x_0 \in X$ and an open neighbourhood $U$ of $x_0$ as in the assumption of the proposition.
  Since $\Riesztransform$ is operator-Lipschitz, there exists a constant $C > 0$ such that
  \begin{equation}\label{equation: inequality form operator-Lipschitz function theory}
    \operatornorm{\Riesztransform(\phi^*_x h_x) - \Riesztransform(h_{x_0})} \le C \operatornorm{\phi^*_x h_x - h_{x_0}} = C \operatornorm{m_x}
  \end{equation}
  for any $x \in U$.
  Since $\phi = \id$ at $x_0$, we have $m_{x_0} = 0$.
  Moreover, $x \mapsto m_x$ is norm-continuous at $x_0$ by assumption.
  Thus, by the above inequality, $x \mapsto \Riesztransform(\phi^*_x h_x)$ is norm-continuous at $x_0$.
  Consequently, the family $\{h_x\}$ is Riesz-continuous.
\end{proof}

\subsection{The inverse element}

In this subsection, we explain a method for showing that $[\alpha \oplus (-\alpha)] = 0$ in the $K$-group for a $K$-cocycle $\alpha$.

We first fix the notation.
For a (possibly unbounded) selfadjoint operator $h$ on a Hilbert space $H$, we write its spectral decomposition as
\[
h = \int_{\R} \lambda \, dE_h(\lambda),
\]
where $E_h$ denotes the spectral measure associated with $h$.
We denote by $\spectrum(h)$ the spectrum of $h$.
We also define
\begin{gather}
  H_{h < -\lambda_0} := \Image E_h \big( \{ \lambda \in \R \mid \lambda < -\lambda_0 \} \big), \quad
  H_{h >  \lambda_0} := \Image E_h \big( \{ \lambda \in \R \mid \lambda > \lambda_0 \} \big), \\
  H_{\abs{h} < \lambda_0} := \Image E_h \big( \{ \lambda \in \R \mid \abs{\lambda} < \lambda_0 \} \big), \quad
  H_{\abs{h} > \lambda_0} := \Image E_h \big(\{ \lambda \in \R \mid \abs{\lambda} > \lambda_0 \} \big)
\end{gather}
for $\lambda_0 > 0$.
We adopt the same notation for families of selfadjoint operators on Hilbert bundles.

Let $\mathcal{H}$ be a Hilbert bundle over a compact Hausdorff space $X$ and $h$ be a Riesz-continuous family of unbounded selfadjoint operators.
Fix $x_0 \in X$.
For any $\Lambda > 0$, there exists $\lambda_0 \in (0,\Lambda)$ such that $\pm \lambda_0 \notin \spectrum(h_{x_0})$.
Fix such a $\lambda_0$.
Then, there exists an open neighbourhood $U_0$ of $x_0$ such that $\pm \lambda_0 \notin \spectrum(h_x)$ for any $x \in U_0$.
Thus, we obtain an orthogonal decomposition
\begin{equation}
  \left.\mathcal{H}\right|_{U_0} = \left(\left.\mathcal{H}\right|_{U_0}\right)_{h < -\lambda_0} \oplus \left(\left.\mathcal{H}\right|_{U_0}\right)_{\abs{h} < \lambda_0} \oplus \left(\left.\mathcal{H}\right|_{U_0}\right)_{h > \lambda_0}
\end{equation}
by the Riesz continuity of $h$.

The following proposition contains the key argument of this section; its prototype appears as Proposition~\ref{proposition: prototype}.
\begin{proposition}\label{proposition: key proposition}
  Let $X$ be a compact Hausdorff space and $A \subset X$ a closed subset.
  Let $p = 0,1,2,\dots$ and $q = -1,0,1,\dots$.
  Let $\alpha = (\mathcal{H},c,h), \alpha' = (\mathcal{H}',c',h') \in \Kcocycle_{\bddorRiesz}^{p,q}(X,A)$.
  Suppose that there exist $\lambda_0 > 0$ and a family $f = \{f_x\}_{x \in X}$ of bounded operators
  \begin{equation}
    f_x \colon \mathcal{H}_x \to \mathcal{H}'_x
  \end{equation}
  parametrized by $x \in X$ that satisfy the following conditions:
  \begin{enumerate}
    \item For each $x \in X$, the spectra $(\spectrum(h_x) \cap [-\lambda_0, \lambda_0])$ and $(\spectrum(h'_x) \cap [-\lambda_0, \lambda_0])$ consist only of isolated eigenvalues with finite multiplicity.
    \item The family $x \mapsto f_x$ is continuous with respect to the norm topology.
    \item For each $x \in X$, we have $f_x \circ c_x(g) = c'_x(g) \circ f_x$ for $g \in \cliffordalgebra^{q+1,p}$.
    \item For each $x \in X$, we have $f_x \circ h_x = h'_x \circ f_x$.
    \item
      For each $x \in X$, the composition
      \begin{equation}
        \left( E_{h'_x}([-\lambda_0,\lambda_0]) \right) \circ f_x \circ \left( E_{h_x}([-\lambda_0,\lambda_0]) \right) \colon \Image E_{h_x}([-\lambda_0,\lambda_0]) \to \Image E_{h'_x}([-\lambda_0,\lambda_0])
      \end{equation}
      is a unitary isomorphism.
  \end{enumerate}
  Then, we have $[\alpha' \oplus (-\alpha)] = 0 \in K_{\bddorRiesz}^{p,q}(X,A)$.
  In particular, $[\alpha'] = [\alpha] \in K_{\bddorRiesz}^{p,q}(X,A)$.
\end{proposition}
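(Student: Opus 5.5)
The plan is to imitate the proof of Proposition~\ref{proposition: prototype}: join $\alpha' \oplus (-\alpha)$ by a homotopy to an invertible cocycle, using $f$, suitably cut off in the spectrum, as the off-diagonal coupling. Choose a continuous even function $\varphi \colon \R \to [0,1]$ with compact support in $(-\lambda_0,\lambda_0)$ and $\varphi \equiv 1$ on $[-\lambda_0/2,\lambda_0/2]$. Set
\[
g_x := \varphi(h'_x)\, f_x\, \varphi(h_x) \colon \mathcal{H}_x \to \mathcal{H}'_x .
\]
On $\pi^*(\mathcal{H}' \oplus \mathcal{H})$ over $X \times [0,1]$, define $\widetilde{c} := c' \oplus (-c)$ and
\[
\widetilde{h}_{(x,t)} := \begin{pmatrix} h'_x & t g_x \\ t g_x^* & -h_x \end{pmatrix}.
\]

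First I record the algebraic consequences of conditions (3) and (4). Taking adjoints in (4) gives $f_x^* h'_x = h_x f_x^*$. By functional calculus, $f_x$ intertwines the spectral measures: $f_x E_{h_x}(S) = E_{h'_x}(S) f_x$ for Borel sets $S$, and similarly $f_x \psi(h_x) = \psi(h'_x) f_x$ for bounded Borel $\psi$. In particular $g_x = f_x \varphi(h_x)^2$. Condition (3) together with $\{c(\cdot),h\}=0$ shows that $c$ commutes with even functions of $h$, so $c'_x g_x = g_x c_x$. Hence the off-diagonal block anticommutes with $\widetilde{c}$, and $(\widetilde{c},\widetilde{h})$ satisfies $\heartsuit$.

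Next I compute the square. Because $h'g - gh = 0$, it is block diagonal:
\[
\widetilde{h}_{(x,t)}^2 = \mathrm{diag}\bigl(h_x'^2 + t^2 g_x g_x^*,\; h_x^2 + t^2 g_x^* g_x\bigr).
\]
Let $P := E_{h_x}([-\lambda_0,\lambda_0])$ and $P'$ the corresponding projection for $h'_x$. Then $f^*f$ commutes with $h$, and condition (5) says $P' f P$ is unitary onto $\Image P'$. Together these give $P f^* f P = P$, hence $f^* f P = P$, and likewise $f f^* P' = P'$. Since $\varphi(h)^2 = \varphi(h)^2 P$, this yields $g_x^* g_x = \varphi(h_x)^4$ and $g_x g_x^* = \varphi(h'_x)^4$. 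Therefore, for $t>0$,
\[
\widetilde{h}^2_{(x,t)} \ge \min\bigl(t^2, \lambda_0^2/4\bigr),
\]
so $\widetilde{h}_{(x,t)}$ is invertible, in particular Fredholm. At $t=0$ we recover $\widetilde{h}_{(x,0)} = h'_x \oplus (-h_x)$, which is Fredholm and injective on $A$. Thus the conditions on $A \times [0,1]$ hold. Condition (1) is what guarantees that $\varphi(h_x)$ has finite rank and depends sensibly on $x$. Since $\varphi = \psi \circ \Riesztransform$ with $\psi$ continuous on $[-1,1]$, Riesz continuity of $h$ gives norm continuity of $x \mapsto \varphi(h_x)$. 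Combined with (2), $g$ is norm-continuous in $(x,t)$.

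The main obstacle is Riesz continuity of $\widetilde{h}$. It has the form $k_x + b_{(x,t)}$, where $k = h' \oplus (-h)$ is Riesz-continuous and $b$ is a norm-continuous bounded family. This is exactly the delicate point flagged before Proposition~\ref{proposition: prototype}. I would split
\[
\Riesztransform(k_x + b) - \Riesztransform(k_{x_0} + b_0)
\]
into two differences. The first, $\Riesztransform(k_x + b) - \Riesztransform(k_x + b_0)$, is bounded by $C\operatornorm{b - b_0}$ by operator-Lipschitzness, as in Proposition~\ref{proposition: Riesz continuity criterion}. For the second, $\Riesztransform(k_x + b_0) - \Riesztransform(k_{x_0} + b_0)$, I would first try to show that, for fixed bounded $b_0$, the map $k \mapsto k + b_0$ is Riesz-continuous. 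The route would be to compare resolvents, $(k+b_0 \pm i)^{-1}$ versus $(k \pm i)^{-1}$, via the second resolvent identity, and use that resolvent (gap) continuity and Riesz continuity agree on such families. Alternatively, exploiting the special structure here, since $b_0$ is a function of $k$ composed with $f$, it may be possible to express $\Riesztransform(\widetilde{h})$ as a continuous function of $(\Riesztransform(h'),\Riesztransform(h),f,t)$ as in the prototype proof.

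Once Riesz continuity is settled, $\widetilde{\alpha} \in \Kcocycle^{p,q}_{\bddorRiesz}(X\times[0,1], A\times[0,1])$. We have $i_0^*\widetilde{\alpha} = \alpha' \oplus (-\alpha)$, and $i_1^*\widetilde{\alpha}$ is invertible everywhere, so it lies in $\Kcocycle^{p,q}_{\bddorRiesz}(X,X)$. Taking $\beta_0 = 0$, $\alpha_1 = 0$ and $\beta_1 = i_1^*\widetilde{\alpha}$ gives $[\alpha' \oplus (-\alpha)] = 0$. Then Proposition~\ref{proposition: prototype} yields $[\alpha'] = [\alpha]$.
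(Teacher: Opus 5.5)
You have not proved that $\widetilde h$ is Riesz-continuous, and in the $\Riesz$ case that is the one nontrivial step. The rest is correct, and parts are cleaner than the paper's argument. From $h'_xg_x=g_xh_x$ and condition (5) you get
$\widetilde h_{(x,t)}^2=\mathrm{diag}\bigl(h_x'^2+t^2\varphi(h'_x)^4,\;h_x^2+t^2\varphi(h_x)^4\bigr)$.
This gives invertibility for every $x\in X$ and $t>0$, which is exactly what is needed for $\beta_1:=i_1^*\widetilde\alpha\in\Kcocycle^{p,q}_{\bddorRiesz}(X,X)$.

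Your first route does not close the gap. Splitting off $\Riesztransform(k_x+b_0)-\Riesztransform(k_{x_0}+b_0)$ reduces the problem to Riesz continuity of $k\mapsto k+b_0$ for a fixed bounded $b_0$. That is precisely the delicate point flagged before Proposition~\ref{proposition: prototype}. Comparing resolvents only gives gap continuity, and the gap topology is strictly weaker than the Riesz topology. So ``gap and Riesz continuity agree on such families'' is an unproved assertion. The split also discards the structure you need: $b_0=t_0B_{x_0}$, with $B_x$ the off-diagonal block, does not anticommute with $k_x$ for $x\neq x_0$.

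Your second route does work, and you already have every ingredient. Write $\widetilde h_{(x,t)}=k_x+tB_x$ with $B_x=\begin{pmatrix}0&g_x\\ g_x^*&0\end{pmatrix}$.
\begin{itemize}
\item The intertwining $h'g=gh$ is exactly the statement $k_xB_x=-B_xk_x$.
\item Your computation gives $B_x^2=\varphi(k_x)^4$, since $\varphi$ is even.
\item Hence $\widetilde h_{(x,t)}^2=k_x^2+t^2\varphi(k_x)^4$, and
\[
\Riesztransform(\widetilde h_{(x,t)})=\Phi\bigl(\Riesztransform(k_x),t\bigr)+tB_x\,\Psi\bigl(\Riesztransform(k_x),t\bigr).
\]
\item Here $\Phi(\Riesztransform(\lambda),t)=\lambda(1+\lambda^2+t^2\varphi(\lambda)^4)^{-1/2}$ and $\Psi(\Riesztransform(\lambda),t)=(1+\lambda^2+t^2\varphi(\lambda)^4)^{-1/2}$. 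Because $\varphi$ has compact support, both extend continuously to $[-1,1]\times[0,1]$, taking the values $\pm1$ and $0$ respectively at $\pm1$.
\end{itemize}
Norm continuity of $\Riesztransform(k_x)$ and of $B_x$ then gives Riesz continuity of $\widetilde h$, exactly as in the prototype proof.

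The paper handles this step differently. Near $x_0$ it splits the bundle by the spectral projections of $k_x$ at $\pm\lambda_1$. Here $\lambda_1$ is chosen just below $\lambda_0$, outside the support of the cutoff, with $\pm\lambda_1\notin\spectrum(h_{x_0})\cup\spectrum(h'_{x_0})$. The perturbation lives entirely in the finite-rank middle summand. So $\Riesztransform(\widetilde h)$ coincides with $\Riesztransform(k)$ on the outer summands and is a continuous finite-dimensional family on the middle one.
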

\begin{proof}
  Let $\alpha = (\mathcal{H},c,h), \alpha' = (\mathcal{H}',c',h') \in \Kcocycle_{\bddorRiesz}^{p,q}(X,A)$ satisfy the above assumptions.
  We will show that $\alpha' \oplus (-\alpha) \sim 0$.

  Let $\xi_{\lambda_0} \colon \R \to \R$ be an even continuous function (see Figure~\ref{fig:rho_a}) defined as
  \begin{equation}
    \xi_{\lambda_0} =
    \begin{cases}
      (1/3) \lambda_0 & \text{ if } \abs{\lambda} \le (1/3) \lambda_0 \\
      (2/3) \lambda_0 - \abs{\lambda} & \text{ if } (1/3) \lambda_0 < \abs{\lambda} < (2/3) \lambda_0 \\
      0 & \text{ if } \abs{\lambda} \ge (2/3) \lambda_0
    \end{cases}
  \end{equation}
  for $\lambda \in \R$, and set
  \begin{equation}
    (\psi_{h,h',\lambda_0})_x := \xi_{\lambda_0}(h'_x) \circ f_x \circ \xi_{\lambda_0}(h_x) \colon \mathcal{H}_x \to \mathcal{H}'_x
  \end{equation}
  for each $x \in X$.
  Then, $(\psi_{h,h',\lambda_0})_x$ is a finite-rank operator and $\operatornorm{(\psi_{h,h',\lambda_0})_x} \le 1/3$.

\begin{figure*}[tbh]
  \centering
  \includegraphics[width=10cm]{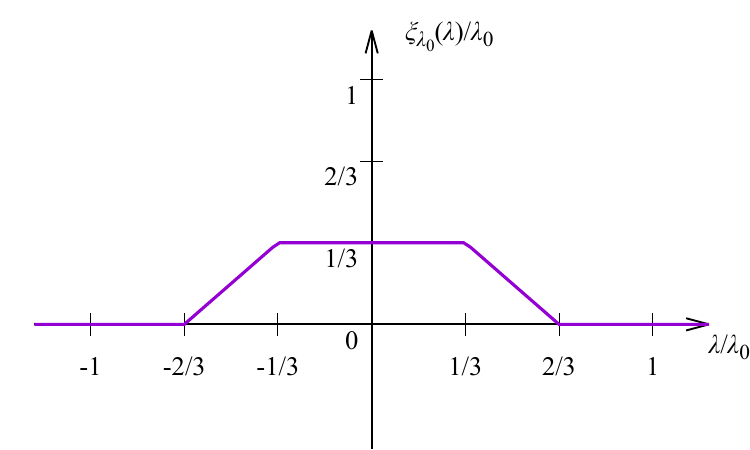}
  \caption{
 The function $\xi_{\lambda_0}(\lambda)$ normalized by $\lambda_0$.
  }
  \label{fig:rho_a}
\end{figure*}

  We define the triple $\widetilde{\alpha} = (\widetilde{\mathcal{H}},\widetilde{c},\widetilde{h})$ on $X \times [0,1]$ by setting
  \begin{equation}
    \widetilde{\mathcal{H}} := \pi^* \mathcal{H}' \oplus \pi^* \mathcal{H}, \quad
    \widetilde{c}_{(x,t)} = \begin{pmatrix}
      c'_x & 0 \\
      0 & -c_x
    \end{pmatrix}, \quad
    \widetilde{h}_{(x,t)} = \begin{pmatrix}
      h'_x & t (\psi_{h,h',\lambda_0})_x \\
      t (\psi_{h,h',\lambda_0})^*_x & -h_x
    \end{pmatrix}
  \end{equation}
  for $(x,t) \in X \times [0,1]$, where $\pi \colon X \times [0,1] \to X$ is the projection onto $X$.

  We first show that $\widetilde{\alpha} \in \Kcocycle_{\bddorRiesz}^{p,q}(X \times [0,1], A \times [0,1])$; that is, in either case $\bddorRiesz = \bounded \text{ or } \Riesz$, we verify that $\widetilde{c}$ and $\widetilde{h}$ depend continuously on $(x,t) \in X \times [0,1]$ and that $\widetilde{h}_{(x,t)}$ is Fredholm at each $(x,t) \in X \times [0,1]$.

  The continuity of $\widetilde{c}$ follows directly from that of $c$.
  We show the continuity of $\widetilde{h}$ as follows:
  \begin{description}
    \item[$\bddorRiesz = \bounded$ case]
      The continuity of $\widetilde{h}$ follows from those of $h$ and $t (\psi_{h,h',\lambda_0})$.
    \item[$\bddorRiesz = \Riesz$ case]
      Fix $(x_0,t_0) \in X \times [0,1]$.
      Let $\lambda_1$ be a constant that satisfies
      \begin{equation}
        \frac{2}{3}\lambda_0 < \lambda_1 < \lambda_0, \, \pm\lambda_1 \notin \spectrum(h_{x_0}), \text{ and } \pm\lambda_1 \notin \spectrum(h'_{x_0})
      \end{equation}
      For $t=0$, the Riesz-continuity of the family $(x,0) \mapsto \widetilde{h}_{(x,0)}$ follows directly from that of $x \mapsto h'_x$ and $x \mapsto -h_x$.
      Therefore, we have an orthogonal decomposition
      \begin{equation}
         \left.\mathcal{H}\right|_{U_0 \times [0,1]} = \left(\left.\mathcal{H}\right|_{U_0 \times [0,1]}\right)_{\widetilde{h}_{(x,0)} < -\lambda_1} \oplus \left(\left.\mathcal{H}\right|_{U_0 \times [0,1]
         }\right)_{\abs{\widetilde{h}_{(x,0)}} < \lambda_1} \oplus \left(\left.\mathcal{H}\right|_{U_0 \times [0,1]}\right)_{\widetilde{h}_{(x,0)} > \lambda_1}.
      \end{equation}
      By the choice of $\xi_{\lambda_0}$ and $\lambda_1$, for $t > 0$, the difference $\widetilde{h}_{(x,t)} - \widetilde{h}_{(x,0)}$ preserves the above decomposition.
      Hence, the Riesz transform $\Riesztransform(\widetilde{h}_{(x,t)})$ preserves the decomposition.
      Since its restriction on the second summand is a finite-rank operator, it is continuous at $(x_0,t_0)$.
      Moreover, since the difference $\widetilde{h}_{(x,t)} - \widetilde{h}_{(x,0)}$ acts trivially on the first and the third summands, the continuity of the restriction follows from  those of $\Riesztransform(h')$ and $\Riesztransform(h)$.
      Thus, $\widetilde{h}$ is Riesz-continuous at $(x_0,t_0)$.
  \end{description}

  We next show that $\widetilde{h}_{(x_0,t_0)}$ is Fredholm for each $(x_0,t_0) \in X \times [0,1]$.
  For $t_0 = 0$, the Fredholm property of $\widetilde{h}_{(x,t)}$ follows directly from that of $h'_x$ and $h_x$.
  For $t_0 \ne 0$, we consider the decomposition
  \begin{equation}
    \mathcal{H}_{(x_0,t_0)} = \left(\mathcal{H}_{(x_0,t_0)}\right)_{\widetilde{h}_{(x_0,t_0)} < -\lambda_1} \oplus \left(\mathcal{H}_{(x_0,t_0)}\right)_{\abs{\widetilde{h}_{(x_0,t_0)}} < \lambda_1} \oplus \left(\mathcal{H}_{(x_0,t_0)}\right)_{\widetilde{h}_{(x_0,t_0)} > \lambda_1}.
  \end{equation}
  It suffices to verify the Fredholm property on each summand.
  On the first and third summands, we have $\widetilde{h}_{(x_0,t_0)} = h'_{x_0} \oplus (-h_{x_0})$, which is Fredholm.
  The restriction to the second summand is a finite-rank operator, hence is Fredholm.

  We next show that $\Kernel \widetilde{h}_{(x_0,t_0)} = \{0\}$ for each $(x_0,t_0) \in A \times [0,1]$.
  For $t_0 = 0$, we have $\widetilde{h}_{x_0,0} = h'_{x_0} \oplus (-h_{x_0})$, and, hence, $\Kernel \widetilde{h}_{(x_0,0)} = \{0\}$.
  Suppose that $t_0 \ne 0$.
  Let $v' \oplus v \in \Kernel \widetilde{h}_{(x_0,t_0)}$.
  Then, we have
  \begin{gather}
    h'_{x_0} v' + t_0 (\psi_{h,h',\lambda_0})_{x_0} v = 0,\\
    t_0 (\psi_{h,h',\lambda_0})^*_{x_0} v' - h_{x_0} v = 0.
  \end{gather}
  Hence, we have
  \begin{gather}
     (h'_{x_0})^2 v' + t_0 h'_{x_0} (\psi_{h,h',\lambda_0})_{x_0} v = 0, \\
    t_0  (\psi_{h,h',\lambda_0})_{x_0} (\psi_{h,h',\lambda_0})^*_{x_0} v' -  (\psi_{h,h',\lambda_0})_{x_0} h_{x_0} v = 0.
  \end{gather}
  Since $(\psi_{h,h',\lambda_0})_{x_0} \circ h_{x_0} = h'_{x_0} \circ (\psi_{h,h',\lambda_0})_{x_0}$, the second equation implies
  \begin{equation}
    t_0 h'_{x_0} (\psi_{h,h',\lambda_0})_{x_0} v = t_0 (\psi_{h,h',\lambda_0})_{x_0} h_{x_0} v = t_0^2  (\psi_{h,h',\lambda_0})_{x_0} (\psi_{h,h',\lambda_0})^*_{x_0} v'.
  \end{equation}
  Hence, we have
  \begin{equation}
    (h'_{x_0})^2 v' + t_0^2  (\psi_{h,h',\lambda_0})_{x_0} (\psi_{h,h',\lambda_0})^*_{x_0} v' = 0.
  \end{equation}
  Thus, we have $h'_{x_0} v' = 0$ and $(\psi_{h,h',\lambda_0})^*_{x_0} v' = 0$.
  Then, $h'_{x_0} v' = 0$ implies
  \begin{equation}
     v' \in E_{h'_{x_0}}(\{0\}).
  \end{equation}
  On the other hand, since $(\psi_{h,h',\lambda_0})^*_{x_0} = \xi_{\lambda_0}(h) \circ f^*_{x_0} \circ \xi_{\lambda_0}(h')$ and $f^*$ is an isomorphism, $(\psi_{h,h',\lambda_0})^*_{x_0} v' = 0$ implies
  \begin{equation}
    v' \in E_{h'_{x_0}} \big( \{\lambda \in \R \mid \xi_{\lambda_0} = 0 \} \big).
  \end{equation}
  By the definition of $\xi_{\lambda_0}$, we have $\{0\} \cap \{\lambda \in \R \mid \xi_{\lambda_0} = 0 \} = \emptyset$.
  Thus, $v' \in E_{h'_{x_0}}(\emptyset) = \{0\}$.
  In the same way, we have $v = 0$.
  Consequently, we have proved that $\Kernel \widetilde{h}_{(x_0,t_0)} = \{0\}$.

  Now we define
  \begin{gather}
    \alpha_0 := \alpha' \oplus (-\alpha) \in \Kcocycle_{\bddorRiesz}^{p,q}(X,A), \quad \alpha_1 := 0 \in \Kcocycle_{\bddorRiesz}^{p,q}(X,A) \\
    \beta_0 := 0 \in \Kcocycle_{\bddorRiesz}^{p,q}(X,X), \quad \beta_1 := (\mathcal{H}' \oplus \mathcal{H}, c_1, h_1) \in \Kcocycle_{\bddorRiesz}^{p,q}(X,X),
  \end{gather}
  where
  \begin{equation}
    (c_1)_x := \begin{pmatrix}
      c'_x & 0 \\
      0 & -c_x
    \end{pmatrix}, \quad
    (h_1)_x := \begin{pmatrix}
      h'_x & 1 \\
      1 & -h_x
    \end{pmatrix}
  \end{equation}
  for each $x \in X$.
  Then, we have $\alpha_0 \oplus \beta_0 \cong i_0^* \widetilde{\alpha}$ and $\alpha_1 \oplus \beta_1 \cong i_1^* \widetilde{\alpha}$.
  Consequently, we have $[\alpha' \oplus (-\alpha)] = [0]$.
  Thus, we have proved that $[\alpha] + [-\alpha] = 0$.
\end{proof}

By combining Propositions~\ref{proposition: Riesz continuity criterion} and Proposition~\ref{proposition: key proposition}, we obtain a criterion for two $K$-cocycles defined by unbounded selfadjoint operators to represent the same element in the $K$-group.
This criterion will be used in the proof of the main theorem (Theorem~\ref{goal}).

\begin{theorem}\label{theorem: minus-variant2}
  Let $X$ be a compact Hausdorff space and $A \subset X$ a closed subset.
  Let $p=0,1,2,\dots$ and $q=-1,0,1,\dots$.
  Let $\alpha = (\mathcal{H},c,h), \alpha' = (\mathcal{H}',c',h') \in \Kcocycle_{\Riesz}^{p,q}(X,A)$.
  Let $f = \{f_x\}_{x \in X}$ be a family of bounded operators $f_x \colon \mathcal{H}_x \to \mathcal{H}'_x$ parametrized by $x \in X$.
  We assume the following conditions:
  \begin{enumerate}
    \item
      For each $x_0 \in X$, there exist an open neighbourhood $U$ of $x_0$ and $\phi \colon U \times \mathcal{H}_{x_0} \to \left. \mathcal{H} \right|_U$ such that $\phi = \id$ at $x_0$, that the operator $\phi^*_x h_x - h_{x_0}$ extends to a bounded operator $m_x \in \boundedoperators(\mathcal{H}_{x_0})$ for each $x \in U$.
      Moreover, the operator-valued map $U \ni x \mapsto m_x \in \boundedoperators(\mathcal{H}_{x_0})$ is continuous with respect to the norm topology on $\boundedoperators(\mathcal{H}_{x_0})$.
    \item We impose the same assumption on $\alpha'$.
    \item For each $x \in X$, we have $f_x \circ c_x(g) = c'_x(g) \circ f_x$ for $g \in \cliffordalgebra^{q+1,p}$.
    \item
      We have
      \begin{equation}
        \Kernel \begin{pmatrix}
          h'_x & f_x \\
          f^*_x & -h_x
        \end{pmatrix}
        = 0
      \end{equation}
      for each $x \in X$.
  \end{enumerate}
  Then, we have $[\alpha' \oplus (-\alpha)] = 0 \in K_{\bddorRiesz}^{p,q}(X,A)$.
  In particular, $[\alpha'] = [\alpha] \in K_{\bddorRiesz}^{p,q}(X,A)$.
\end{theorem}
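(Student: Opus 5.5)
We will produce an explicit homotopy, in the spirit of Proposition~\ref{proposition: prototype}, connecting the cocycle $\alpha' \oplus (-\alpha)$ to the invertible family $\begin{pmatrix} h'_x & f_x \\ f^*_x & -h_x \end{pmatrix}$. The latter represents zero because it is acyclic everywhere. Concretely, on $X \times [0,1]$ we take $\widetilde{\mathcal{H}} := \pi^*\mathcal{H}' \oplus \pi^*\mathcal{H}$ and $\widetilde{c} := c' \oplus (-c)$, and we set
\begin{equation}
  \widetilde{h}_{(x,t)} := \begin{pmatrix} h'_x & t f_x \\ t f^*_x & -h_x \end{pmatrix}.
\end{equation}

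First, $\widetilde{c}$ anticommutes with $\widetilde{h}$. On the diagonal this holds because $c'$ anticommutes with $h'$ and $-c$ with $-h$. On the off-diagonal blocks it reduces to $c'(g) f = f c(g)$ for the generators $g$, which is assumption~(3); for $f^*$ we use that the $c(g)$ are unitary up to sign, via the $*$-structure.

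Second, $\widetilde{h}$ is Riesz-continuous. Near $(x_0,t_0)$ we take the trivialisation $\phi' \oplus \phi$ provided by assumptions~(1) and~(2). Then $\widetilde{h}_{(x,t)} - \widetilde{h}_{(x_0,t_0)}$ equals the block matrix built from $m'_x$, $-m_x$ and $t\phi^*f_x - t_0 f_{x_0}$, which is bounded. It is norm-continuous provided $x \mapsto f_x$ is norm-continuous in the trivialisations. That is implicit here, and I would add it as an assumption or read it into the hypotheses. Proposition~\ref{proposition: Riesz continuity criterion} then gives Riesz continuity.

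Third, Fredholmness. $\widetilde{h}_{(x,t)}$ is a bounded perturbation of the Fredholm operator $h'_x \oplus (-h_x)$. Since $h'_x, h_x$ are Fredholm selfadjoint, the sum is Fredholm only if the perturbation is relatively compact, or more simply if the perturbed operator stays in the Fredholm set. This is the main obstacle, because a bounded perturbation need not preserve Fredholmness. I would try to avoid it by first reducing to the case where only a finite-rank part matters, as in Proposition~\ref{proposition: key proposition}. Alternatively I would note that in the intended application the operators have compact resolvent. In that case a bounded perturbation preserves compact resolvent, hence Fredholmness, and the perturbed operator is still selfadjoint by Kato--Rellich.

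Fourth, kernels. At $t=0$ over $A$ the kernel is $\Kernel h' \oplus \Kernel h = 0$. At $t=1$ the kernel vanishes everywhere by assumption~(4), which also makes $i_1^*\widetilde{\alpha}$ a cocycle in $\Kcocycle^{p,q}_{\Riesz}(X,X)$. For $t \in (0,1)$ over $A$ we must check $\Kernel\widetilde{h}_{(x,t)} = 0$. I would conjugate by the unitary $\id \oplus s\,\id$ and rescale, or argue directly: if $h'v' + tfv = 0$ and $tf^*v' - hv = 0$, pairing the equations with $v'$ and $v$ gives
\begin{equation}
  \langle h'v',v'\rangle + t\langle fv,v'\rangle = 0, \qquad t\langle v, f^*v'\rangle - \langle hv,v\rangle = 0.
\end{equation}
This does not obviously force vanishing, so the interior of $A \times [0,1]$ also needs care.

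The cleaner route is to compare the endpoints through $[\,\cdot\,]$ instead. The homotopy $\widetilde{\alpha}$ is only required to be acyclic over $A \times [0,1]$. On $A$ both $h$ and $h'$ are invertible, so over $A$ we can use the first half of the parameter to turn off $f$ only outside a spectral window. If that is hard, we can instead use the straight-line homotopy $s \mapsto (1-s)(h'\oplus -h) + s\,\widetilde{h}_1$ on $A$, checking invertibility by a norm estimate on a compact set.

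Given $\widetilde{\alpha}$, we take $\beta_0 = 0$, $\beta_1 = i_1^*\widetilde{\alpha}$ and $\alpha_1 = 0$. Exactly as in Proposition~\ref{proposition: prototype}, this yields $[\alpha' \oplus (-\alpha)] = 0$, and Proposition~\ref{proposition: prototype} then gives $[\alpha'] = [\alpha]$.
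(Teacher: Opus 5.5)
\textbf{Verdict.} Your homotopy $\widetilde h_{(x,t)}$ is the same one the paper uses, and your Clifford and continuity checks are fine (with $\widetilde c=c'\oplus(-c)$). The genuine gap is the one you flag and then try to patch: injectivity of $\widetilde h_{(x,t)}$ for $x\in A$ and $0<t<1$.

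\textbf{Why the patches fail.} Your ``straight-line homotopy'' $(1-s)(h'\oplus -h)+s\,\widetilde h_{(x,1)}$ is literally $\widetilde h_{(x,s)}$, so it is the same homotopy, and no norm estimate makes it invertible. In fact no argument can work, because the statement is false under hypotheses (1)--(4) alone. Take
\begin{itemize}
\item $p=0$, $q=-1$ (no Clifford generators, so the group is $K^1(I,\partial I)\cong\Z$ via spectral flow);
\item $X=I=[-1,1]$ and $A=\partial I$;
\item $\mathcal H=\mathcal H'=I\times\C$, with $h_x=-1$, $h'_x=x$ and $f_x=\sqrt2$.
\end{itemize}
Hypotheses (1)--(3) are trivial, and
\[
\det\begin{pmatrix} x & \sqrt2\\ \sqrt2 & 1\end{pmatrix}=x-2\le -1,
\]
so (4) holds. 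Yet $\mathrm{sf}[h']=1\neq 0=\mathrm{sf}[h]$. Along your homotopy at $x=1\in A$ one has $\det\widetilde h_{(1,t)}=1-2t^2$, which vanishes at $t=1/\sqrt2$.

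The paper's proof has the same hole. It defers to the proof of Proposition~\ref{proposition: key proposition}, but the kernel computation there uses the intertwining $f h=h'f$ and the unitarity of $f$ on the spectral window. Its Fredholm argument uses that $\psi$ has finite rank. None of these is assumed here.

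\textbf{What is missing.} The fix is an extra hypothesis:
\[
\Kernel\begin{pmatrix} h'_x & s f_x\\ s f_x^* & -h_x\end{pmatrix}=0 \quad\text{for all } x\in A,\ s\in[0,1].
\]
This is exactly what Theorem~\ref{goal} verifies on the two legs $t=\pm1$ of the staple $P$, so the application is unaffected.

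Your other two concerns are also genuine and likewise need hypotheses.
\begin{itemize}
\item Norm continuity of $x\mapsto f_x$ must be assumed.
\item Fredholmness of $\widetilde h_{(x,t)}$ can fail for merely bounded $f$. Take $h'=\id$, $h=-\id$ on an infinite-dimensional space and $f=2\,\id$. Then (4) holds, but $\widetilde h_{(x,1/2)}$ has infinite-dimensional kernel.
\end{itemize}
Requiring $f_x$ compact suffices. In the application, $s\,\iota_a$ has finite rank because its domain $\Gamma(\lE_a)$ is finite-dimensional; your compact-resolvent argument also works there.

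With these hypotheses added, $\widetilde\alpha$ lies in $\Kcocycle^{p,q}_{\Riesz}(X\times[0,1],A\times[0,1])$ and $i_1^*\widetilde\alpha$ lies in $\Kcocycle^{p,q}_{\Riesz}(X,X)$. Your final step then goes through verbatim.
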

\begin{proof}
  Let $\widetilde{\mathcal{H}} := (\mathcal{H} \oplus \mathcal{H}') \times [0,1]$.
  For each $(x,t) \in X \times [0,1]$, we set $\widetilde{c}_{(x,t)} := c'_x \oplus c_x$ and
  \begin{equation}
    \widetilde{h}_{(x,t)} := \begin{pmatrix}
      h'_x & tf_x \\
      tf^*_x & -h_x
    \end{pmatrix},
  \end{equation}
  and let $\widetilde{c} = \{\widetilde{c}_{(x,t)}\}_{(x,t) \in X \times [0,1]}$ and $\widetilde{h} = \{\widetilde{h}_{(x,t)}\}_{(x,t) \in X \times [0,1]}$.
  By Proposition~\ref{proposition: Riesz continuity criterion}, the family $h$ is Riesz-continuous.
  Hence, $\widetilde{\alpha} := (\widetilde{\mathcal{H}},\widetilde{c},\widetilde{h}) \in \Kcocycle_{\Riesz}^{p,q}(X \times [0,1], A \times [0,1])$.
  Then, using $\widetilde{\alpha}$, we can argue exactly as in the proof of
  Proposition~\ref{proposition: key proposition} to conclude that $[\alpha' \oplus (-\alpha)] = 0 \in K_{\bddorRiesz}^{p,q}(X,A)$.
\end{proof}

\subsection{Isomorphism \texorpdfstring{$K_{\bounded}^{\ast} \cong K_{\Riesz}^{\ast}$}{between {Kpq}-groups}}\label{subsection: isomorphism}

Let $X$ be a compact Hausdorff space and $A \subset X$ a closed subset.
Let $p=0,1,2,\dots$ and $q=-1,0,1,\dots$.
In this subsection, we finally establish the isomorphism $K_{\bounded}^{p,q}(X,A) \cong K_{\Riesz}^{p,q}(X,A)$.

We note that, if $f \colon \R \to \R$ is a bounded continuous function and $h = \{h_x\}_{x \in X}$ is a norm-continuous family of bounded selfadjoint operators $h_x \in \boundedselfadjointoperators(\mathcal{H}_x)$ on a Hilbert bundle $\mathcal{H}$ over $X$, then the family $f(h) := \{f(h_x)\}$ is also norm-continuous.

We define a homomorphism
\begin{equation}
  \bddtoRiesz \colon K_{\bounded}^{p,q}(X,A) \to K_{\Riesz}^{p,q}(X,A)
\end{equation}
by simply sending $[\alpha] \mapsto [\alpha]$ for $\alpha \in \Kcocycle_{\bounded}^{p,q}(X,A)$, which is clearly well defined.

To define the inverse, we introduce an auxiliary function $\TRtohalf \colon \R \to \R$ by setting
\begin{equation}
  \TRtohalf(\lambda) := \begin{cases}
    -\frac{1}{2} & \text{ if } \lambda \le -\frac{1}{2} \\
    \lambda & \text{ if } \abs{\lambda} < \frac{1}{2} \\
    \frac{1}{2} & \text{ if } \lambda \ge \frac{1}{2}
  \end{cases}
\end{equation}
for $\lambda \in \R$.
This function is bounded and continuous.
We then define a homomorphism
\begin{equation}
  \Riesztobdd \colon K_{\Riesz}^{p,q}(X,A) \to K_{\bounded}^{p,q}(X,A)
\end{equation}
by setting
\begin{equation}
  [(\mathcal{H}',c',h')] \mapsto [(\mathcal{H}',c',\TRtohalf(h'))]
\end{equation}
for $(\mathcal{H}',c',h') \in \Kcocycle_{\Riesz}^{p,q}(X,A)$.
The well-definedness of this homomorphism follows from the existence of a continuous function $\Tunittohalf \colon [-1,1] \to [-1/2,1/2]$ such that $\TRtohalf = \Tunittohalf \circ \Riesztransform$.

\begin{figure*}[tbh]
  \centering
  \includegraphics[width=10cm]{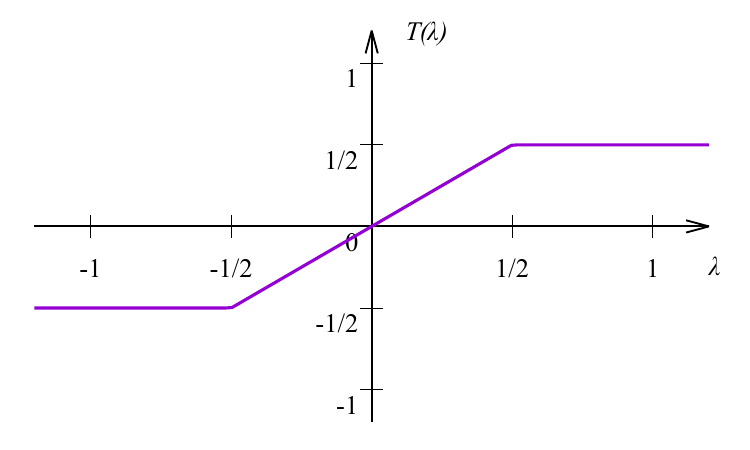}
  \caption{
 The function $T_{\R \to [-1/2,1/2]}(\lambda)$.
  }
  \label{fig:T}
\end{figure*}

The homomorphisms $\bddtoRiesz$ and $\Riesztobdd$ are compatible with the direct systems $\{K_{\bddorRiesz}^{p,q}(X,A) \to K_{\bddorRiesz}^{p+1,q+1}(X,A)\}_{p-q=n}$.
Thus, for each $n \in \Z$, they induce homomorphisms
\begin{gather}
  \bddtoRiesz \colon K_{\bounded}^n(X,A) \to K_{\Riesz}^n(X,A) \\
  \Riesztobdd \colon K_{\Riesz}^n(X,A) \to K_{\bounded}^n(X,A),
\end{gather}
which we denote by the same notation.

\begin{theorem}\label{theorem: isomorphism between bounded and Riesz}
  Let $X$ be a compact Hausdorff space and $A \subset X$ a closed subset.
  For each $n \in \Z$, the homomorphisms $\bddtoRiesz$ and $\Riesztobdd$ between $K_{\Riesz}^n(X,A)$ and $K_{\bounded}^n(X,A)$ are mutually inverse.
  In particular, we have a natural isomorphism
  \begin{equation}
    K_{\Riesz}^n(X,A) \cong K_{\bounded}^n(X,A)
  \end{equation}
  for all $n \in \Z$.
\end{theorem}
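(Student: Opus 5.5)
We check the two compositions separately at the level of $K^{p,q}$, since both maps are compatible with the direct systems. Compatibility can be checked directly: $\delta$ replaces $h$ by $\epsilon_{\R^2}\otimes h$, and functional calculus commutes with this tensoring, so it suffices to work at fixed $(p,q)$.

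\emph{Step 1: $\Riesztobdd\circ\bddtoRiesz=\id$ on $K_{\bounded}^{p,q}(X,A)$.} Take $\alpha=(\mathcal{H},c,h)$ with $h$ a norm-continuous bounded family. We must show $[(\mathcal{H},c,h)]=[(\mathcal{H},c,\TRtohalf(h))]$ in $K_{\bounded}^{p,q}$. We use the straight-line homotopy of functions $f_s(\lambda)=(1-s)\lambda+s\TRtohalf(\lambda)$, $s\in[0,1]$, and set $\widetilde{h}_{(x,s)}=f_s(h_x)$. Each $f_s$ is continuous and odd, so $f_s(h_x)$ still anticommutes with $c(\epsilon_k)$ and $c(e_l)$. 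Moreover $f_s$ is bounded on the (compact, uniformly bounded) spectra, and by the remark preceding the theorem $(x,s)\mapsto f_s(h_x)$ is norm-continuous; joint continuity in $s$ follows from the uniform convergence of $f_s$ on bounded intervals. Since $f_s(\lambda)=0$ iff $\lambda=0$ and $\abs{f_s(\lambda)}\ge\min(\abs{\lambda},1/2)$ near $0$, functional calculus preserves the Fredholm property (spectrum near $0$ consists of isolated finite-multiplicity eigenvalues) and invertibility on $A$. This gives the required homotopy with trivial stabilizations $\beta_0=\beta_1=0$.

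\emph{Step 2: $\bddtoRiesz\circ\Riesztobdd=\id$ on $K_{\Riesz}^{p,q}(X,A)$.} For $\alpha'=(\mathcal{H}',c',h')$ Riesz-continuous, we need a Riesz-continuous homotopy from $h'$ to $\TRtohalf(h')$. Again we use $g_s(\lambda)=(1-s)\lambda+s\TRtohalf(\lambda)$, now for unbounded $h'$. The key point is that Riesz continuity of $(x,s)\mapsto g_s(h'_x)$ amounts to the norm continuity of $\Riesztransform(g_s(h'_x))=\Phi_s(\Riesztransform(h'_x))$. Here $\Phi_s:=\Riesztransform\circ g_s\circ\Riesztransform^{-1}$ on $(-1,1)$, extended to $\pm1$; we need the map $(\mu,s)\mapsto\Phi_s(\mu)$ to be jointly continuous on $[-1,1]\times[0,1]$. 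This is the only subtle step, and it is the main obstacle because of the endpoint $s=1$. For $s<1$ we have $g_s(\lambda)\to\pm\infty$, so $\Phi_s(\pm1)=\pm1$. At $s=1$, $g_1$ is bounded, so $\Phi_1(\pm1)=\pm1/\sqrt5$. Thus $\Phi$ is discontinuous at $(\pm1,1)$, and the naive homotopy fails on the unbounded part of the spectrum.

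To repair this, we use the freedom of homotopy in the bounded model. Choose instead $g_s(\lambda)=\lambda/(1+s\abs{\lambda})$ for $s\in[0,2]$, say, and then continue with a compactly supported deformation. For $s>0$ this function is bounded with limits $\pm1/s$, and $\Phi_s(\pm1)=\Riesztransform(\pm1/s)\to\pm1$ as $s\to0$, so the composite is jointly continuous on $[-1,1]\times[0,2]$. At $s=2$ we then pass, by a straight-line homotopy of bounded odd functions that are nonvanishing off $0$ (exactly as in Step 1, now through Riesz-continuous families obtained as continuous functions of $\Riesztransform(h')$), to $\TRtohalf$. Fredholmness and kernel conditions are checked as in Step 1, since all functions used vanish only at $0$ and have slope bounded below near $0$.

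We then apply the joint continuity and the uniform continuity of $\Phi$ on the compact square, together with $\operatornorm{\Phi_s(T)-\Phi_{s'}(T')}\to0$ for norm-convergent selfadjoint contractions. This gives Riesz continuity of the whole homotopy, so it defines a cocycle over $X\times[0,1]$ relative to $A\times[0,1]$. Hence $[\alpha']=[(\mathcal{H}',c',\TRtohalf(h'))]$ in $K_{\Riesz}^{p,q}$, completing the proof after passing to direct limits.
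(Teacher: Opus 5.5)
Your proof is correct, but it takes a different route from the paper.

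\textbf{The paper's route.} The paper builds no functional-calculus homotopy. For both compositions it applies Proposition~\ref{proposition: key proposition} with $f=\id$ and some $\lambda_0\in(0,1/2)$ below which the spectrum is discrete. It uses only the fact that $h$ and $\TRtohalf(h)$ agree on the spectral subspace $\abs{\lambda}<1/2$. That proposition joins $\alpha'\oplus(-\alpha)$ to an everywhere-invertible cocycle by switching on the finite-rank coupling $t\,\xi_{\lambda_0}(h')f\,\xi_{\lambda_0}(h)$. This coupling moves only the finite-dimensional low-lying spectrum, so Riesz continuity is nearly automatic and the behaviour at infinity is never examined. The equality $[\alpha']=[\alpha]$ then follows from the group structure of Proposition~\ref{proposition: prototype}.

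\textbf{Your route.} You deform $h$ itself through odd functions $g_s(h)$, which forces you to handle exactly that behaviour at infinity. Your observation that the straight line is not Riesz-continuous at $s=1$ for unbounded $h'$ is correct: the conjugate $\Riesztransform\circ g_s\circ\Riesztransform^{-1}$ jumps from $\pm1$ to $\pm1/\sqrt5$ at $\mu=\pm1$. The detour through $\lambda/(1+s\abs{\lambda})$, followed by a straight line between bounded functions with common limits $\pm1/2$, repairs this.

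Two small points deserve an explicit line:
\begin{enumerate}
\item Joint continuity at $(\pm1,0)$ does not follow from convergence of the endpoint values alone. It does follow from $g_s(\lambda)=1/(\lambda^{-1}+s)\to\infty$ as $(\lambda^{-1},s)\to(0,0)$ for $\lambda>0$.
\item The Fredholm check really uses a bound $\abs{g(\lambda)}\ge c\min(\abs{\lambda},1)$, not just ``vanishing only at $0$ with slope bounded below near $0$''. This bound holds because all your functions have nonzero limits at $\pm\infty$.
\end{enumerate}

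\textbf{What each approach buys.} Your argument is elementary and self-contained, and it gives an honest homotopy with no stabilization and no use of inverse elements. It also sidesteps a small slip in the paper: with $f=\id$, hypothesis (4) of the key proposition would literally force $h=h'$. The proof of that proposition only needs $\psi h=h'\psi$ for $\psi=\xi_{\lambda_0}(h')f\,\xi_{\lambda_0}(h)$, which does hold here. The paper's argument, in exchange, works verbatim in both the bounded and Riesz settings. It is also the same tool the paper reuses for Theorem~\ref{theorem: minus-variant2} and the main theorem.
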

The proof relies on Proposition~\ref{proposition: key proposition}, which provides the key argument in this paper.
\begin{proof}
  Let $n \in \Z$, and fix $p=0,1,2,\dots$ and $q=-1,0,1,\dots$ such that $p-q = n$.
  We first prove that $\Riesztobdd \circ \bddtoRiesz = \id$.
  Let $\alpha = (\mathcal{H},c,h) \in \Kcocycle_{\bounded}^{p,q}(X,A)$, and set $\alpha' := (\mathcal{H},c,\TRtohalf(h))$.
  Since $X$ is compact and $h$ is a continuous family, there exists $\lambda_0 \in (0,1/2)$ such that $(\spectrum(h_x) \cap [-\lambda_0,\lambda_0])$ consists only of isolated eigenvalues with finite multiplicity for each $x \in X$.
  Fix such a $\lambda_0$ and let $f := \id$.
  Then, by Proposition~\ref{proposition: key proposition}, we obtain $[\alpha] = [\alpha']$.

  We next prove that $\bddtoRiesz \circ \Riesztobdd = \id$.
  Let $\beta := (\mathcal{H}',c',h') \in \Kcocycle_{\Riesz}^{p,q}(X,A)$, and set $\beta' := (\mathcal{H}',c',\TRtohalf(h'))$.
  Fix again $\lambda_0 \in (0,1/2)$ such that $(\spectrum(h'_x) \cap [-\lambda_0,\lambda_0])$ consists only of isolated eigenvalues with finite multiplicity for each $x \in X$, and let $f := \id$.
  Then, by Proposition~\ref{proposition: key proposition}, we obtain $[\beta] = [\beta']$.
\end{proof}

This completes our construction of $K$-groups.

\subsection{Gap topology}

In this final subsection, although somewhat tangential to the main line of argument, we state a proposition that plays an essential role in defining $K$-groups using the gap topology.
The use of the gap topology will be necessary when we generalize our main theorem (Theorem~\ref{goal}) to families.
In Proposition~\ref{proposition: key proposition}, the assumption that $\phi = \id$ at $x_0$ was imposed in order to ensure that $m_{x_0} = 0$.
However, when extending the argument to families, this assumption must be removed.
When working with the Riesz topology, this seems difficult because the Riesz transform is highly nonlinear.

We begin by recalling the definition of the gap topology.
Let $H$ be a separable Hilbert space over $\C$.
Recall that $\unboundedselfadjointoperators(H)$ and $\unitaryoperators(H),\boundedselfadjointoperators(H) \subset \boundedoperators(H)$ denote the spaces of unbounded selfadjoint operators, unitary operators, bounded selfadjoint operators, and bounded operators on $H$ respectively.
We define the Cayley transform $\Cayleytransform \colon \unboundedselfadjointoperators(H) \to \unitaryoperators(H)$ via functional calculus associated with an injective continuous map
\begin{equation}
  \Cayleytransform \colon \R \to U(1) \setminus \{1\}, \lambda \to \frac{\lambda-i}{\lambda+i}.
\end{equation}
The gap topology on $\unboundedselfadjointoperators(H)$ is defined as the pullback of the norm topology on $\unitaryoperators(H) \subset \boundedoperators(H)$ via $\Cayleytransform$.
We note that $\Image \Cayleytransform = \{U \in \unitaryoperators(H) \mid \Kernel (U - \id) = \{0\} \}$.
Moreover, we observe that
\begin{equation}
  \begin{aligned}
  \Cayleytransform(A) - \Cayleytransform(B) = {}& \frac{A-i}{A+i} - \frac{B-i}{B+i} \\
  = {}& \left( 1 - \frac{2i}{A+i} \right) - \left( 1 - \frac{2i}{B+i} \right) = -2i \left( \frac{1}{A+i} - \frac{1}{B+i} \right)
  \end{aligned}
\end{equation}
for $A,B \in \unboundedselfadjointoperators(H)$.
Hence, the gap topology is the weakest topology such that the maps $A \mapsto (A \pm i)^{-1}$ are continuous.
We also remark that the gap topology is strictly weaker than the Riesz topology.

The following proposition should be regarded as the counterpart, for the gap topology, of inequality~\eqref{equation: inequality form operator-Lipschitz function theory}, which has played a key role in the proof of Riesz continuity and follows from the theory of operator-Lipschitz functions.
\begin{proposition}
  We have
  \begin{equation}
    \operatornorm{\Cayleytransform(A+K) - \Cayleytransform(B+L)} \le 2\operatornorm{K-L} + (1+\operatornorm{L})^2 \operatornorm{\Cayleytransform(A)-\Cayleytransform(B)}
  \end{equation}
  for $A,B \in \unboundedselfadjointoperators(H)$ and $K,L \in \boundedselfadjointoperators(H)$.
\end{proposition}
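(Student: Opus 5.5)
The plan is to reduce everything to resolvents via the identity $\Cayleytransform(X) = 1 - 2i(X+i)^{-1}$ noted just above the statement. For $X \in \unboundedselfadjointoperators(H)$ write $R_X := (X+i)^{-1}$. Since $X$ is selfadjoint its spectrum is real, so $\operatornorm{R_X} \le 1$. Put $S_A := (A+L+i)^{-1}$ and $S_B := (B+L+i)^{-1}$. These make sense because $A+L$ and $B+L$ are again selfadjoint, on the domains of $A$ and $B$ respectively, for bounded selfadjoint $L$. Then
\[
\Cayleytransform(A+K)-\Cayleytransform(B+L) = -2i\big( (A+K+i)^{-1} - S_A \big) - 2i\big( S_A - S_B \big),
\]
and I will bound the two brackets separately.

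For the first bracket I use the second resolvent identity
\[
(A+K+i)^{-1} - (A+L+i)^{-1} = (A+K+i)^{-1}(L-K)(A+L+i)^{-1}.
\]
It is legitimate because both resolvents map $H$ into $\mathrm{Dom}(A)$, and $(A+K+i)-(A+L+i)=K-L$ holds there. Since both resolvents have norm at most $1$, this term contributes at most $2\operatornorm{K-L}$.

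For the second bracket, the key step is an exact factorization that keeps $A$ and $B$ apart. First I verify the two perturbation identities
\[
S_A = R_A - S_A L R_A = (1 - S_A L) R_A, \qquad S_B = R_B - R_B L S_B = R_B (1 - L S_B).
\]
Each is checked by applying $A+L+i$ (respectively $B+L+i$) on the correct side, using that $R_A$ maps into $\mathrm{Dom}(A)$. From these I claim
\[
S_A - S_B = (1 - S_A L)(R_A - R_B)(1 - L S_B).
\]
To check it, expand the right-hand side. The term $(1-S_AL)R_A(1-LS_B)$ equals $S_A - S_A L S_B$. The term $(1-S_AL)R_B(1-LS_B)$ equals $S_B - S_A L S_B$. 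Subtracting leaves $S_A - S_B$. Since $\operatornorm{1-S_AL}$ and $\operatornorm{1-LS_B}$ are both at most $1+\operatornorm{L}$, we get
\[
2\operatornorm{S_A-S_B} \le (1+\operatornorm{L})^2 \cdot 2\operatornorm{R_A-R_B} = (1+\operatornorm{L})^2 \operatornorm{\Cayleytransform(A)-\Cayleytransform(B)}.
\]
Adding the two bounds gives the stated inequality.

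The main obstacle is finding the factorization of $S_A - S_B$. A naive resolvent identity $S_A - S_B = S_A(B-A)S_B$ involves the unbounded and possibly ill-defined difference $B-A$, so it cannot be used. The alternative route $S_A = R_A(1+LR_A)^{-1}$ needs a bound on $(1+LR_A)^{-1}$ that is not uniform. The two-sided perturbation identities avoid both problems, leaving only routine domain bookkeeping.
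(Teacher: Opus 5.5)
Your proposal is correct and follows essentially the same route as the paper. It uses the same splitting through $(A+L+i)^{-1}$ and the second resolvent identity for the first piece. For the second piece, your factors $1-S_AL$ and $1-LS_B$ are exactly the bounded extensions of the paper's $(A+L+i)^{-1}(A+i)$ and $(B+i)(B+L+i)^{-1}$. Your form makes the bound $1+\|L\|$ and the domain bookkeeping explicit, which the paper only asserts.
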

\begin{proof}
  We have
  \begin{equation}
    \begin{aligned}
    & \Cayleytransform(A+K) - \Cayleytransform(B+L) \\
    ={} & 2i \left( (A+K+i)^{-1} - (B+L+i)^{-1} \right) \\
    ={} & 2i \left( (A+K+i)^{-1} - (A+L+i)^{-1} + (A+L+i)^{-1} - (B+L+i)^{-1} \right) \\
    ={} & 2i \left( (A+K+i)^{-1} - (A+L+i)^{-1} \right) + 2i \left( (A+L+i)^{-1} - (B+L+i)^{-1} \right).
    \end{aligned}
  \end{equation}
  By the second resolvent identity, we have
  \begin{equation}
    \begin{aligned}
    2i \big( (A+K+i)^{-1} - (A+L+i)^{-1}\big) ={} & 2i (A+K+i)^{-1} \left( (A+L+i) - (A+K+i) \right)(A+L+i)^{-1} \\
    ={} & 2i (A+K+i)^{-1} (L-K) (A+L+i)^{-1}.
    \end{aligned}
  \end{equation}
  Hence, we have
  \begin{equation}
    \operatornorm{2i \big( (A+K+i)^{-1} - (A+L+i)^{-1} \big)} = 2\operatornorm{(A+K+i)^{-1} (L-K) (A+L+i)^{-1}} \le 2\operatornorm{L-K}.
  \end{equation}
  Noting that both $(A+L+i)^{-1}(A+i)$ and $(B+i)(B+L+i)^{-1}$ extend to bounded operators, we have
  \begin{equation}
    \begin{aligned}
    & 2i \big((A+L+i)^{-1} - (B+L+i)^{-1} \big) \\
    ={} & (A+L+i)^{-1}(A+i) \cdot 2i \big( (A+i)^{-1} - (B+i)^{-1} \big) \cdot (B+i)(B+L+i)^{-1} \\
    ={} & (A+L+i)^{-1}(A+i) \cdot \left( \Cayleytransform(B) - \Cayleytransform(A) \right) \cdot (B+i)(B+L+i)^{-1}.
    \end{aligned}
  \end{equation}
  Thus, we obtain
  \begin{equation}
    \begin{aligned}
    & \operatornorm{2i ((A+L+i)^{-1} - (B+L+i)^{-1})} \\
    ={} & \operatornorm{(A+L+i)^{-1}(A+i)} \operatornorm{\Cayleytransform(B) - \Cayleytransform(A)} \operatornorm{(B+i)(B+L+i)^{-1}} \\
    \le{} & (1+\operatornorm{L}) \cdot \operatornorm{\Cayleytransform(B) - \Cayleytransform(A)} \cdot (1+\operatornorm{L}) \\
    ={} & (1+\operatornorm{L})^2 \operatornorm{\Cayleytransform(B) - \Cayleytransform(A)} .
    \end{aligned}
  \end{equation}
  This completes the proof.
\end{proof}

\section{Definition of spectral flow}\label{sec:sf}
  In our main theorems in Section~\ref{sec:main-theorem}, $K^1(I,\partial I)$
of an interval $I$ and its two endpoints $\partial I$, 
plays the essential role.
The elements of $K^1(I,\partial I)$ are classified by the spectral flow defined below.
In Section~\ref{sec:mod-2APS}, we also discuss the mod-two version of the
APS index of Dirac operators when it is real and skewsymmetric.
The corresponding mod-two spectral flow of lattice Dirac operators,
which is real, $\mathbb{Z}_2$-graded and selfadjoint, 
classifies elements of $KO^0(I, \partial I)$.
By abuse of notation, we also call an element of $K^1(I,\partial I)$  itself the spectral flow,
as well as an element of $KO^0(I,\partial I)$ the mod-two spectral flow.

Let $I=[-1,1]$ be an interval. Let $\mathcal{H} \to I$ 
be a Hilbert bundle over $I$ with fibre a separable Hilbert space over $\C$.
Suppose $\{h_t\}_{t\in I}$ 
is a Riesz-continuous one-parameter family of unbounded selfadjoint Fredholm  operators on $\mathcal{H}$
with $\mathrm{Ker}\;h_t =\{0\}$ for $t \in \partial I=\{-1,1\}$.
For each $t$, there exists $\Lambda_t$ such that $h_t$ has
  no spectrum in the range $\{ \lambda\;|\; -\Lambda_t \leq \lambda \leq \Lambda_t \}$
  except for finite eigenvalues with finite multiplicities.
Let us introduce a finite number of points 
  $t_0=-1 < t_1 < \cdots t_n=+1$ in $I$ for which
  we assign the values $\lambda_1, \ldots,\lambda_n$ 
  such that
\begin{enumerate}
\item
$\lambda_1=\lambda_n=0$.
\item
For any $t \in I$ in the range $t_{k-1} \leq t \leq t_k$, 
$\lambda_k$ is in the range $-\Lambda_t<\lambda_k<\Lambda_t$ and
is not an eigenvalue of $h_t$.
\end{enumerate}
For the $k$-th set $(t_k,\lambda_k)$ for $0<k<n$,
  we assign $\mathrm{sgn}_k$ and $d_k$ as follows.
  For $\lambda_k \neq \lambda_{k+1}$, we set
  \[
  \mathrm{sgn}_k = \frac{\lambda_k - \lambda_{k+1}}{|\lambda_k - \lambda_{k+1}|},
  \]
  and $d_k$ by sum of dimensions of the eigenspace
  with the eigenvalues in the range between $\lambda_k$ and $\lambda_{k+1}$.
  When $\lambda_k=\lambda_{k+1}$, we assign that $\mathrm{sgn}_k=0$ and $d_k=0$.
\begin{definition}[Spectral flow]
  Let $\HH$ be a complex Hilbert bundle over $I$
  and $\{h_t\}_{t\in I}$ be a Riesz-continuous family of unbounded selfadjoint Fredholm operators on $\HH$.
  The spectral flow of $\{h_t\}_{t\in I}$ is defined by
  $$\mathrm{sf}[\{h_t\}_{t\in I}]=\sum_{0<k<n} \mathrm{sgn}_k d_k.$$
It is known that (as explicitly shown in \cite{Aoki:2024sjc})
the definition is independent of the choice of the sets $\{t_k\}$ and $\{\lambda_k\}$,
and that $\mathrm{sf}[\{h_t\}_{t\in I}]$ can be identified as 
an element of $K^1(I,\partial I)$: 
there is an isomorphism from the element $[\{h_t\}_{t\in I}] \in K^1(I,\partial I)$ to $\mathrm{sf}[\{h_t\}_{t\in I}] \cong \mathbb{Z}$.

\end{definition}



When the Dirac operator is real and skewsymmetric,
every eigenvalue $\lambda$ makes a positive and negative pair: $\pm \lambda$.
Therefore, the APS index makes sense only modulo two.
For the corresponding spectral flow of the massive Dirac operators,
the standard spectral flow defined above is always zero and we need
a different way to evaluate the zero crossings of the paired spectrum modulo two.
The explicit definition of the mod-two spectral flow,
which classifies elements of $KO^0(I,\partial I)$,
is given in \cite{Carey2016SpectralFF, Fukaya:2020tjk}.
Here we just present its finite-dimensional version, since it is 
essential in extracting the mod-two APS index of lattice Dirac operators.

We consider a finite-dimensional real and  $\mathbb{Z}_2$-graded Hilbert space denoted by  $\hat{\mathcal{H}}_{\mathbb{R}}$.
Let $\{\hat{h}_t\}_{t \in I}$ be a smooth one-parameter family of real,
$\mathbb{Z}_2$-graded and selfadjoint operators
on $\hat{\mathcal{H}}_{\mathbb{R}}$.
We assume that $\hat{h}_{-1}$ and $\hat{h}_1$ are invertible.
Choose a real operator $V$ on $\hat{\mathcal{H}}_{\mathbb{R}}$ such that
\begin{equation}
 \hat{h}_1 = V^* \hat{h}_{-1} V.
\end{equation}
The mod-two spectral flow is defined by
\begin{equation}
  \mathrm{sf}_2[\{\hat{h}_t\}_{t \in I}] := \frac{1- \mathrm{sgn} \det V}{2}
  \; \in \mathbb{Z}_2.
\end{equation}

\section{Main theorem}
\label{sec:main-theorem}

Let $X := \T^d = (\R/\Z)^d$ be a $d$-dimensional flat torus and 
$\lX_a :=\lT$ be its lattice approximation with the lattice spacing $a=1/N$. 
We use the same setup as Section~\ref{sec:Dirac-operators} for the continuum and lattice Dirac operators
and the interpolating function  $\interpolator{a} : L^2(\lX_a, \lE)\to L^2(X,E)$ defined by ~(\ref{eq:from-lat-to-cont}). 
Let $Y \subset X$ be a separating submanifold that decomposes $X$ into the union of two compact manifolds
$X_+$ and $X_-$ which share the common boundary $Y$.

\subsection{Domain-wall mass term}

We define an $L^\infty$-function  $\kappa: X\to [-1,1]$ by $\kappa \equiv \pm 1$ on  $X_{\pm} \setminus Y$,
and its lattice version $\lkappa$ by the restriction $\kappa|_{\lX_a}$.
We also define a one-parameter family $\{\kappa_t : X\to [-1,1]\}_{t\in [-1,1]}$ as well as its lattice version by
\begin{equation}\label{eq:kappa_t}
 \kappa_t := \frac{1+t}{2}\kappa - \frac{1-t}{2}\id,\;\;\; \lkappa_t := \kappa_t|_{\lX_a}.
\end{equation}

\begin{proposition}\label{prop:DWproperty}
 There exists a constant $C$ such that
\begin{equation}
 || \kappa_t \interpolator{a}\phi - \interpolator{a}(\lkappa_t \phi)||_{L^2} \le C a^{1/d}||\phi||_{L_1^2}
\end{equation}
holds  for any  $\phi \in L^2(\lX_a, \lE_a)$.
\end{proposition}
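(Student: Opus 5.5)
The plan is to write the commutator explicitly, see that it lives in a tube of width $O(a)$ around $Y$, and then trade the small volume of that tube for a power of $a$ using H\"older and Sobolev.

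\textbf{Step 1: explicit form and support.} By the definitions,
\begin{equation}
  \big(\kappa_t \interpolator{a}\phi - \interpolator{a}(\lkappa_t \phi)\big)(x) = a^d\sum_{z\in\lX_a} \rho_a(x-z)\,\big(\kappa_t(x)-\kappa_t(z)\big)\,\paralleltransport_{x,z}\phi(z).
\end{equation}
By \eqref{eq:kappa_t}, $\kappa_t(x)-\kappa_t(z)=\frac{1+t}{2}(\kappa(x)-\kappa(z))$. This has modulus at most $2$ and vanishes unless $x$ and $z$ lie on opposite sides of $Y$ (or on $Y$ itself).

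Since $\rho_a(x-z)\neq 0$ forces $|x_i-z_i|<a$ for every $i$, the segment from $x$ to $z$ has length less than $\sqrt d\,a$. If $x$ and $z$ lie on opposite sides, this segment must meet $Y$. Hence the integrand is supported in the tube $N_a:=\{x\in X \mid \mathrm{dist}(x,Y)<\sqrt d\, a\}$.

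Using $|\paralleltransport_{x,z}\phi(z)|=|\phi(z)|$ and $\rho_a\ge 0$, we get the pointwise bound
\begin{equation}
  |\kappa_t \interpolator{a}\phi - \interpolator{a}(\lkappa_t \phi)|(x)\le 2\,\chi_{N_a}(x)\,\interpolator{a}|\phi|(x).
\end{equation}
Since $Y$ is a compact smooth hypersurface, the tubular neighbourhood theorem gives $\mathrm{vol}(N_a)\le C a$ for small $a$. For large $a$ the estimate is trivial, using the uniform $L^2$ bound on $\interpolator{a}$ from property~\ref{property:boundness}.

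\textbf{Step 2: H\"older plus Sobolev.} Set $f:=\interpolator{a}|\phi|$, a scalar, piecewise multilinear, Lipschitz function on $X$. For $d\ge 3$, take $p=2d/(d-2)$. H\"older gives
\begin{equation}
  \int_{N_a} f^2\,dx\le \mathrm{vol}(N_a)^{2/d}\,\|f\|_{L^p}^2 .
\end{equation}
The Sobolev embedding $L^2_1(X)\subset L^p(X)$ on the flat torus then yields $\|f\|_{L^2}$-restricted bound $\le C a^{2/d}\|f\|_{L^2_1}^2$. Taking square roots produces exactly the rate $a^{1/d}$.

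For $d=2$, any finite $p$ works and gives $a^{1-2/p}$, which is better than $a^{1/2}$ once $p\ge 4$. For $d=1$, use $L^2_1\subset L^\infty$. Since $a\le 1$, a faster rate still implies the claimed bound.

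\textbf{Step 3 (main obstacle): a uniform bound $\|\interpolator{a}|\phi|\|_{L^2_1}\le C\|\phi\|_{L^2_1}$.} I would view $|\phi|$ as a section of the trivial line bundle on the lattice with trivial link variables. Property~\ref{property:boundness} for $k=1$, applied to this trivial bundle, gives $\|\interpolator{a}|\phi|\|_{L^2_1}\le C\,\||\phi|\|_{L^2_1}$ uniformly in $a$.

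It then remains to bound the lattice $L^2_1$ norm of $|\phi|$ by that of $\phi$. This follows from the discrete Kato inequality, which is just the triangle inequality together with unitarity of the link variables:
\begin{equation}
  \big|\,|\phi(z+a\bm e_j)|-|\phi(z)|\,\big| = \big|\,|\paralleltransport_{z,z+a\bm e_j}\phi(z+a\bm e_j)|-|\phi(z)|\,\big| \le a\,|(\forwarddifferenceoperator_j\phi)(z)|.
\end{equation}

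If the cited Proposition~4.3 of \cite{Aoki:2024sjc} is stated only for smooth link variables, I would instead verify the bound directly. On each lattice cube, $f$ is multilinear, so $\partial_j f$ is a convex average of the differences $(|\phi(z+a\bm e_j)|-|\phi(z)|)/a$ over the cube's edges in direction $j$. Summing the squares over cubes gives $\int|\partial_j f|^2\le C\,a^d\sum_z |(\forwarddifferenceoperator_j\phi)(z)|^2$.

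Combining Steps 1--3 gives $\|\kappa_t\interpolator{a}\phi-\interpolator{a}(\lkappa_t\phi)\|_{L^2}\le C a^{1/d}\|\phi\|_{L^2_1}$, with $C$ independent of $t$ and $a$.
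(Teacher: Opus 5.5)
For $d\ge 3$ your argument is the paper's proof. You localize the discrepancy to a tube of width $O(a)$ around $Y$, use H\"older with the $L^d$ norm of the cutoff (which is $O(a^{1/d})$), use the Sobolev embedding $L^2_1\subset L^{2d/(d-2)}$, and use uniform $L^2_1$-boundedness of $\iota_a$ on $|\phi|$. Your discrete Kato inequality adds something: it justifies the constant $C_1$, which the paper asserts without comment.

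\textbf{The gap: the low-dimensional patch in Step 2.} There you compare the exponent of the \emph{squared} norm with the exponent required of the unsquared norm. For $d=2$, H\"older with finite $p$ gives $\int_{N_a} f^2\,dx\le C_p a^{1-2/p}\|f\|_{L^2_1}^2$, that is, $\|\chi_{N_a} f\|_{L^2}\le C_p a^{1/2-1/p}\|f\|_{L^2_1}$. This is strictly weaker than the required $a^{1/2}$ for every finite $p$. For $d=1$, the embedding $L^2_1\subset L^\infty$ gives only $a^{1/2}$ against the required $a^{1}$. So your proposal does not prove the statement for $d=2$, which is the main even-dimensional case of interest.

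The paper's own H\"older--Sobolev step has the same defect at $d=2$. There $1/2=1/d+1/p$ forces $p=\infty$, and $L^2_1\not\subset L^\infty$ in two dimensions; for piecewise multilinear $f$ one only gets $\|f\|_{L^\infty}\lesssim |\log a|^{1/2}\|f\|_{L^2_1}$. So following the paper more closely does not help.

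\textbf{The fix.} Use a trace-type estimate that invokes Sobolev only in the direction normal to $Y$.
In Fermi coordinates $(y,s)\in Y\times(-\delta_0,\delta_0)$, apply the one-dimensional embedding $H^1(-\delta_0,\delta_0)\subset L^\infty$ to $s\mapsto f(y,s)$ and integrate over $y$. This gives $\sup_{|s|<\delta_0}\int_Y |f(y,s)|^2\,dy\le C\|f\|_{L^2_1}^2$, hence $\int_{N_a}|f|^2\,dx\le C a\,\|f\|_{L^2_1}^2$ in every dimension.
Combined with your Steps 1 and 3, this yields $\|\kappa_t\iota_a\phi-\iota_a(\widehat{\kappa}_t\phi)\|_{L^2}\le C a^{1/2}\|\phi\|_{L^2_1}$. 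That implies the stated bound for all $d\ge 2$ and improves it for $d\ge 3$.

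The rate $a^{1/2}$ is sharp, and for $d=1$ the stated rate $a$ is false. Take the trivial bundle, $\phi\equiv 1$ (so $\|\phi\|_{L^2_1}=1$), $t=1$, and $Y$ avoiding the lattice. The difference is then $\kappa$ minus its piecewise-linear interpolant. This is of size one on a set of measure comparable to $a$ next to each point of $Y$, so the left-hand side is at least $c\,a^{1/2}$. The proof of Theorem~\ref{goal} only needs the left-hand side to be $o(1)\,\|\phi\|_{L^2_1}$, so the $a^{1/2}$ bound is the one you should prove.
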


Before proving Proposition \ref{prop:DWproperty}, let us introduce
another function $\bar{\kappa}_t: X\to [0,\infty)$ by
\begin{equation}
 \bar{\kappa}_t (x):= \sup\{|\lkappa_t(z)-\kappa_t(x)|\;| z\in \lX \mbox{ and } \rho_a(x-z)\neq 0\}. 
\end{equation}
Since $\lkappa_t(z)$ and $\kappa_t(x)$ are different only in the neighbourhood of the
domain-wall where $\bar{\kappa}_t$ takes almost a constant value,  the following propositions hold.
\begin{proposition}\label{prop:barkappa}
 For a non-negative integer $p$, there exists a constant $C_2$ such that
 \begin{equation}
  ||\bar{\kappa}_t||_{L^p} \le C_2 a^{1/p}
 \end{equation}
holds.
\end{proposition}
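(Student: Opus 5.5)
The plan is to bound $\bar\kappa_t$ pointwise, show that it vanishes away from a thin tubular neighbourhood of the domain-wall $Y$, and then estimate the volume of that neighbourhood.

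First, the pointwise bound. By \eqref{eq:kappa_t},
\begin{equation}
\kappa_t(x)-\lkappa_t(z) = \frac{1+t}{2}\big(\kappa(x)-\kappa(z)\big),
\end{equation}
so the $t$-dependence factors out. Since $\abs{(1+t)/2}\le 1$ for $t\in[-1,1]$, we get
\begin{equation}
0\le \bar\kappa_t(x)\le \sup\{\abs{\kappa(z)-\kappa(x)} \mid z\in\lX_a,\ \rho_a(x-z)\neq 0\}\le 2 .
\end{equation}
This bound is uniform in $t$, and the estimate will be proved for $\bar\kappa_t$ directly (no square or other power of it is needed).

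Second, the support. If $\rho_a(x-z)\neq 0$, then $\abs{x_i-z_i}<a$ for all $i$, so $\abs{x-z}<\sqrt d\,a$. If $x$ is at distance at least $\sqrt d\,a$ from $Y$, the straight segment from $x$ to any such $z$ does not meet $Y$. Then $x$ and $z$ lie in the same component of $X\setminus Y$, so $\kappa(x)=\kappa(z)$. We must be careful at points of $Y$ itself, where $\kappa$ is only an $L^\infty$ function. Those points form a null set (or they lie within distance $\sqrt d\,a$ of $Y$ anyway), so they do not affect the estimate. Hence
\begin{equation}
\operatorname{supp}\bar\kappa_t\subset N_a:=\{x\in X \mid \operatorname{dist}(x,Y)\le \sqrt d\,a\}.
\end{equation}

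Third, the volume. Since $Y$ is a compact smooth hypersurface, there is a constant $C_Y$ with
\begin{equation}
\operatorname{vol}(N_r)\le C_Y\, r \quad\text{for all } r>0 .
\end{equation}
For $r$ below the normal injectivity radius of $Y$ this follows from the tubular neighbourhood theorem, with $C_Y$ roughly $2\operatorname{vol}(Y)$ times a bound on the Jacobian of the normal exponential map. For larger $r$ we use the trivial bound $\operatorname{vol}(N_r)\le 1 = \operatorname{vol}(X)$, which costs only a bounded factor in $r$. Therefore, for $1\le p<\infty$,
\begin{equation}
\|\bar\kappa_t\|_{L^p}^p\le 2^p\operatorname{vol}(N_a)\le 2^p C_Y\sqrt d\, a ,
\end{equation}
which gives $\|\bar\kappa_t\|_{L^p}\le C_2 a^{1/p}$ with $C_2=2(C_Y\sqrt d)^{1/p}$, independent of $t$ and $a$.

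The case $p=0$ does not literally make sense as an $L^p$ norm. I would read the statement as concerning $p\ge1$, which is the case actually used for Proposition~\ref{prop:DWproperty} (presumably with $p=d$ or $p=2d$ in a H\"older step).

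The only step requiring genuine geometric input is the volume bound for the tube $N_a$. It is standard via the tubular neighbourhood theorem, but it must hold uniformly for all small $a$ and for a possibly disconnected $Y$ without a product collar. I expect this to be the main point to state carefully.
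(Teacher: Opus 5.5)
Your proof is correct and follows the same route as the paper. You bound $\bar{\kappa}_t \le 2$ pointwise, show it vanishes outside an $O(a)$-neighbourhood of $Y$ (the paper's $N_{la}(Y)$, your $N_a$ with $l=\sqrt{d}$), and use that this tube has volume $O(a)$; the paper writes $o(a)$, but $O(a)$ is what is meant, and your segment argument and restriction to $p \ge 1$ (the paper only uses $p=d$) spell out details the paper leaves implicit.
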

\begin{proof}
There exists $l>0$ such that for any $y \in Y$ and $x \in X$
if $\rho_a(x-y)\neq 0$ then $|x-y|< la$ for any $a$. 
Let $N_{la}(Y)$  be the $(la\mbox{-})$neighbourhood of the domain-wall $Y$
given by $N_{la}(Y) =\{x\in X\; |\; \exists y \in Y,\; |x-y|< la \}$.
Noting that the maximal value of $ \bar{\kappa}_t$ is two, 
and the volume of $N_{la}(Y)$ is $o(a)$,
we have an inequality
\begin{equation}
 \int_X | \bar{\kappa}_t|^p dx \le  C_2'la.
\end{equation}
with a $p$ dependent constant $C_2'$. Then the next proposition follows.
\end{proof}

\begin{proposition}\label{prop:barkappa2}
 For $\phi \in L^2(\lX_a, \lE_a)$, the following inequality holds. 
\begin{equation}
 || \kappa_t \interpolator{a}\phi - \interpolator{a}(\lkappa_t \phi)||_{L^2} \le ||\bar{\kappa}_t \interpolator{a}(|\phi|)||_{L^2}.
\end{equation}
\end{proposition}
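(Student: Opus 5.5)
The inequality will be proved pointwise in $x$ and then integrated; no estimate beyond the triangle inequality is needed. The plan is to write both terms with the definition \eqref{eq:from-lat-to-cont} of $\interpolator{a}$ and take their difference. Fix $x\in X$. Since $\kappa_t(x)$ is a scalar, it commutes with the parallel transport and can be placed inside the sum. This gives
\begin{equation}
  \kappa_t(x)(\interpolator{a}\phi)(x) - (\interpolator{a}(\lkappa_t\phi))(x)
  = a^d\sum_{z\in\lX_a}\rho_a(x-z)\bigl(\kappa_t(x)-\lkappa_t(z)\bigr)\paralleltransport_{x,z}\phi(z).
\end{equation}

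Next I take the fibre norm of this expression at $x$ and apply the triangle inequality. I also use three facts:
\begin{itemize}
  \item $\rho_a\ge 0$;
  \item $\paralleltransport_{x,z}$ is unitary, being parallel transport of a Hermitian connection, so $|\paralleltransport_{x,z}\phi(z)|=|\phi(z)|$;
  \item only the terms with $\rho_a(x-z)\neq 0$ contribute.
\end{itemize}
For each such $z$, the definition of $\bar{\kappa}_t$ as a supremum over exactly those lattice points gives $|\kappa_t(x)-\lkappa_t(z)|\le\bar{\kappa}_t(x)$. This bound does not depend on $z$, so it factors out of the sum, and the remaining sum is by definition $\interpolator{a}|\phi|(x)$. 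Hence
\begin{equation}
  \bigl|\kappa_t(x)(\interpolator{a}\phi)(x) - (\interpolator{a}(\lkappa_t\phi))(x)\bigr|
  \le \bar{\kappa}_t(x)\, a^d\sum_{z}\rho_a(x-z)|\phi(z)| = \bar{\kappa}_t(x)\,\interpolator{a}|\phi|(x).
\end{equation}

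Finally I square this pointwise bound and integrate over $X$, which yields the stated $L^2$ inequality. The argument needs only a little care at points $x$ lying on the domain wall $Y$, where $\kappa_t$ is merely $L^\infty$. This is harmless because $Y$ has measure zero and the bound holds for almost every $x$. I expect no real obstacle; the only point needing attention is that $\kappa_t(x)$ and $\lkappa_t(z)$ are scalars, which justifies pulling them through $\paralleltransport_{x,z}$.
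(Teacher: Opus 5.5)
Your proposal is correct and follows the paper's proof: expand both terms with the definition of $\iota_a$ and pull the scalar $\kappa_t(x)$ inside the sum. Then use the triangle inequality together with $\rho_a\ge 0$ and $|\paralleltransport_{x,z}\phi(z)|=|\phi(z)|$, bound $|\kappa_t(x)-\lkappa_t(z)|$ by $\bar{\kappa}_t(x)$, and integrate the squared pointwise estimate. Your version also correctly keeps the $a^d$ normalization factor, which the paper's displayed identity drops.
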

\begin{proof}
 For $x\in X$, the explicit computation shows
\begin{equation}
 (\kappa_t \interpolator{a}\phi - \interpolator{a}(\lkappa_t \phi))(x) = \sum_{z\in \lX} \rho_a(x-z)(\kappa_t(x)-\lkappa_t(z))
\paralleltransport_{x,z}\phi(z).
\end{equation}
Therefore, we have the following inequalities
\begin{align}
 |\kappa_t \interpolator{a}\phi - \interpolator{a}(\lkappa_t \phi)|(x) & \le
 \sum_{z\in \lX} \rho_a(x-z)|\kappa_t(x)-\lkappa_t(z)|
|\paralleltransport_{x,z}\phi(z)|\\
& \le 
\bar{\kappa}_t(x) \sum_{z\in \lX} \rho_a(x-z)|\phi(z)| = \bar{\kappa}_t\interpolator{a}(|\phi|)(x),
\end{align}
which leads to the proposition above.
\end{proof}

\begin{proposition}\label{prop:barkappa3}
There exists a constant $C_3$ such that
 \begin{equation}
  ||\bar{\kappa}_t \interpolator{a}(|\phi|)||_{L^2} \le C_3 ||\bar{\kappa}_t||_{L^d}\cdot ||\interpolator{a}(|\phi|)||_{L_1^2}
 \end{equation}
holds for any $\phi \in L^2(\lX_a, \lE_a)$, where $d$ is the dimension of $X$.
\end{proposition}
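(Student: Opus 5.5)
This is a Hölder inequality followed by a Sobolev embedding on the $d$-dimensional torus. The function $g := \interpolator{a}(|\phi|)$ is a nonnegative, piecewise multilinear, continuous, hence Lipschitz, function on $X$, so it lies in the scalar Sobolev space $L^2_1(X)$. The plan is to write
\begin{equation}
  \|\bar{\kappa}_t\, g\|_{L^2} \le \|\bar{\kappa}_t\|_{L^d}\,\|g\|_{L^{r}}, \qquad \frac{1}{2} = \frac{1}{d} + \frac{1}{r},
\end{equation}
so that $r = 2d/(d-2)$ for $d\ge 3$. This is exactly the critical Sobolev exponent, and on the compact flat torus we have $\|g\|_{L^{2d/(d-2)}} \le C_S \|g\|_{L^2_1}$. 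Here $L^2_1$ is taken with the norm of Section~\ref{sec:Dirac-operators}, whose factor $1/m_0^2$ only changes the constant. Combining the two inequalities gives the claim with $C_3 = C_S$.

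The low-dimensional cases need separate treatment. For $d=2$ the exponent becomes $r=\infty$, and the embedding $L^2_1\subset L^\infty$ fails. I would replace it by Hölder with $\|\bar{\kappa}_t\|_{L^{p}}$ for $p$ slightly larger than $2$ together with $L^2_1\subset L^{r}$ for every finite $r$. Alternatively, I would use that $\bar\kappa_t$ is bounded by $2$ and supported in $N_{la}(Y)$, and apply a trace-type inequality on the thin neighbourhood, $\int_{N_{la}(Y)}|g|^2\le C a\|g\|_{L^2_1}^2$. That bound is in fact what is needed for Proposition~\ref{prop:DWproperty}, and it holds in every dimension. I expect this $d=2$ borderline to be the main obstacle if the statement is read literally with exponent $d$. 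For even $d\ge 4$, which is the paper's setting, the Sobolev route works directly. The case $d=1$ is excluded since $d$ is even.

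A remaining point is that the Sobolev constant should be uniform, and it is: it depends only on $X$, not on $a$. No lattice structure enters beyond $g\in L^2_1(X)$. The subsequent combination with property~(\ref{property:boundness}), namely $\|\interpolator{a}|\phi|\|_{L^2_1}\le C\||\phi|\|_{L^2_1}\le C\|\phi\|_{L^2_1}$, is not part of this proposition. It uses Kato's inequality $|\forwarddifferenceoperator_j|\phi||\le|\forwarddifferenceoperator_j\phi|$, which follows from the triangle inequality and the unitarity of the link variables.
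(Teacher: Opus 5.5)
For $d\ge 3$ your proof is the paper's proof: H\"older with $1/2=1/d+1/r$, then the Sobolev embedding $L^2_1\subset L^{r}$ on the torus, with a constant independent of $a$, $t$ and $\phi$.

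Your caveat about $d=2$ is correct, and the paper's proof passes over it silently. The paper applies the same two steps with $r=\infty$, where $L^2_1\not\subset L^\infty$, so as written its argument does not cover the two-dimensional case.

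Your trace-type estimate is the right repair. It uses $\int_{N_{la}(Y)}|g|^2\le Ca\,\|g\|_{L^2_1}^2$, together with $\bar\kappa_t\le 1+t$ and the fact that $\bar\kappa_t$ vanishes outside $N_{la}(Y)$. This gives Proposition~\ref{prop:DWproperty} with rate $a^{1/2}$ in every dimension, including the odd ones used in Section~\ref{sec:mod-2APS}, and that is all the proof of Theorem~\ref{goal} uses. Combined with the lower bound $\|\bar\kappa_t\|_{L^2}\ge c(1+t)a^{1/2}$, which comes from the lattice cubes that straddle $Y$, it also recovers the stated $d=2$ inequality.
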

\begin{proof}
 According to the H\"older inequality and Sobolev inequality
there exists a constant $C_3$ such that
\begin{align}
 ||\bar{\kappa}_t \interpolator{a}(|\phi|) ||_{L^2} \le ||\bar{\kappa}_t||_{L^d}\cdot ||\interpolator{a}(|\phi|)||_{L^p} 
\le ||\bar{\kappa}_t||_{L^d}\cdot C_3 ||\interpolator{a}(|\phi|)||_{L^2_1},
\end{align}
holds where $p$ satisfies $1/2=1/d+1/p$.
\end{proof}

From the above Propositions.~\ref{prop:barkappa},\ref{prop:barkappa2} and \ref{prop:barkappa3},
we have
\begin{align}
 || \kappa_t \interpolator{a}\phi - \interpolator{a}(\lkappa_t \phi)||_{L^2} 
 &\le ||\bar{\kappa}_t \interpolator{a}(|\phi|)||_{L^2}\\
 &\le C_3 ||\bar{\kappa}_t||_{L^d}\cdot ||\interpolator{a}(|\phi|)||_{L_1^2}\\
 &\le C_2 C_3 a^{1/d} ||\interpolator{a}(|\phi|)||_{L_1^2}\\
 &\le C_2 C_3 a^{1/d} C_1 ||\phi||_{L_1^2},
\end{align}
with a positive constant $C_1$, which proves the Proposition~\ref{prop:DWproperty}.


\subsection{Main theorem}
Let $I=[-1,1]$ be a line segment parametrized by $t$
whose two end points are denoted by $\partial I=\{-1,1\}$.
We compare two one-parameter families 
of the continuum and lattice domain-wall fermion Dirac operators:
\begin{equation}
 \{D-m\kappa_t \gamma\}_{t\in I},\;\;\; \{\WilsonDiracoperator-m\lkappa_t\gamma \}_{t\in I}, 
\end{equation}
with $\kappa_t$ and $\lkappa_t$ defined in the previous subsection.
We assume that at $t=1$ $D-m\kappa_1 \gamma$ is invertible.
$D-m\kappa_{-1} \gamma$ is also invertible which trivially follows
from $(D-m\kappa_{-1}\gamma)^2=D^2+m^2$.
From a general argument, we can show that $D-m\kappa_t \gamma$ is a Fredholm operator at any value of $t$.

According to Proposition~\ref{theorem: minus-variant2}, 
the unbounded continuum operator $D-m\kappa_t \gamma$ is Riesz-continuous 
with respect to $t$, since they at different $t$ differ only by the mass term $m\kappa_t \gamma$,
which is a compact bounded operator.
Therefore, the family $\{D-m\kappa_t \gamma\}_{t\in I}$ 
can be regarded as an element of $K^1(I,\partial I)$,
which is classified by the spectral flow
\begin{equation}
 \mathrm{sf}[D-m\kappa_t\gamma] 
\in \mathbb{Z}.
\end{equation}
Then the question is if the lattice Dirac operator family
 $\{\WilsonDiracoperator-m\lkappa_t\gamma \}_{t\in I}$
can be identified as a well-defined element of $K^1(I,\partial I)$ or not.

Let us define a continuum-lattice combined domain-wall fermion Dirac operator 
$D_a^{\rm cmb}(m,t,s): \Gamma(E)\oplus \Gamma(\lE_a)\to  \Gamma(E)\oplus \Gamma(\lE_a)$ by
\begin{align}
 D_a^{\rm cmb}(m,t,s) &:=\left(
    \begin{array}{cc}
       D-m\kappa_t\gamma  & s\interpolator{a}\\
       s\interpolator{a}^*   &   -(\WilsonDiracoperator-m\lkappa_t\gamma)
    \end{array}
  \right)
\\
&=
\left(
    \begin{array}{cc}
       D  & 0\\
       0  &   -\WilsonDiracoperator
    \end{array}
  \right)
-
m 
\left(
    \begin{array}{cc}
        \kappa_t\gamma & 0\\
      0  &   -\lkappa_t\gamma
    \end{array}
  \right)
+s
\left(
    \begin{array}{cc}
        0 & \interpolator{a}\\
       \interpolator{a}^*   &  0
    \end{array}
  \right)
\end{align}
with an additional parameter $s\in [0,1]$.

\begin{theorem}\label{goal}
Fix a staple-shaped parameter region $P$ in the $t$-$s$ plane
drawn in Figure~\ref{fig:st} starting from $(s,t)=(-1,0)$, via $(-1,1)$, $(1,1)$ then to $(1,0)$.
There exists a constant $a_1$ and $m_1$ such that for arbitrary 
lattice spacing $a=1/N$ satisfying $0< a \le a_1$, and arbitrary mass parameter $m>m_1$
$D_a^{\rm cmb}(m,t,s)$ is invertible at any point $(s,t)$ on $P$.
\end{theorem}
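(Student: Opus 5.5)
The plan is to reduce invertibility on $P$ to a compactness argument in the continuum limit, in the spirit of the proof in \cite{Aoki:2024sjc}. I read $P$ as the union of three segments: $\{t=-1,\ s\in[0,1]\}$, $\{s=1,\ t\in[-1,1]\}$ and $\{t=1,\ s\in[0,1]\}$. The argument should therefore treat all points $(t,s)\in P$ uniformly.

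\textbf{Setting up the contradiction.} I will argue by contradiction. Suppose there are sequences $a_n\to 0$ and $(t_n,s_n)\in P$, with $(t_n,s_n)\to(t_\infty,s_\infty)\in P$ after passing to a subsequence since $P$ is compact, and unit vectors $(\psi_n,\phi_n)\in L^2(E)\oplus L^2(\lE_{a_n})$ in $\Ker D_{a_n}^{\rm cmb}(m,t_n,s_n)$. The kernel equations read
\begin{equation}
(D-m\kappa_{t_n}\gamma)\psi_n+s_n\interpolator{a_n}\phi_n=0,\qquad s_n\interpolator{a_n}^*\psi_n-(\WilsonDiracoperator_{a_n}-m\lkappa_{t_n}\gamma)\phi_n=0 .
\end{equation}

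\textbf{Uniform $L^2_1$ bounds.} Since $m\kappa_t\gamma$ and $m\lkappa_t\gamma$ are bounded by $m$, and $\interpolator{a}$, $\interpolator{a}^*$ are uniformly bounded (property 2), these equations give uniform $L^2$ bounds on $D\psi_n$ and on $\WilsonDiracoperator_{a_n}\phi_n$. The continuum elliptic estimate and the lattice estimate \cite{Aoki:2024sjc}[Theorem 4.7], $\sum_j\|\forwarddifferenceoperator_j\phi\|^2\le 2\|\WilsonDiracoperator_a\phi\|^2+C\|\phi\|^2$, then give uniform $L^2_1$ bounds on $\psi_n$ and $\phi_n$. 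By property 2 again, $\interpolator{a_n}\phi_n$ is bounded in $L^2_1(E)$. Rellich compactness then yields strong $L^2$ limits $\psi_n\to\psi_\infty$ and $\interpolator{a_n}\phi_n\to\phi_\infty$, after passing to subsequences. Property 3 gives $\|\interpolator{a}^*\interpolator{a}\phi-\phi\|^2\le Ca\|\phi\|^2_{L^2_1}$, and pairing with $\interpolator{a}$ shows $\|\phi_n\|_{L^2}-\|\interpolator{a_n}\phi_n\|_{L^2}\to0$. Hence $\|\psi_\infty\|^2+\|\phi_\infty\|^2=1$, so the limit pair is nonzero.

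\textbf{Passing to the limit in the equations.} I will test the kernel equations against smooth sections, using $\interpolator{a}^*\psi'$ for the lattice equation. The first equation passes to the limit directly, with $\kappa_{t_n}\to\kappa_{t_\infty}$ uniformly in $t$. For the second, I apply $\interpolator{a_n}$, or equivalently pair with $\interpolator{a_n}^*\psi'$, and handle each term as follows.
\begin{itemize}
\item The Wilson term converges by property 5.
\item The interpolator term $\interpolator{a}\interpolator{a}^*\psi_n$ converges by property 4.
\item The lattice mass term is the main new ingredient. Proposition~\ref{prop:DWproperty} gives $\|\kappa_t\interpolator{a}\phi-\interpolator{a}(\lkappa_t\phi)\|\le Ca^{1/d}\|\phi\|_{L^2_1}$, so $\interpolator{a_n}(\lkappa_{t_n}\gamma\phi_n)$ may be replaced by $\kappa_{t_n}\gamma\interpolator{a_n}\phi_n$ up to $o(1)$. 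Here I also use that $\gamma$ commutes with $\paralleltransport$ and hence with $\interpolator{a}$.
\end{itemize}
This yields, weakly and hence in the distributional sense,
\begin{equation}
(D-m\kappa_{t_\infty}\gamma)\psi_\infty+s_\infty\phi_\infty=0,\qquad s_\infty\psi_\infty-(D-m\kappa_{t_\infty}\gamma)\phi_\infty=0 .
\end{equation}
Eliminating $\psi_\infty$ gives $\big((D-m\kappa_{t_\infty}\gamma)^2+s_\infty^2\big)\phi_\infty=0$. The case analysis on $P$ then finishes the argument.
\begin{itemize}
\item If $s_\infty=1$, the operator $(D-m\kappa_{t_\infty}\gamma)^2+1$ is positive, so $\phi_\infty=0$ and then $\psi_\infty=0$.
\item If $s_\infty<1$, then $t_\infty=\pm1$. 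The operator $D-m\kappa_{\pm1}\gamma$ is invertible: by hypothesis at $t=1$, and at $t=-1$ because $(D+m\gamma)^2=D^2+m^2$. Again $\phi_\infty=\psi_\infty=0$.
\end{itemize}
In either case this contradicts $\|\psi_\infty\|^2+\|\phi_\infty\|^2=1$. Therefore some $a_1>0$ works uniformly on $P$.

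The threshold $m_1$ is not really needed in this argument; I would simply take it to be the constant of Theorem~\ref{thm:FFMOYYg}. One point to check is that $\kappa_1=\kappa$ is the profile whose invertibility is assumed, for all $m>m_1$. Another is that the elliptic and Sobolev constants are uniform in $t\in I$, which holds because the mass term is bounded by $m$.

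I expect the main obstacle to be the limit of the lattice mass term across the discontinuous domain wall, that is, justifying that $\interpolator{a}\lkappa_t\phi_n$ and $\kappa_t\interpolator{a}\phi_n$ have the same limit. This relies on the $L^2_1$ bound on $\phi_n$ together with the $a^{1/d}$ smallness of Proposition~\ref{prop:DWproperty}. A secondary delicate point is preventing norm loss in the lattice component, which is handled by property 3.
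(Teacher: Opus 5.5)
Your proposal is correct and follows essentially the same route as the paper's proof. Both argue by contradiction along $a_n\to0$ on the compact path $P$, get uniform $L^2_1$ bounds from the continuum and Wilson elliptic estimates, use Rellich compactness plus property 3 to keep $\|\psi_\infty\|^2+\|\phi_\infty\|^2=1$, pass to the limit via properties 4 and 5 and Proposition~\ref{prop:DWproperty}, and conclude from injectivity of $(D-m\kappa_{t_\infty}\gamma)^2+s_\infty^2$ on $P$.

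There are two small points. First, the Wilson-term limit needs property 3 once more: in $\langle\phi_n,(\WilsonDiracoperator_{a_n})^*\interpolator{a_n}^*\Psi\rangle$ you must replace $\phi_n$ by $\interpolator{a_n}^*\interpolator{a_n}\phi_n$ before invoking property 5, exactly as the paper does. Second, as in the paper, $m$ is held fixed throughout the compactness argument, so what you obtain is $a_1=a_1(m)$ rather than an $a_1$ uniform in $m>m_1$. Uniformity could not hold anyway: for fixed $a$ and $ma\gg1$, generically $\eta(\WilsonDiracoperator-m\lkappa_t\gamma)=0$ at $t=\pm1$, so the lattice spectral flow vanishes, whereas the continuum one equals $\gAPSindex(\restrictedTo{D}{X_+})$.
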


\begin{figure*}[tbh]
  \centering
  \includegraphics[width=10cm]{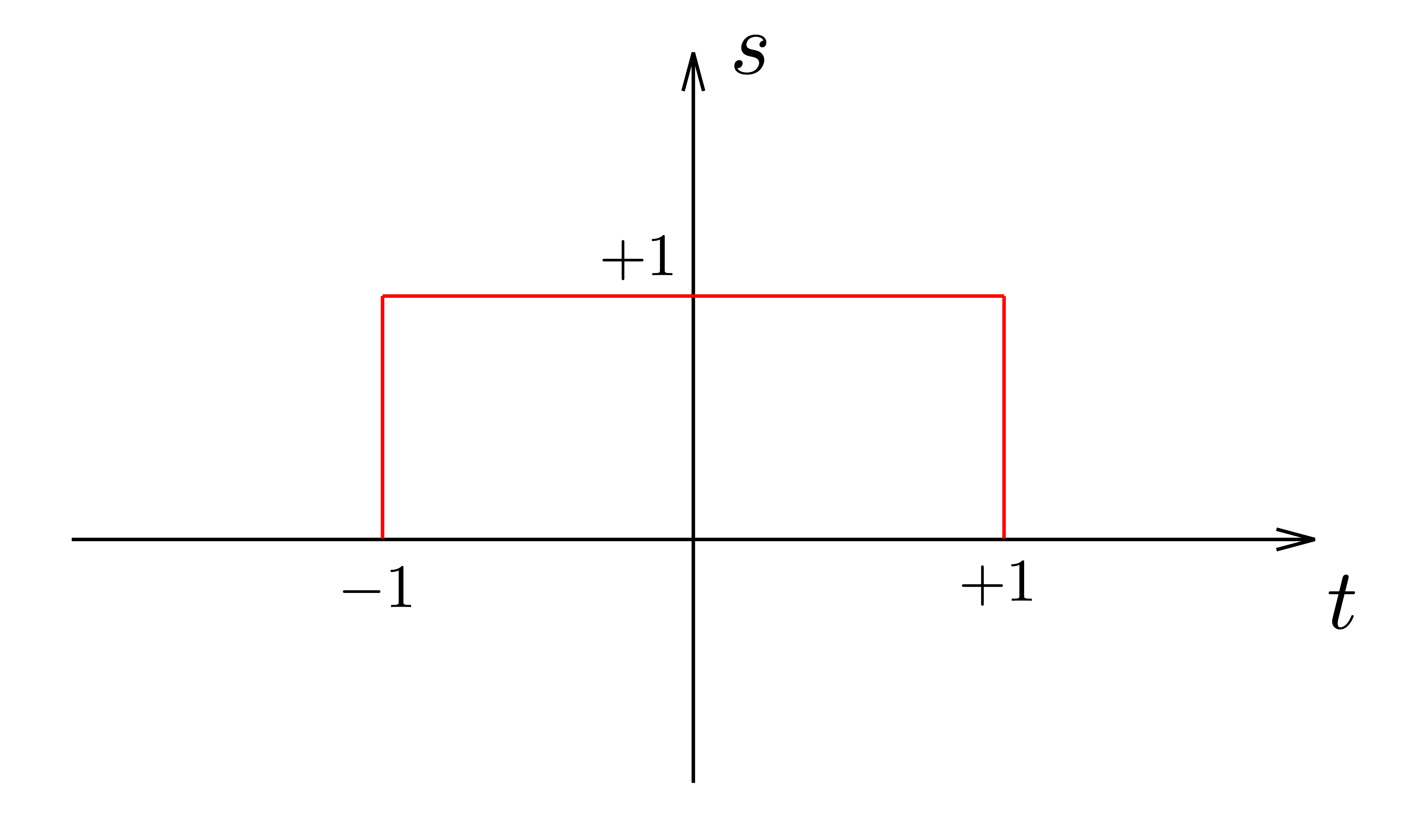}
  \caption{
 The staple-shaped parameter region in the $t$-$s$ plane
 where we prove that the lattice-continuum combined 
Dirac operator $D^{\rm cmb}(m,t,s)$ is invertible.
  }
  \label{fig:st}
\end{figure*}

Then the corollary of Theorem~\ref{goal} below and our main theorem follow.
\begin{corollary}
The spectral flow 
$\mathrm{sf}[\WilsonDiracoperator-m\lkappa_t\gamma]$ is
well-defined as an element of $K^1(I,\partial I)$,
where $I=[-1,1]$ and $\partial I=\{-1,1\}$.
\end{corollary}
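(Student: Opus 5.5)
The plan is to obtain the corollary from Theorem~\ref{goal} together with Theorem~\ref{theorem: minus-variant2}. The lattice family $\{\WilsonDiracoperator-m\lkappa_t\gamma\}_{t\in I}$ acts on the finite-dimensional bundle $I\times\Gamma(\lE_a)$. Since $\lkappa_t$ is affine in $t$, the family is norm-continuous, and every operator in it is trivially Fredholm. What remains for it to define a $K^1_{\bounded}(I,\partial I)\cong K^1_{\Riesz}(I,\partial I)$ cocycle is that it is invertible at $t=\pm1$. Once this holds, its class is identified with $[\{D-m\kappa_t\gamma\}]$, and its spectral flow is then well defined and equal to the continuum one.

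First I will extract invertibility at the endpoints. Take the side segments of the staple $P$ at $t=\pm1$ and look at $s=0$. There $D_a^{\rm cmb}(m,\pm1,0)=(D-m\kappa_{\pm1}\gamma)\oplus\bigl(-(\WilsonDiracoperator-m\lkappa_{\pm1}\gamma)\bigr)$ is block diagonal. By Theorem~\ref{goal} (for $0<a\le a_1$ and $m>m_1$) this operator is invertible, so each diagonal block is invertible. In particular $\WilsonDiracoperator-m\lkappa_{\pm1}\gamma$ has trivial kernel. At $t=-1$ this can also be seen directly: $\lkappa_{-1}=-1$, and $\{\naiveDiracoperator,\gamma\}=0$, so $(\WilsonDiracoperator+m\gamma)^2=(\naiveDiracoperator)^2+(\Wilsonterm+m)^2\ge m^2$ because $\Wilsonterm\ge0$. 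Hence the lattice family is a bounded $K$-cocycle on $(I,\partial I)$ with $p=1$, $q=-1$ (no Clifford action). The definition of spectral flow in Section~\ref{sec:sf} then applies and gives an integer, which corresponds to an element of $K^1(I,\partial I)$ as stated there.

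To identify the class, I will apply Theorem~\ref{theorem: minus-variant2} with $\alpha'=\{D-m\kappa_t\gamma\}$, $\alpha=\{\WilsonDiracoperator-m\lkappa_t\gamma\}$ and $f_t=\interpolator{a}$ (or a rescaling $s\interpolator{a}$ with $s$ fixed). Hypotheses (1) and (2) hold because $t$-differences are the bounded multiplication operators $m(\kappa_t-\kappa_{t_0})\gamma$, which depend continuously on $t$ in norm. Hypothesis (3) is vacuous. Hypothesis (4) is the invertibility of $D_a^{\rm cmb}(m,t,s)$ along the horizontal part of $P$ for all $t\in I$; this is where Theorem~\ref{goal} is used. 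The theorem gives $[\alpha']=[\alpha]$, and hence $\mathrm{sf}[\WilsonDiracoperator-m\lkappa_t\gamma]=\mathrm{sf}[D-m\kappa_t\gamma]$.

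The one delicate point is matching the parametrisation of $P$ with the hypotheses. The figure's labelling of $s$ and $t$ is ambiguous: the two legs of the staple must contain the $s=0$, $t=\pm1$ points, and the horizontal edge must be the nonzero-$s$ segment covering all $t\in I$. I expect this bookkeeping, rather than any analysis, to be the main obstacle. All the substantive estimates are contained in Theorem~\ref{goal}.
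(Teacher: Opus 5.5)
Your proposal is correct and follows the paper's route. The paper gives no separate proof of the corollary; it reads it off from Theorem~\ref{goal}. At the $s=0$ ends of the legs $t=\pm1$ of the staple, $D_a^{\rm cmb}$ is block diagonal, so the norm-continuous family $\{\WilsonDiracoperator-m\lkappa_t\gamma\}_{t\in I}$ on the finite-dimensional space $\Gamma(\lE_a)$ is invertible at $t=\pm1$ and defines a class in $K^1(I,\partial I)$. The identification with the continuum class is Theorem~\ref{goalgoal} and is not needed for the corollary itself.

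Two slips should be corrected.

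\textbf{Degree label.} A family with trivial Clifford action has $(p,q)=(0,-1)$, since $n=p-q=1$. Your $(1,-1)$ has degree $2$ and would require a generator $c(e_1)$ anticommuting with $h$.

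\textbf{Direct check at $t=-1$.} Your computation drops a cross term. $\Wilsonterm$ commutes with $\gamma$, but not with $\naiveDiracoperator$ when the link variables are nontrivial, so $(\WilsonDiracoperator+m\gamma)^2=(\naiveDiracoperator)^2+(\Wilsonterm+m)^2+\gamma[\Wilsonterm,\naiveDiracoperator]$. The conclusion still holds by a different argument. Write $\gamma(\WilsonDiracoperator+m\gamma)=(\Wilsonterm+m)+\gamma\naiveDiracoperator$; since $\gamma\naiveDiracoperator$ is skew-adjoint and $\Wilsonterm\ge 0$, this gives $\|(\WilsonDiracoperator+m\gamma)v\|\ge m\|v\|$.

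Finally, on the staple bookkeeping you flagged: the proof of Theorem~\ref{theorem: minus-variant2} uses the homotopy $\tau\mapsto\begin{pmatrix} h' & \tau f\\ \tau f^* & -h\end{pmatrix}$. For it to be relative to $\partial I$, you need invertibility on the whole legs $t=\pm1$, $s\in[0,1]$, not just at $s=0$. Hypothesis (4) at $s=1$ does not imply this, but Theorem~\ref{goal} supplies it.
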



\begin{theorem}[APS Index of lattice Dirac operator]\label{goalgoal}
For any lattice spacing $a=1/N<a_1$, the following holds.
\[
\mathrm{sf}[D-m\kappa_t\gamma] = \mathrm{sf}[\WilsonDiracoperator-m\lkappa_t\gamma]\in K^1(I,\partial I)\cong \mathbb{Z},
\]
where $I=[-1,1]$ and $\partial I=\{-1,1\}$.
\end{theorem}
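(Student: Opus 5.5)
The plan is to deduce Theorem~\ref{goalgoal} from Theorem~\ref{goal} via the criterion of Theorem~\ref{theorem: minus-variant2}, applied on the base space $I=[-1,1]$ with $A=\partial I$. I take
\[
\alpha=(L^2(E),\,c,\,\{D-m\kappa_t\gamma\}_{t\in I}),\qquad \alpha'=(\Gamma(\lE_a),\,c,\,\{\WilsonDiracoperator-m\lkappa_t\gamma\}_{t\in I}).
\]
These are regarded as $K^{1,0}$-type cocycles (degree $1$, with $q=0$ and $p=1$, after the standard identification of selfadjoint Fredholm families with $K^1$). I put the continuum family in the role of $-\alpha$, matching the sign convention of the lower-right block of $D_a^{\rm cmb}$; concretely I would swap the roles so that the combined operator is exactly $D_a^{\rm cmb}(m,t,s)$ with $f_t=s\interpolator{a}^*$ or its adjoint. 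Invertibility of the blocks at $t=\pm1$ comes from two sources. At $t=-1$ one has $(D+m\gamma)^2=D^2+m^2$, and the analogous relation for the Wilson operator follows from $\{\gamma,\naiveDiracoperator\}=0$ together with positivity of $W$. At $t=1$ I would read invertibility off Theorem~\ref{goal}: at $(s,t)=(0,\pm1)$ on the staple $P$ the combined operator is block diagonal, so each block is invertible. This shows the lattice family defines an element of $K^1(I,\partial I)$, which is the Corollary.

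Next I verify the hypotheses of Theorem~\ref{theorem: minus-variant2}. Conditions (1)--(2) hold because, in $t$, both families differ from their value at any $t_0$ only by the bounded term $m(\kappa_t-\kappa_{t_0})\gamma$. This term is affine in $t$ and hence norm continuous, and on the finite-dimensional lattice side everything is trivially bounded. Condition (3) is vacuous for $q=0$, $p=1$: I only need $\interpolator{a}$ to intertwine the Clifford generator implementing the grading doubling, which holds because $\interpolator{a}$ uses parallel transport and commutes with $\gamma$. Condition (4) asks that the combined operator with off-diagonal coupling $f_t$ have trivial kernel for all $t\in I$.

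The main obstacle is condition (4). Theorem~\ref{goal} gives invertibility only on the staple $P$, not on the segment $s=1$, $t\in I$. I would therefore not apply Theorem~\ref{theorem: minus-variant2} directly but reparametrize. Map $I$ homeomorphically onto the path $P$ running from $(s,t)=(0,-1)$ up to $s=1$ and across. Along the two legs $t=\pm1$, $s$ varies with $t$ fixed, and the diagonal blocks are invertible there. The homotopy argument in the proof of Proposition~\ref{proposition: key proposition} goes through with $\widetilde{h}$ defined by deforming the coupling along $P$ instead of linearly in $t$. Equivalently, $D_a^{\rm cmb}$ restricted to $P$ together with the endpoint segments yields a homotopy in $K^1(I,\partial I)$ between $\alpha'\oplus(-\alpha)$ at $s=0$ and an everywhere invertible family, hence $0$. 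The delicate point is Riesz continuity of $(s,t)\mapsto D_a^{\rm cmb}(m,t,s)$. This follows from Proposition~\ref{proposition: Riesz continuity criterion}, since the dependence is through the bounded operators $\interpolator{a}$ and $\kappa_t\gamma$.

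Finally, $[\alpha']=[\alpha]$ in $K^1(I,\partial I)\cong\Z$, and the isomorphism to $\Z$ is the spectral flow of Section~\ref{sec:sf}. This gives $\mathrm{sf}[\WilsonDiracoperator-m\lkappa_t\gamma]=\mathrm{sf}[D-m\kappa_t\gamma]$ for all $a<a_1$ and $m>m_1$, which is Theorem~\ref{goalgoal}. Combined with Theorem~\ref{thm:FFMOYYg} (taking $m$ also larger than its $m_0$), the lattice spectral flow equals $\gAPSindex(D|_{X_+})$.
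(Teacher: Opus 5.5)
Your route is the paper's: Theorem~\ref{goal} is combined with Theorem~\ref{theorem: minus-variant2}, the homotopy is $D_a^{\mathrm{cmb}}(m,t,s)$ with $s$ as the homotopy parameter over the base $I\ni t$, and Riesz continuity comes from Proposition~\ref{proposition: Riesz continuity criterion}.

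The step you flag as the main obstacle, however, rests on a misreading of $P$. In $(t,s)$ coordinates the staple is $\{-1\}\times[0,1]\,\cup\,[-1,1]\times\{1\}\,\cup\,\{1\}\times[0,1]$, which is what your own phrase ``up to $s=1$ and across'' says. So Theorem~\ref{goal} does give invertibility on the whole top edge $s=1$, $t\in I$.
\begin{itemize}
\item The top edge is exactly condition (4) of Theorem~\ref{theorem: minus-variant2}. Take $f=\interpolator{a}$, with the lattice family in the $-\alpha$ slot, as in the lower-right block of $D_a^{\mathrm{cmb}}$.
\item The legs $\partial I\times[0,1]$ of $P$ supply the other requirement: the square family must have trivial kernel on $A\times[0,1]$ to be a relative cocycle. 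The theorem as stated does not list this, but Theorem~\ref{goal} provides it.
\item Fredholmness in the interior of the square is automatic, because $\interpolator{a}$ has finite rank (its domain $\Gamma(\lE_a)$ is finite-dimensional).
\end{itemize}
So you can apply Theorem~\ref{theorem: minus-variant2} directly, and the reparametrization onto $P$ is unnecessary. As you phrased it (``$D_a^{\mathrm{cmb}}$ restricted to $P$ \dots\ yields a homotopy''), it is also not a homotopy: restricting to $P$ gives only a one-parameter family. The homotopy has to be the two-parameter family on the full square, linear in $s$, which is precisely the $\widetilde h$ of Theorem~\ref{theorem: minus-variant2}.

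There are two smaller slips.
\begin{itemize}
\item For $K^1$ the natural degree is $(p,q)=(0,-1)$. There are no Clifford generators there, so condition (3) is genuinely vacuous. With $(p,q)=(1,0)$ it is not vacuous. In either case $\gamma$ is not part of the Clifford data, since the mass term commutes with it.
\item At $t=-1$ the square of the lattice operator is $(\naiveDiracoperator)^2+(W+m)^2+\gamma[W,\naiveDiracoperator]$, not an exact analogue of $D^2+m^2$. Invertibility still holds, because $\gamma(\WilsonDiracoperator+m\gamma)=\gamma\naiveDiracoperator+(W+m)$ is a skew-adjoint operator plus one bounded below by $m$. It also follows directly from Theorem~\ref{goal} at $(t,s)=(-1,0)$.
\end{itemize}
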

\begin{proof}
The claim immediately follows from Proposition~\ref{theorem: minus-variant2}.
\end{proof}

As discussed in Section~\ref{eq:FFMOYY}, 
the spectral flow $\mathrm{sf}[D-m\kappa_t\gamma]$ equals to the 
APS index $ \APSindex (\restrictedTo{D}{X_+})$.
Therefore, we regard $\mathrm{sf}[\WilsonDiracoperator-m\lkappa_t\gamma]$
as a lattice formulation of the APS index.

Since $\WilsonDiracoperator-m\lkappa_t\gamma$ is a finite-sized matrix,
the following equality immediately follows 
by using the fact that $\eta(\WilsonDiracoperator+m\gamma)=0$ proved in Proposition~C.1 of \cite{Aoki:2024sjc}. 
\begin{corollary}
 \begin{equation}
  \mathrm{sf}[\WilsonDiracoperator-m\lkappa_t\gamma] = -\frac{1}{2}\eta(\WilsonDiracoperator-m\lkappa\gamma). 
 \end{equation}
\end{corollary}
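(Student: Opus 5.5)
The plan is to exploit the fact that $\hat h_t:=\WilsonDiracoperator-m\lkappa_t\gamma$ acts on the finite-dimensional space $\Gamma(\lE_a)$. For such a family the spectral flow reduces to a count of eigenvalue signs at the two endpoints. The $\eta$-invariant of an invertible Hermitian matrix is also just a signature, so the identity should follow by bookkeeping.

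\emph{Step 1: reduce the spectral flow to endpoint counts.} Let $N:=\dim\Gamma(\lE_a)$ and write $n_\pm(h)$ for the number of positive and negative eigenvalues of an invertible Hermitian matrix $h$. The endpoints are invertible: $\hat h_{-1}=\WilsonDiracoperator+m\gamma$ by the argument of Proposition~C.1 of \cite{Aoki:2024sjc}, and $\hat h_1$ by Theorem~\ref{goal} (or by hypothesis). Because the family is finite dimensional, the operator norms $\|\hat h_t\|$ are bounded uniformly in $t$ by some $L$. In the definition of $\mathrm{sf}$ I will choose the subdivision $-1=t_0<t_1<t_2=t_{n-1}<t_n=1$ with these levels:
\begin{itemize}
\item level $0$ on a short interval $[-1,t_1]$ near $t=-1$;
\item a level $-2L$, below the whole spectrum, on $[t_1,t_{n-1}]$;
\item level $0$ again on a short interval near $t=1$.
\end{itemize}
The short end intervals are available because invertibility is an open condition. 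Only two terms survive, and they are $\pm n_-(\hat h_{t_1})$ and $\mp n_-(\hat h_{t_{n-1}})$. Since $n_-$ is constant on the end intervals, $|\mathrm{sf}[\hat h_t]|$ is, up to an overall sign, $n_-(\hat h_{-1})-n_-(\hat h_1)$.

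\emph{Step 2: express the counts through $\eta$.} For an invertible Hermitian matrix, $\eta(h)=n_+(h)-n_-(h)=N-2n_-(h)$, so $n_-(h)=\tfrac12\bigl(N-\eta(h)\bigr)$. Substituting into Step~1 gives
\begin{equation}
\mathrm{sf}[\hat h_t]=\pm\tfrac12\bigl(\eta(\hat h_1)-\eta(\hat h_{-1})\bigr).
\end{equation}
The dimension $N$ cancels in this difference.

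\emph{Step 3: use the vanishing $\eta$ at $t=-1$.} Proposition~C.1 of \cite{Aoki:2024sjc} gives $\eta(\WilsonDiracoperator+m\gamma)=0$. Since $\lkappa_1=\lkappa$, what remains is $\pm\tfrac12\eta(\WilsonDiracoperator-m\lkappa\gamma)$.

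The only delicate point is the overall sign, that is, which direction of zero-crossing counts as $+1$. A literal application of the $\mathrm{sgn}_k$ formula in the finite-dimensional setting seems to yield $+\tfrac12\eta$. The claimed $-\tfrac12\eta$ is the sign that matches the continuum formula \eqref{eq:FFMOYYformula}, which reads $-\tfrac12(\eta(D-m\kappa\gamma)-\eta(D+m\gamma))$, via Theorem~\ref{thm:FFMOYY}. So I expect the main obstacle to be fixing the orientation convention for $\mathrm{sf}$ consistently, namely the one used in Theorem~\ref{thm:FFMOYY} and Theorem~\ref{goalgoal}. Concretely, I will check on a single eigenvalue that crosses zero downward, from positive to negative, which ought to count $+1$ under that convention. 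With that convention fixed, Steps~1--3 give exactly $-\tfrac12\eta(\WilsonDiracoperator-m\lkappa\gamma)$. Everything else is routine.
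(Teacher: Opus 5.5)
Your proposal is the paper's proof. The paper simply notes that $\WilsonDiracoperator-m\lkappa_t\gamma$ is a finite matrix, so the spectral flow is a difference of endpoint eigenvalue counts (your Steps 1--3), and then uses $\eta(\WilsonDiracoperator+m\gamma)=0$ from Proposition~C.1 of \cite{Aoki:2024sjc} to remove the $t=-1$ term.

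Your sign concern is genuine, but the inconsistency is in the paper, not in your argument. With the $\mathrm{sgn}_k$ formula of Section~\ref{sec:sf}, a downward crossing counts $-1$, so one gets $+\tfrac12\eta(\WilsonDiracoperator-m\lkappa\gamma)$. Theorem~\ref{thm:FFMOYY} together with \eqref{eq:FFMOYYformula}, however, forces downward crossings to count $+1$: take $X_+=X$, where a positive-chirality zero mode of $D$ contributes $+1$ to the index and becomes the eigenvalue $-mt$ of $D-mt\gamma$. So the stated identity holds only under that reversed orientation. You should adopt it explicitly as a convention rather than expect to verify it from the definition.
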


\subsection{Proof of Theorem \ref{goal}}

Suppose that Theorem~\ref{goal}  does not hold.
Then there should exist a series labeled by $i=1,2,\ldots$ composed by
\begin{itemize}
 \item $a_i=1/N_i \to 0$,
 \item $(t_i, s_i) \in P$ 
(the staple-shaped region in Figure~\ref{fig:st}),
 \item $(\psi_{i}, \phi_{i}) \in L^2(E) \oplus L^2(\lE_{a_i})$
\end{itemize}
which satisfy for any $i$ that
\begin{enumerate}
\item
\begin{equation}
 || \psi_{i}||_{L^2}^2 + || \phi_{i}||_{L^2}^2 =1 
\end{equation}
and
\item
\begin{equation}
D_{a_i}^{\rm cmb}(m,t_i,s_i)
\left(
    \begin{array}{c}
      \psi_{i} \\
     \phi_{i} 
    \end{array}
  \right)
=0.\label{eq:contradiction}
\end{equation}
\end{enumerate}

Taking subsequences, we can assume without loss of generality
 that $(t_i,s_i)$ converges to  a point $(t_\infty,s_\infty)\neq (0,0)$:
$t_i \to t_\infty$ and $s_i \to s_\infty$.

Let us decompose ~\eqref{eq:contradiction} into the two equations.
\begin{align}
 (D-m\kappa_{t_i}\gamma) \psi_i  +s_i \interpolator{a_i}\phi_{i} &=0,\\
 s_i  \interpolator{a_i}^* \psi_i -(\WilsonDiracoperator_{a_i}-m\lkappa_{t_i}\gamma)\phi_{i}&=0.
\end{align}

From the first equation we have the following uniform bound,
\begin{align}
  ||\psi_i||^2_{L_1^2} &\le C(||D \psi_i||_{L^2}^2 + ||\psi_i||^2_{L^2}) 
= C(||m\kappa_{t_i}\gamma \psi_i - s_i \interpolator{a_i}\phi_{i} ||^2_{L^2} + ||\psi_i||^2_{L^2})
\\&
\le  C((m^2||\kappa_{t_i}||^2+1) ||\psi_i||^2_{L^2} + s_i^2 ||\interpolator{a_i}\phi_{i} ||^2_{L^2}) \le C'
\end{align}
with positive constants $C$ and $C'$.
For the last inequality we have used the property (\ref{property:iota*iota}) in Section~\ref{sec:properties}.
From the second equation, we have 
\begin{align}
 ||\phi_{i}||^2_{L^2_1}& \le C(||\WilsonDiracoperator_{a_i}\phi_{i}||^2_{L^2}+ ||\phi_{i}||^2_{L^2})
\\&\le
 C(s_i^2 ||\interpolator{a_i}^* 
\psi_i
||^2_{L^2} + (m^2||\lkappa_{a_i}||^2+1) ||\phi_i||^2_{L^2}) <C',
\end{align}
with positive constants $C$ and $C'$. Here we have used the property (\ref{property:iotaiota*}) in Section~\ref{sec:properties}.
Thus, $\psi_i$ and $\interpolator{a_i}\phi_{i}$ are uniformly $L^2_1$ 
bounded
in $L^2(E)$.

Taking subsequences, we can assume without loss of generality
that $\interpolator{a_i}\phi_{i} $ weakly converges in $L^2_1(E)$ to a vector $\psi'_\infty$ 
and $\psi_{i}$ weakly converges in $L^2_1(E)$ to another vector $\psi_\infty$.
Moreover, these are strong convergence in $L^2$ according to the Rellich theorem.
From the property (\ref{property:iota*iota}) in Section~\ref{sec:properties}, 
we can also show that $||\interpolator{a_i}\phi_{i}||^2_{L^2}-||\phi_{i}||^2_{L^2}\le Ca_i||\phi_{i}||^2_{L_1^2}$
with a constant $C$
so that the series $\phi_{i}$ strongly converges to a lattice vector $\phi_\infty$ in $L_2$.
Then we can conclude that $||\psi_\infty||_{L^2}^2+||\phi_\infty||_{L^2}^2=1$.

\begin{lemma}\label{weak convergence of equation}
With the assumptions made above for the proof by contradiction,
the following equation holds.
\begin{equation*}
 \left(
    \begin{array}{cc}
        D-m\kappa_{t_\infty}\gamma & s_\infty \\
       s_\infty   &  -(D-m\kappa_{t_\infty}\gamma)
    \end{array}
  \right)
\left(
    \begin{array}{c}
       \psi_\infty \\
       \psi'_\infty 
    \end{array}
  \right)
=0.
\end{equation*}
\end{lemma}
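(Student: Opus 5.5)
We test the two equations of \eqref{eq:contradiction} against smooth sections and pass to the limit. Fix arbitrary $\chi,\chi'\in\Gamma(E)$. We pair the first equation with $\chi$ in $L^2(E)$. For the second equation, we pair it with the lattice vector $\interpolator{a_i}^*\chi'$ in $L^2(\lE_{a_i})$. The goal is to obtain, in the limit,
\begin{equation}
\langle \chi,(D-m\kappa_{t_\infty}\gamma)\psi_\infty\rangle + s_\infty\langle\chi,\psi'_\infty\rangle=0,\quad
s_\infty\langle\chi',\psi_\infty\rangle-\langle\chi',(D-m\kappa_{t_\infty}\gamma)\psi'_\infty\rangle=0 .
\end{equation}
Since $\chi,\chi'$ are arbitrary smooth sections and $\psi_\infty,\psi'_\infty\in L^2_1(E)$, these identities give the claimed matrix equation.

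For the first equation, we write $\langle\chi,(D-m\kappa_{t_i}\gamma)\psi_i\rangle=\langle(D-m\kappa_{t_i}\gamma)\chi,\psi_i\rangle$. Since $\kappa_{t_i}\to\kappa_{t_\infty}$ in $L^\infty$ by \eqref{eq:kappa_t} and $\psi_i\to\psi_\infty$ strongly in $L^2$, this converges to $\langle\chi,(D-m\kappa_{t_\infty}\gamma)\psi_\infty\rangle$. The coupling term $s_i\langle\chi,\interpolator{a_i}\phi_i\rangle$ converges to $s_\infty\langle\chi,\psi'_\infty\rangle$ by the strong $L^2$ convergence $\interpolator{a_i}\phi_i\to\psi'_\infty$. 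This part is routine.

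For the second equation, the coupling term is $s_i\langle\interpolator{a_i}^*\chi',\interpolator{a_i}^*\psi_i\rangle$. We split it as $\langle\interpolator{a_i}^*\chi',\interpolator{a_i}^*\psi_\infty\rangle$ plus an error controlled by the uniform bound on $\|\interpolator{a}^*\|$ (property (\ref{property:boundness})) times $\|\psi_i-\psi_\infty\|_{L^2}$. Approximating $\psi_\infty$ by smooth sections and using property (\ref{property:iotaiota*}), this tends to $s_\infty\langle\chi',\psi_\infty\rangle$.

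The Wilson term is $\langle\interpolator{a_i}^*\chi',\WilsonDiracoperator_{a_i}\phi_i\rangle=\langle\WilsonDiracoperator_{a_i}\interpolator{a_i}^*\chi',\phi_i\rangle$. We rewrite it as $\langle\interpolator{a_i}\WilsonDiracoperator_{a_i}\interpolator{a_i}^*\chi',\interpolator{a_i}\phi_i\rangle$ plus a correction of the form $\langle(1-\interpolator{a_i}^*\interpolator{a_i})\WilsonDiracoperator_{a_i}\interpolator{a_i}^*\chi',\phi_i\rangle$. The main part converges to $\langle D\chi',\psi'_\infty\rangle$ by property (\ref{property:Dirac}) combined with the strong convergence of $\interpolator{a_i}\phi_i$. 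The correction is $O(a_i^{1/2})$ by property (\ref{property:iota*iota}), applied with its adjoint moved onto $\phi_i$ and using the uniform $L^2_1$ bound on $\phi_i$. The mass term is $m\langle\interpolator{a_i}^*\chi',\lkappa_{t_i}\gamma\phi_i\rangle$. Here we use that $\gamma$ commutes with $\interpolator{a}$, since parallel transport preserves the grading of a Clifford connection. Then Proposition~\ref{prop:DWproperty} lets us replace $\interpolator{a_i}(\lkappa_{t_i}\gamma\phi_i)$ by $\kappa_{t_i}\gamma\interpolator{a_i}\phi_i$ up to $Ca_i^{1/d}\|\phi_i\|_{L^2_1}\to0$. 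Combined with the $\interpolator{}^*\interpolator{}$ correction as before, this converges to $m\langle\chi',\kappa_{t_\infty}\gamma\psi'_\infty\rangle$.

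The main obstacle is the domain-wall mass term: $\lkappa_t$ is discontinuous across $Y$, so $\kappa_t$ does not commute with the interpolator. Proposition~\ref{prop:DWproperty} is exactly what handles this, and it needs the uniform $L^2_1$ bound on $\phi_i$ established above. A secondary point is the correct bookkeeping of adjoints of $\interpolator{a}$, which requires moving the almost-identity $\interpolator{a}^*\interpolator{a}$ onto the $L^2_1$-bounded lattice vectors.
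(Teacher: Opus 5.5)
Your proposal is correct and is essentially the paper's proof. Pairing the lattice equation with $\interpolator{a_i}^*\chi'$ is, by adjointness, the same as applying $\interpolator{a_i}$ and testing against $\chi'$, and you handle the Dirac, coupling and domain-wall mass terms as the paper does, via properties (3)--(5) of Section~\ref{sec:properties} and Proposition~\ref{prop:DWproperty}.

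Two small remarks. First, the correction term also needs the uniform bound $\|\WilsonDiracoperator_{a_i}\interpolator{a_i}^*\chi'\|_{L^2}\le C\|\chi'\|_{L^2_1}$, which the paper states explicitly and which follows from property (2) and the definition of $\WilsonDiracoperator$. Second, no $\interpolator{a_i}^*\interpolator{a_i}$ correction is needed for the mass term, since $\langle\interpolator{a_i}^*\chi',\lkappa_{t_i}\gamma\phi_i\rangle=\langle\chi',\interpolator{a_i}(\lkappa_{t_i}\gamma\phi_i)\rangle$ is already in the form to which Proposition~\ref{prop:DWproperty} applies.
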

\begin{proof}
For any $\Psi \in C^\infty(E)$, 
the weak limit of the inner product $\langle D\psi_i, \Psi\rangle_{L^2}$ becomes
\begin{equation}
 \langle D\psi_i, \Psi\rangle_{L^2} = \langle \psi_i, D^*\Psi\rangle_{L^2}
\to \langle \psi_\infty, D^*\Psi\rangle_{L^2} = \langle D\psi_\infty, \Psi\rangle_{L^2}.
\end{equation}
Similarly, we have
\begin{align}
 \langle m\kappa_{t_i}\gamma \psi_i-s_i(\interpolator{a_i}\phi_{i}), \Psi \rangle_{L^2}
&\to 
\langle m\kappa_{t_\infty}\gamma \psi_\infty-s_\infty\psi'_\infty, \Psi \rangle_{L^2}.
\end{align}
From the upper component of ~\eqref{eq:contradiction}, we have
\begin{equation}
 \langle (D-m\kappa_{t_\infty}\gamma) \psi_\infty  +s_\infty  \psi'_\infty, \Psi\rangle_{L^2} =0.
\end{equation}

By applying $\interpolator{a_i}$ to the lower component of ~\eqref{eq:contradiction}, we obtain an equation
in $L^2(E)$,
\begin{equation}
  \label{eq:latconv}
 -\interpolator{a_i}(\WilsonDiracoperator_{a_i}-m\lkappa_{t_i}\gamma) \phi_{i}  + s_i  \interpolator{a_i}\interpolator{a_i}^* \psi_{i} =0. 
\end{equation}
For $\Psi \in C^\infty(E)$, we have
\begin{equation}
 \langle \interpolator{a_i}(\WilsonDiracoperator_{a_i} \phi_{i}),\Psi\rangle_{L^2} 
 = \langle  \phi_{i}, \WilsonDiracoperator^*_{a_i} (\interpolator{a_i}^*\Psi)\rangle_{L^2}, 
\end{equation}
and 
\begin{align}
  \langle \interpolator{a_i} (\WilsonDiracoperator_{a_i}\interpolator{a_i}^*\interpolator{a_i}) \phi_{i}, \Psi\rangle_{L^2}
&= \langle \interpolator{a_i}^*\interpolator{a_i} \phi_{i}, \WilsonDiracoperator^*_{a_i} (\interpolator{a_i}^*\Psi)\rangle_{L^2}
\\&
= \langle \interpolator{a_i} \phi_{i},\interpolator{a_i} \WilsonDiracoperator^*_{a_i} (\interpolator{a_i}^*\Psi)\rangle_{L^2}
\to
\langle \psi'_\infty, D \Psi\rangle_{L^2},
\end{align}
where we have used the property (\ref{property:Dirac}) in Section~\ref{sec:properties}.
Comparing the above two, we obtain
\begin{align}
 |\langle \interpolator{a_i}(\WilsonDiracoperator_{a_i} \phi_{i}),\Psi\rangle_{L^2} -\langle \interpolator{a_i} (\WilsonDiracoperator_{a_i}\interpolator{a_i}^*\interpolator{a_i}) \phi_{i}, \Psi\rangle_{L^2}|\\
\le ||(\interpolator{a_i}^*\interpolator{a_i}-\id)\phi_{i}||_{L^2}\cdot ||\WilsonDiracoperator_{a_i}^*(\interpolator{a_i}^* \Psi)||_{L^2} \to 0,
\end{align}
where we have used the property (\ref{property:iota*iota}) in Section~\ref{sec:properties}
and that $||\WilsonDiracoperator_{a_i}^*(\interpolator{a_i}^* \Psi)||_{L^2}$ is bounded by 
$C||\Psi||_{L_1^2}$ with some constant $C$.
Therefore, we have for any $\Psi \in C^\infty(E)$, 
\begin{equation}
 \langle \interpolator{a_i}(\WilsonDiracoperator_{a_i} \phi_{i}),\Psi\rangle_{L^2} \to \langle D\psi_\infty', \Psi\rangle_{L^2}.
\end{equation}
Similarly, for any $\Psi \in C^\infty(E)$, the following weak convergence is obtained.
\begin{equation}
 \langle s_i \interpolator{a_i}\interpolator{a_i}^* \psi_i, \Psi\rangle_{L^2} =
 \langle  \psi_i, s_i \interpolator{a_i}\interpolator{a_i}^*\Psi\rangle_{L^2} \to \langle s_\infty \psi_\infty,  \Psi\rangle_{L^2}.
\end{equation}
Finally from the bound below for any $\Psi \in C^\infty(E)$, we have
\begin{align} 
 |\langle m \interpolator{a_i} (\lkappa_{t_i}\gamma \phi_i),\Psi \rangle_{L^2} - \langle m \kappa_{t_i} \gamma \interpolator{a_i}\phi_i,\Psi) \rangle | &\le m ||(\interpolator{a_i}\lkappa_{t_i} -  \kappa_{t_i} \interpolator{a_i})\gamma\phi_i||_{L^2}\cdot ||\Psi||_{L^2}
\\&\le m Ca_i^{1/d}|| \gamma\phi_i ||_{L^2}\cdot ||\Psi||_{L^2},
\end{align}
where Proposition~\ref{prop:DWproperty} is used in the second inequality, and
we have the following weak convergence of the series
\begin{align}
 \langle m \interpolator{a_i} (\lkappa_{t_i}\gamma \phi_i),\Psi \rangle_{L^2}  
\to \langle m \kappa_{t_\infty} \gamma \psi'_\infty, \Psi\rangle_{L^2}  
\end{align}
and 
\begin{equation}
 \langle s_\infty \psi_\infty -(D-m\kappa_{t_\infty}\gamma) \psi'_\infty, \Psi \rangle_{L^2}=0,
\end{equation}
holds.
\end{proof}
When Lemma~\ref{weak convergence of equation} holds, 
by applying the square of the operator, we have
\begin{align}
 & \left(
    \begin{array}{cc}
        D-m\kappa_{t_\infty}\gamma & s_\infty \\
       s_\infty   &  -(D-m\kappa_{t_\infty}\gamma)
    \end{array}
  \right)^2 \left(
    \begin{array}{c}
       \psi_\infty \\
       \psi'_\infty 
    \end{array}
  \right)  \\ &=
\left(
    \begin{array}{cc}
        (D-m\kappa_{t_\infty}\gamma)^2 + s_\infty^2 & 0 \\
        0   &  (D-m\kappa_{t_\infty}\gamma)^2+s_\infty^2
    \end{array}
  \right) \left(
    \begin{array}{c}
       \psi_\infty \\
       \psi'_\infty 
    \end{array}
  \right) \\ &= 0.
\end{align} 
Since $(D-m\kappa_{\pm 1}\gamma)^2$ is invertible
from the assumption, $(D-m\kappa_{t_\infty}\gamma)^2 + s_\infty^2$
is everywhere invertible on the staple-shaped region $(t_\infty, s_\infty) \in P$.
Therefore, $\psi_\infty$ and $\psi'_\infty$ must be zero,
which contradicts with the condition $||\psi_\infty||_{L^2}^2+||\phi_\infty||_{L^2}^2=1$. 
This proves Theorem~\ref{goal}.
Theorem~\ref{goalgoal} immediately follows from Theorem~\ref{goal}.
 \qed

\section{Applications to the mod-two APS index} 
\label{sec:mod-2APS}

So far, we have discussed the standard $\mathbb{Z}$-valued index
of Dirac operators in even dimensions and
the corresponding spectral flow of the massive Dirac operators,
which 
give elements of $K^1(I,\partial I)$.
Since our formulation of 
$K^n$ or $KO^n$ 
for arbitrary degree $n$ 
in Section~\ref{sec:K} is so general that
we can deal with unbounded continuum Dirac operators and bounded lattice Dirac operators
with the corresponding symmetries.
In particular, it is important in physics to formulate
the $\mathbb{Z}_2$-valued index of real skewsymmetric
Dirac operators in order to describe non-local types of anomaly \cite{Witten:1982fp}.
In the original definition, the $\mathbb{Z}_2$-valued index
gives an element of the group $KO^{-1}(\mathrm{pt})$,
where the degree $n=-1$ indicates absence of the $\mathbb{Z}_2$-grading or chirality operator.
The corresponding mod-two version of the 
spectral flow 
gives an element of the group $KO^0(I,\partial I)$
\footnote{
  A similar isomorphism $KO^{-2}(\mathrm{pt})\cong KO^{-1}(I,\partial I)\cong \mathbb{Z}_2$ also works.
}.
In this section, we formulate a lattice version
of the mod-two APS index using the mod-two spectral flow.

Let $X := \T^d = (\R/\Z)^d$ be a $d$-dimensional flat torus,
where $d$ can be any positive integer.
In this section, it is convenient to introduce the basis 
$\epsilon_1, \dots, \epsilon_d$
of the Clifford algebra  $\cliffordalgebra_d$, 
satisfying the anticommutation relation
\begin{equation}
  \{\epsilon_i, \epsilon_j\}=\epsilon_i \epsilon_j + \epsilon_j \epsilon_i = 2 \delta_{ij}.
\end{equation}
Let $E\to X$ be a Clifford module bundle on $X$ with 
a smooth bundle map $\cliffordmultiplication \colon X \times \cliffordalgebra_d \to \End(E)$.
We assume that $E$ is of the form $E^\mathbb{R}\otimes_\mathbb{R} \mathbb{C}$ for an Euclidean vector bundle $E^\mathbb{R}$ and $\sigma(\epsilon_i )$ is a real symmetric operator on $E$, {\it i.e.}, 
a symmetric operator on $E^\mathbb{R}$  for all $i$.


Let $Y \subset X$ be a separating submanifold that decomposes $X$ into the union of two compact manifolds $X_+$ and $X_-$ which share the common boundary $Y$.
We introduce the same one-parameter family of the
domain-wall function $\kappa_t$ as in ~\eqref{eq:kappa_t},
where $t$ is in the range $I=[-1,1]$ and the two end points are denoted by $\partial I$.

Fix a Clifford connection $A$ on $E$ preserving $E^\mathbb{R}$,
which determines the covariant 
derivative $\nabla_j$ in the $\bm{e}_j$ direction for each $j=1,\cdots d$,
and the parallel transport as in Section~\ref{sec:Dirac-operators}.
We denote the space of smooth sections of $E$ by $\Gamma(E)$.
Let $D:\Gamma (E)\to \Gamma (E)$ be a real and skewsymmetric
first order elliptic operator defined by
\begin{equation}
  D u:= \sum_{j=1}^d \cliffordmultiplication(\epsilon_j) \nabla_{j} u,
\end{equation}
for $u \in \Gamma (E)$.

A distinct property of the fermion system in odd dimensions
is that the massive Dirac operators $D - m\kappa_t \id$,
with a positive parameter $m>0$,
is neither selfadjoint nor skewadjoint\footnote{Recall that in even dimensions, we can use the $\mathbb{Z}_2$-grading operator $\gamma$
to make $\gamma (D-m\kappa_t\id)$ selfadjoint.} in general.
In order to 
represent the $KO^0(I,\partial I)$ elements,
we define the following one-parameter family of the
real symmetric operators $\{H_t\}_{t\in I}$ on $\Gamma(E)\oplus \Gamma(E)$ where
\begin{equation}
  H_t : 
  = \begin{pmatrix}0 & D-m\kappa_t\id \\ D^*-m\kappa_t\id  & 0\end{pmatrix},
\end{equation}  
with an assumption that $H_{t=1}$ is invertible ($H_{t=-1}$ is also trivially invertible).
This operator is Riesz-continuous 
with respect to $t$.

When $X_-=\{\emptyset\}$ and  $\kappa_t=t\id$, it is not difficult to
confirm that the mod-two spectral flow: $\mathrm{sf}_2[\{H_t\}_{t\in I}]$, 
which is the number of
zero-crossing pairs of the eigenvalues of $H_t$ along $t\in [-1,1]$,
agrees with the dimension of
$\mathrm{Ker} D$ modulo two.
In \cite{Fukaya:2020tjk}, it was proved that
the mod-two spectral flow of $\{H_t\}_{t\in I}$ equals to
the mod-two APS index of $D$ on $X_+$.

Now let us define the lattice Wilson Dirac operator
$\WilsonDiracoperator \colon \Gamma(\lE) \to \Gamma(\lE)$
using the same notation as in Section~\ref{sec:Wilson},
\begin{equation}
  \WilsonDiracoperator :=
  \sum_{i=1}^{d} \cliffordmultiplication(\epsilon_j) \differenceoperator_j
  +\frac{a}{2} \sum_{j=1}^d \forwarddifferenceoperator_j \big(\forwarddifferenceoperator_j\big)^*,
\end{equation}
which is real.
Note here that the first term is skewadjoint but the
second Wilson term is selfadjoint.
We also define the lattice version of the real symmetric operator
$\lattice{H}_t : \Gamma(\lE_a) \oplus \Gamma(\lE_a) \to \Gamma(\lE_a) \oplus \Gamma(\lE_a)$ by
\begin{equation}
\lattice{H}_t :=  
\begin{pmatrix}0 & \WilsonDiracoperator-m\lkappa_t\id \\ \WilsonDiracoperator^*-m\lkappa_t\id  & 0\end{pmatrix}.
\end{equation}

Let us define a continuum-lattice combined domain-wall fermion Dirac operator 
$H_a^{\rm cmb}(m,t,s): \Gamma(E)\oplus \Gamma(E) \oplus \Gamma(\lE_a) \oplus \Gamma(\lE_a)\to  \Gamma(E)\oplus \Gamma(E) \oplus \Gamma(\lE_a) \oplus \Gamma(\lE_a)$ by
\begin{align}
 H_a^{\rm cmb}(m,t,s) &:=\left(
    \begin{array}{cc}
       H_t  & \id_{2\times 2} \otimes s\interpolator{a}\\
       \id_{2\times 2} \otimes s\interpolator{a}^*   &   -\lattice{H}_t
    \end{array}
  \right)
\end{align}
with an additional parameter $s\in [0,1]$.

\begin{theorem}\label{goal-mod2}
On the path $P$ in Figure~\ref{fig:st},
there exists a constant $a_1$ and $m_1$ such that for arbitrary 
lattice spacing $a=1/N$ satisfying $0< a \le a_1$, and arbitrary mass parameter $m>m_1$
$H_a^{\rm cmb}(m,t,s)$ is invertible at any point $(s,t)$ on $P$.
\end{theorem}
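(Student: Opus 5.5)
The plan is to mirror the proof of Theorem~\ref{goal}, arguing by contradiction and compactness. Suppose the claim fails. Then there are sequences $a_i=1/N_i\to 0$, $(t_i,s_i)\in P$ converging to some $(t_\infty,s_\infty)$, and unit vectors $(\Psi_i,\Phi_i)\in (L^2(E)\oplus L^2(E))\oplus(\Gamma(\lE_{a_i})\oplus\Gamma(\lE_{a_i}))$ in the kernel of $H^{\rm cmb}_{a_i}(m,t_i,s_i)$. Write $\Psi_i=(\psi_i^1,\psi_i^2)$ and $\Phi_i=(\phi_i^1,\phi_i^2)$. The kernel equation splits into four component equations:
\begin{align}
(D-m\kappa_{t_i})\psi_i^2+s_i\interpolator{a_i}\phi_i^1&=0, & (D^*-m\kappa_{t_i})\psi_i^1+s_i\interpolator{a_i}\phi_i^2&=0,\\
s_i\interpolator{a_i}^*\psi_i^1-(\WilsonDiracoperator_{a_i}-m\lkappa_{t_i})\phi_i^2&=0, & s_i\interpolator{a_i}^*\psi_i^2-(\WilsonDiracoperator_{a_i}^*-m\lkappa_{t_i})\phi_i^1&=0.
\end{align}

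\textbf{Uniform $L^2_1$ bounds.} The continuum components satisfy the elliptic estimate for $D$ and $D^*$, with the mass and $\interpolator{a}$ terms bounded in $L^2$ by properties (\ref{property:boundness}) in Section~\ref{sec:properties}. The lattice components need the analogue of the Wilson estimate \cite{Aoki:2024sjc}[Theorem 4.7] for both $\WilsonDiracoperator$ and $\WilsonDiracoperator^*$ in this real odd-dimensional setting. I expect this to be the main technical point. The operator now has a skewadjoint naive part plus a selfadjoint Wilson term, with no $\gamma$. I would try to reprove the estimate by expanding
\begin{equation}
\|\WilsonDiracoperator\phi\|^2=\|\naiveDiracoperator\phi\|^2+\|\Wilsonterm\phi\|^2+2\,\mathrm{Re}\langle\naiveDiracoperator\phi,\Wilsonterm\phi\rangle.
\end{equation}
In the even case the corresponding cross term is a commutator-type expression. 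Here the cross term becomes $\langle\phi,[\naiveDiracoperator,\Wilsonterm]\phi\rangle$ up to sign, because $\naiveDiracoperator$ is skew and $\Wilsonterm$ is selfadjoint. That commutator is controlled by the curvature of the link variables, exactly as in \cite{Aoki:2024sjc}, so the bound on $\sum_j\|\forwarddifferenceoperator_j\phi\|^2$ should go through. Alternatively, one can reduce to the even case by doubling: the operator $\lattice{H}_t$ is itself a Wilson-type operator with grading $\mathrm{diag}(1,-1)$.

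\textbf{Limits.} With the bounds in hand, pass to subsequences. Then $\psi_i^k\to\psi_\infty^k$ and $\interpolator{a_i}\phi_i^k\to\psi'^k_\infty$, weakly in $L^2_1$ and strongly in $L^2$ by Rellich. Property (\ref{property:iota*iota}) gives $\|\Psi_\infty\|^2+\|\Psi'_\infty\|^2=1$. Next, repeat Lemma~\ref{weak convergence of equation} componentwise. This uses property (\ref{property:Dirac}) for $\WilsonDiracoperator$ and its adjoint, whose proof in \cite{Aoki:2024sjc} does not depend on selfadjointness, and Proposition~\ref{prop:DWproperty} for the mass term, with $\gamma$ replaced by $\id$. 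The result is
\begin{equation}
\begin{pmatrix} H_{t_\infty} & s_\infty \\ s_\infty & -H_{t_\infty}\end{pmatrix}\begin{pmatrix}\Psi_\infty\\ \Psi'_\infty\end{pmatrix}=0 .
\end{equation}

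\textbf{Conclusion.} Squaring this operator gives $\mathrm{diag}(H_{t_\infty}^2+s_\infty^2,\,H_{t_\infty}^2+s_\infty^2)$, which is invertible at every point of $P$. For $s_\infty\ne 0$ this is clear. At $s_\infty=0$ we have $t_\infty=\pm1$, and there $H_{\pm1}$ is invertible by assumption; in particular $H_{-1}^2=\mathrm{diag}(DD^*+m^2,\,D^*D+m^2)$. So the limit vector vanishes, contradicting its unit norm. The constant $m_1$ enters, as in Theorem~\ref{goal}, only through the assumed invertibility of $H_1$ for large $m$.
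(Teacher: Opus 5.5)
Your proposal is correct and follows the paper's own route: the paper proves Theorem~\ref{goal-mod2} only by saying that the contradiction--compactness argument of Theorem~\ref{goal} carries over, and you carry that argument out componentwise. You also supply the real-case Wilson estimate, which the paper leaves implicit; there the cross term is again a commutator of the naive and Wilson parts, bounded uniformly in $a$ by the plaquette curvature. Two minor remarks: as in the paper, $m$ is held fixed along your sequence, so the $a_1$ you obtain depends on $m$; and in your doubling alternative the operator playing the role of $\gamma$ is $\bigl(\begin{smallmatrix}0&1\\1&0\end{smallmatrix}\bigr)\otimes\id$ rather than $\mathrm{diag}(1,-1)$, and with it (and $e_j\mapsto\bigl(\begin{smallmatrix}0&1\\-1&0\end{smallmatrix}\bigr)\otimes\cliffordmultiplication(\epsilon_j)$) the operators $H_t$, $\lattice{H}_t$ and $H_a^{\rm cmb}$ become literally those of Theorem~\ref{goal} for the graded Clifford module $\C^2\otimes E$.
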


\begin{corollary}
The mod-two spectral flow 
$\mathrm{sf}_2[\{\lattice{H}_t\}_{t\in I}]$ is
well-defined as an element of $KO^0(I,\partial I)$.
\end{corollary}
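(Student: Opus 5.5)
The plan is to read off everything from Theorem~\ref{goal-mod2}, evaluated at the two ``feet'' of the staple-shaped path $P$, and then to place the lattice family inside the real version of the $K$-theoretic framework of Section~\ref{sec:K}. First, I would fix $0<a\le a_1$ and $m>m_1$ as in Theorem~\ref{goal-mod2}. The path $P$ contains the two points with $s=0$ and $t=\pm1$, where the off-diagonal interpolator blocks vanish and
\begin{equation}
H_a^{\rm cmb}(m,\pm1,0)=H_{\pm1}\oplus\bigl(-\lattice{H}_{\pm1}\bigr).
\end{equation}
Invertibility of this direct sum forces $\lattice{H}_{\pm1}$ to be invertible. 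In particular $\Ker\lattice{H}_t=\{0\}$ for $t\in\partial I$. This is the only nontrivial input. Note that invertibility at $t=-1$ is not automatic on the lattice, since $\WilsonDiracoperator$ is not skewadjoint; it is supplied by the theorem.

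Next, I would verify that $(\mathcal{H},c,\{\lattice{H}_t\}_{t\in I})$ is a $KO$-cocycle of degree $0$ on $(I,\partial I)$. Here $\mathcal{H}$ is the trivial real bundle with fibre $\Gamma(\lE^{\R}_a)\oplus\Gamma(\lE^{\R}_a)$, and $c$ is the real Clifford action given by the grading $\mathrm{diag}(1,-1)$. The verification has three parts. The fibre is finite dimensional, so every $\lattice{H}_t$ is bounded and trivially Fredholm. The family is real symmetric and odd with respect to $\mathrm{diag}(1,-1)$, because $\WilsonDiracoperator$ and $\lkappa_t$ are real and the operator has off-diagonal block form. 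Finally, $t\mapsto\lattice{H}_t$ is norm continuous (indeed affine in $t$ by \eqref{eq:kappa_t}). Together with the previous step this gives a class in $KO^{p,q}_{\bounded}(I,\partial I)$ with $p-q=0$, and hence an element of $KO^0(I,\partial I)$, using the real analogue of Theorem~\ref{theorem: isomorphism between bounded and Riesz}. Once this class exists, its image under the isomorphism $KO^0(I,\partial I)\cong\Z_2$ is computed by the finite-dimensional formula $\mathrm{sf}_2$ of Section~\ref{sec:sf}. The matrix $V$ with $\lattice{H}_1=V^*\lattice{H}_{-1}V$ exists and $\det V\neq0$ precisely because both endpoint operators are invertible.

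To make the statement meaningful and not merely formal, I would also record the companion fact $[\{\lattice{H}_t\}]=[\{H_t\}]$. This follows from the real version of Theorem~\ref{theorem: minus-variant2} with $f=\id_{2\times2}\otimes\interpolator{a}^*$ (up to the sign convention $-c$). Invertibility of $H^{\rm cmb}_a$ along the top edge $s=1$ of $P$ is exactly the kernel condition of that theorem. The commutation with the Clifford action holds because $\interpolator{a}$ is real, which uses the assumption that $A$ preserves $E^{\R}$, and because $\id_{2\times2}\otimes\interpolator{a}$ commutes with $\mathrm{diag}(1,-1)$. Riesz continuity of $H_t$ follows from Proposition~\ref{proposition: Riesz continuity criterion}, since $H_t-H_{t_0}$ is a bounded multiplication operator depending continuously on $t$.

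The main obstacle is bookkeeping rather than analysis. One must check that the real, graded setting of Section~\ref{sec:K} (which the paper says carries over ``straightforwardly'') applies, with the Clifford generators correctly assigned so that the degree is $0$ and not $-1$ or $1$. For this I would exhibit the Clifford action explicitly: the grading is $\epsilon_0\mapsto\mathrm{diag}(1,-1)$ with $q=0$, $p=0$, and I would check anticommutation with $\lattice{H}_t$ directly.
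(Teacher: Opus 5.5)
Your proof is correct and follows the paper's (implicit) argument: at the feet $(t,s)=(\pm1,0)$ of $P$, $H_a^{\rm cmb}$ splits as $H_{\pm1}\oplus(-\lattice{H}_{\pm1})$, so Theorem~\ref{goal-mod2} gives $\Ker\lattice{H}_{\pm1}=\{0\}$ and the real, odd, norm-continuous, finite-dimensional family is a degree-zero $KO$-cocycle on $(I,\partial I)$; your extra paragraph on $[\{\lattice{H}_t\}]=[\{H_t\}]$ is Theorem~\ref{goalgoal-mod2}, not part of this corollary. One side remark is wrong, though you never use it: invertibility of $\lattice{H}_{-1}$ is automatic, because the first term of $\WilsonDiracoperator$ is skewadjoint and $W\ge0$, so $\WilsonDiracoperator+\WilsonDiracoperator^*=2W$ and $(\WilsonDiracoperator^*+m)(\WilsonDiracoperator+m)=\WilsonDiracoperator^*\WilsonDiracoperator+2mW+m^2\ge m^2$.
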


\begin{theorem}[Mod-two APS Index of lattice Dirac operator]
  \label{goalgoal-mod2}
For any lattice spacing $a=1/N<a_1$, the following holds.
\[
\mathrm{sf}_2[\{H_t\}_{t\in I}] =
\mathrm{sf}_2[\{\lattice{H}_t\}_{t\in I}] \in KO^0(I,\partial I).
\]
\end{theorem}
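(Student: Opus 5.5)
The plan is to mirror the proof of Theorem~\ref{goalgoal}, replacing complex $K$-theory by $KO$-theory. The target is to apply Theorem~\ref{theorem: minus-variant2} in its real version. We take the parameter space $I$ with $A=\partial I$ and the degree $(p,q)$ for which real symmetric operators $H_t$, odd with respect to the $\mathbb{Z}_2$-grading $\Gamma\otimes\id$ of $\Gamma(E)\oplus\Gamma(E)$, define $KO^0$-cocycles. The two cocycles are $\alpha'=(\Gamma(E)^{\oplus 2},c,\{H_t\})$ (continuum) and $\alpha=(\Gamma(\lE_a)^{\oplus 2},\hat c,\{\lattice{H}_t\})$ (lattice). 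The bridging family is $f_t=\id_{2\times2}\otimes\interpolator{a}$, with adjoint $\id_{2\times2}\otimes\interpolator{a}^*$.

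First I would verify the structural hypotheses. $\interpolator{a}$ is real, since $\rho_a$ is real and the parallel transports preserve $E^{\R}$. Hence $f_t$ is a real bounded operator. Because it is diagonal in the $2\times 2$ block, it commutes with the grading and intertwines the Clifford actions. Riesz continuity in $t$ comes from Proposition~\ref{proposition: Riesz continuity criterion}. The reason is that $H_t-H_{t_0}$ is the bounded off-diagonal multiplication operator $-m(\kappa_t-\kappa_{t_0})$, which is norm-continuous in $t$ by \eqref{eq:kappa_t}. On the lattice side everything is finite-dimensional. The Fredholm property and the invertibility at $t=\pm1$ are assumed, or follow from $H_{-1}^2=\mathrm{diag}((D^*+m)(D+m),(D+m)(D^*+m))$ together with the fact that $D^*=-D$ is skew.

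Hypothesis (4) is the invertibility of $H_a^{\rm cmb}(m,t,1)$ for all $t\in I$, which is exactly the top edge of the staple $P$ in Theorem~\ref{goal-mod2}. Once Theorem~\ref{goal-mod2} is available, Theorem~\ref{theorem: minus-variant2} gives $[\alpha']=[\alpha]$ in $KO^0(I,\partial I)$. The two legs of $P$ at $t=\pm1$ ensure that the homotopy in $s$ stays within cocycles that are invertible on $\partial I$. Finally, $\mathrm{sf}_2$ is the isomorphism $KO^0(I,\partial I)\cong\mathbb{Z}_2$, and its finite-dimensional formula computes it on the lattice side, so equality of classes gives equality of mod-two spectral flows.

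The main obstacle is therefore Theorem~\ref{goal-mod2}, whose proof I would copy from that of Theorem~\ref{goal}. Suppose it fails. Then there are normalized kernel vectors $(\psi_i,\phi_i)$, each now a pair of components, with $a_i\to0$ and $(t_i,s_i)\to(t_\infty,s_\infty)$. The two block equations give uniform $L^2_1$ bounds. On the continuum side these come from ellipticity of $D$. On the lattice side they come from the Wilson estimate $\sum_j\|\forwarddifferenceoperator_j\phi\|^2\le 2\|\WilsonDiracoperator\phi\|^2+C\|\phi\|^2$, applied to both $\WilsonDiracoperator$ and $\WilsonDiracoperator^*$; this should hold because $\WilsonDiracoperator^*\WilsonDiracoperator$ and $\WilsonDiracoperator\WilsonDiracoperator^*$ differ only in the sign of the cross term. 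The point to check most carefully is that this estimate from \cite{Aoki:2024sjc} still holds with the non-graded Wilson term and with the lattice mass term absorbed into $C\|\phi\|^2$.

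Rellich compactness and property~(\ref{property:iota*iota}) then give strong $L^2$ limits $\psi_\infty$ and $\psi'_\infty=\lim\interpolator{a_i}\phi_i$, with total norm $1$. The weak-limit argument of Lemma~\ref{weak convergence of equation} goes through, using properties~(\ref{property:iotaiota*}) and (\ref{property:Dirac}) (the latter applied to both $\WilsonDiracoperator$ and its adjoint) and Proposition~\ref{prop:DWproperty} for the mass term. It yields
\[
\begin{pmatrix} H_{t_\infty} & s_\infty\\ s_\infty & -H_{t_\infty}\end{pmatrix}\begin{pmatrix}\psi_\infty\\ \psi'_\infty\end{pmatrix}=0 .
\]
Squaring gives $(H_{t_\infty}^2+s_\infty^2)\psi_\infty=0$, which forces zero on $P$ because $H_{\pm1}$ is invertible. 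This contradicts the normalization and completes the argument.
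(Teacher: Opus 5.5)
Your proposal is correct and follows the paper's route. The paper proves this theorem only by saying it goes as for Theorems~\ref{goal} and \ref{goalgoal}: first show that $H_a^{\rm cmb}(m,t,s)$ is invertible on the staple $P$ by the same compactness-contradiction argument, then apply the real version of Theorem~\ref{theorem: minus-variant2} with $f=\id_{2\times2}\otimes\interpolator{a}$. You supply the details the paper leaves implicit, and they check out: reality of $\interpolator{a}$ and its commutation with the grading, the role of the legs of $P$ for invertibility over $\partial I$, and the Wilson-type estimate for both $\WilsonDiracoperator$ and its adjoint in the ungraded setting.
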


\begin{proof}
The proof goes in the essentially same way as that of Theorems~\ref{goal} and \ref{goalgoal}.
\end{proof}

When $Y$ has a collar neighbourhood as in Section~\ref{eq:FFMOYY}, 
the spectral flow $\mathrm{sf}_2[\{H_t\}_{t\in I}]$ equals to the 
mod-two APS index $ \APSindex^\mathrm{mod\textrm{-}2} (\restrictedTo{D}{X_+})$,
which was proved in \cite{Fukaya:2020tjk}.
Therefore,  $\mathrm{sf}_2[\{\lattice{H}_t\}_{t\in I}]$
can be regarded as its lattice formulation.

\section*{Acknowledgments}

The authors thank Yoshio Kikukawa and Yosuke Kubota
for helpful discussions.
This work was supported in part by JSPS KAKENHI Grant
(JP21K03222, JP21K03574, JP23K03387, JP23K22490, JP24K06719, JP25K07283, JP25K1738).
The work of SA was supported in part by RIKEN Special Postdoctoral Researchers Program.

\bibliographystyle{alpha}
\bibliography{references_bib.bib}
\end{document}